\newcommand{\Q}{\mathbb Q}
\newcommand{\N}{\mathbb N}
\newcommand{\Z}{\mathbb Z}
\renewcommand{\P}{\mathfrak P}
\newcommand{\qq}{\mathfrak Q}
\newcommand{\p}{\mathfrak p}
\newcommand{\q}{\mathfrak q}
\newcommand{\gal}{\mathrm{Gal}}
\renewcommand{\epsilon}{\varepsilon}
\newcommand{\st}{\mathrm{st}}
\newcommand{\rt}{\mathrm{R}_t}
\newcommand{\cl}{\mathrm{Cl}}
\newcommand{\aut}{\mathrm{Aut}}
\newcommand{\ind}{\mathrm{ind}}
\newcommand{\res}{\mathrm{res}}
\newcommand{\G}{\mathcal G}
\newcommand{\oo}{\mathcal O}
\newcommand{\W}{\mathcal W}
\newtheorem{theorem}{Theorem}[section]
\newtheorem{lemma}[theorem]{Lemma}
\newtheorem{proposition}[theorem]{Proposition}
\theoremstyle{remark}
\newtheorem{remark}[theorem]{Remark}
\newtheorem{example}[theorem]{Example}
\newtheorem{conjecture}[theorem]{Conjecture}
\newtheorem{question}[theorem]{Question}
\theoremstyle{definition}
\newtheorem{definition}[theorem]{Definition}
\title{An explicit candidate for the set of Steinitz classes of tame Galois extensions with fixed Galois group of odd order}
\author{Luca Caputo and Alessandro Cobbe}
\begin{document}
\maketitle
\noindent\textsc{Abstract -}\begin{small}{Given a finite group $G$ and a number field $k$, a well-known conjecture asserts that the set $\rt(k,G)$ of Steinitz classes of tame $G$-Galois extensions of $k$ is a subgroup of the ideal class group of $k$. In this paper we investigate an explicit candidate for $\rt(k,G)$, when $G$ is of \emph{odd} order. More precisely, we define a subgroup $\mathcal{W}(k,G)$ of the class group of $k$ and we prove that $\rt(k,G)\subseteq \mathcal{W}(k,G)$. We show that equality holds for all groups of odd order for which a description of $\rt(k,G)$ is known so far. Furthermore, by refining techniques introduced in \cite{Cobbe1}, we use the Shafarevich-Weil Theorem in cohomological class field theory, to construct some tame Galois extensions with given Steinitz class. In particular, this allows us to prove the equality $\rt(k,G)=\mathcal{W}(k,G)$ when $G$ is a group of order dividing $\ell^4$, where $\ell$ is an odd prime.}
\end{small}
\begin{section}{Introduction}
Let $K/k$ be an extension of number fields and let $\oo_K$ and $\oo_k$ be the rings of integers of $K$ and $k$ respectively. Then
\[\oo_K\cong \oo_k^{[K:k]-1}\oplus I\]
as $\oo_k$-modules, where $I$ is an ideal in $\oo_k$ which is determined by the extension $K/k$ up to principal ideals. The ideal class of $I$ is called the Steinitz class $\st(K/k)$ of the extension.

For a number field $k$, denote by $\cl(k)$ the class group of $k$. If $G$ is a finite group, one can consider the set of all Steinitz classes which arise from tame $G$-Galois extensions of $k$:
\[\rt(k,G)=\{x\in\cl(k):\ \exists\, K/k\text{ tame Galois, }\gal(K/k)\cong G, \st(K/k)=x\}.\]
There is no known description of $\rt(k,G)$ in general, but in all cases for which it is known it turns out to be a subgroup of the ideal class group of $k$. This leads to formulate the following conjecture.

\begin{conjecture}\label{conj1}
For any number field $k$ and any finite group $G$, $\rt(k,G)$ is a subgroup of $\cl(k)$.
\end{conjecture}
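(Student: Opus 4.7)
The plan is to reduce the subgroup assertion to a realisability statement via an explicit candidate subgroup of $\cl(k)$, in the spirit of the $\W(k,G)$ introduced in this paper. First I would invoke the classical tame Steinitz class formula: for any tame $G$-Galois extension $K/k$, $\st(K/k)$ factors as a product over the primes $\p$ of $k$ ramified in $K$ of explicit local contributions $s_\p\in\cl(k)$ depending only on the inertia and decomposition data at $\p$ and on their embedding into $G$. Taking the subgroup of $\cl(k)$ generated by all such admissible $s_\p$ yields a candidate of the type of $\W(k,G)$: a subgroup of $\cl(k)$ by construction, which automatically contains $\rt(k,G)$. The conjecture would then reduce to the reverse inclusion $\W(k,G)\subseteq\rt(k,G)$.

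To establish the reverse inclusion, I would first realise each single generator $s_\p$ as the Steinitz class of a tame $G$-extension ramified essentially only at $\p$, by formulating the problem as an embedding problem over $k$ and invoking the Shafarevich--Weil theorem in the refined form of \cite{Cobbe1}, which supplies a global solution once the relevant $H^2$-obstruction vanishes. Arbitrary products $\prod_i s_{\p_i}$ would then be obtained by iteration: starting from an auxiliary tame $G$-extension, I would perturb it successively at primes $\p_1,\p_2,\dots$ chosen via Chebotarev density to split completely in every field produced so far, so that the local contributions multiply cleanly in $\cl(k)$ and no unwanted ramification is introduced. Closure under inverses would follow from the same construction applied to the class $s_\p^{-1}$, which lies in the same admissible set since the local formula is symmetric under inversion once one allows all compatible choices of inertia embedding.

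The hard part is the systematic vanishing of the cohomological obstructions governing the embedding problems above. When $G$ is solvable with small Sylow subgroups one can descend along a composition series and attack each step directly, which is what underlies the paper's unconditional result for groups of order dividing $\ell^4$; for general, and already non-solvable, $G$ there is at present no uniform method to solve such embedding problems with prescribed local ramification, and any attack on the full conjecture along these lines would require genuinely new techniques. A secondary difficulty, relevant already for the non-emptiness of $\rt(k,G)$, is that a tame analogue of the inverse Galois problem is itself open in general, so that one cannot always exhibit a single tame $G$-extension of $k$ to serve as the base point of the iteration.
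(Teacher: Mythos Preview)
The statement you are attempting to prove is Conjecture~\ref{conj1}, which the paper explicitly leaves open; there is no proof of it in the paper to compare against. What the paper does is precisely the programme you sketch: it defines the candidate subgroup $\W(k,G)$ (Definition~\ref{WkG}), proves the containment $\rt(k,G)\subseteq\W(k,G)$ in Theorem~\ref{secondinclusion}, and then establishes the reverse inclusion only for restricted classes of odd-order groups ($A'$-groups, groups of order dividing $\ell^4$) using the Shafarevich--Weil machinery you mention. So your outline is not a new route but a faithful summary of the paper's own partial strategy, and you correctly flag that the embedding-problem step has no general solution.

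There is, however, a more concrete obstruction to your plan than ``the cohomology is hard'': the reverse inclusion $\W(k,G)\subseteq\rt(k,G)$ is simply \emph{false} in general, so the strategy cannot succeed as stated. The paper exhibits this in Example~2.13: for $k=\Q(i,\sqrt{10})$ and $G=C(8)$ one has $\rt(k,G)=W(k,8)=\{1\}$ while $\W(k,G)=W(k,8)^{1/2}=\cl(k)$, which has order~$2$. Thus the subgroup generated by the admissible local contributions $s_\p$ can strictly contain $\rt(k,G)$, and proving that $\rt(k,G)$ is a group cannot be reduced to identifying it with this particular candidate. Any proof of the full conjecture for even-order $G$ would need either a sharper candidate than $\W(k,G)$ or an argument that does not pass through such an identification at all; your proposal does not address this, and neither does the paper.
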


The conjecture is known to hold for abelian groups as a consequence of the results of the paper \cite{McCulloh_Crelle} by Leon McCulloh. An explicit description of $\rt(k,G)$ for abelian groups of odd order was first found by Lawrence Paul Endo in his unpublished PhD thesis \cite{Endo}, in which he also obtained some explicit results in the even case.

Concerning nonabelian groups, there are a lot of results for groups of some particular shape, however sometimes with restrictions on the base field $k$. A list of recent results includes \cite{Bruche}, \cite{BrucheSodaigui}, \cite{ByottGreitherSodaigui}, \cite{Carter}, \cite{CarterSodaigui_quaternionigeneralizzati}, \cite{Cobbe2}, \cite{Cobbe1}, \cite{Cobbe3}, \cite{GodinSodaigui_A4}, \cite{GodinSodaigui_ottaedri}, \cite{Massy},  \cite{SbeitySodaigui}, \cite{Sodaigui1}, \cite{Sodaigui2} and \cite{Soverchia}.

The aim of this paper is to develop some new ideas to improve the results of \cite{Cobbe1} and \cite{Cobbe3}. In Section \ref{second} we generalize \cite[Lemma 3.14]{Cobbe1}, obtaining some limitations on the prime ideals which can ramify in tame Galois extensions of a number field with Galois group $G$. In particular, this gives an inclusion of $\rt(k,G)$ in an explicitly described subgroup of $\cl(k)$, which we baptize $\W(k,G)$ (see Definition \ref{WkG}). It seems natural to ask whether this inclusion is always an equality for groups of odd order (see Question \ref{question2}), while some results contained in \cite{Endo} give counterexamples already for abelian groups of even order. Note, in particular, that a positive answer to Question \ref{question2} would have Conjecture \ref{conj1} as a consequence (for groups of odd order).

In Section \ref{cohomol} we extend the result of \cite[Proposition 3.13]{Cobbe1}, using a theorem by Shafarevich and Weil as a tool to construct Galois extensions of a given number field. The main idea is to use some cohomological information encoded in the reciprocity map of an abelian extension of a number field, which is itself a Galois extension of a smaller field.

In the last two sections we answer affirmatively to Question \ref{question2} for $A'$-groups of odd order ($A'$-groups have been introduced in \cite{Cobbe1}, see also Definition \ref{A'groups}) and for $\ell$-groups of order up to $\ell^4$, where $\ell$ is an odd prime.

The case of groups of prime power order already shows some interesting complications, since for instance a two-step approach corresponding to a given splitting of the group as a semidirect product is not always enough to obtain all the needed Steinitz classes. The difficulty here comes from those extensions with prime ideals ramifying in both steps which, contrary to the case of $A'$-groups, cannot be left out. More precisely, given an \emph{arbitrary} element $\tau\in G$, we need to exhibit tame $G$-Galois extensions such that $\tau$ is a generator of the inertia group of some prime ideals belonging to some prescribed ideal classes. To this aim, in Section \ref{lgruppi}, we develop a technique involving the construction of a larger Galois extension of the ground field, whose Galois group has $G$ as a quotient (see the proof of Lemma \ref{type3}). This method may be used to prove the equality $\rt(k,G)=\W(k,G)$ for a lot of other groups $G$ of some particular kind. Anyway, we do not believe that it is worth to include more examples and we feel that one probably needs significantly new ideas to get some more general results.

\end{section}

\begin{section}{Some necessary conditions for the ramification of prime ideals in tame Galois extensions}\label{second}
For the rest of the paper we will use the letter $G$ to denote a finite group. For any $\tau\in G$, let $N_G(\tau)$ (resp. $C_G(\tau)$) be the normalizer (resp. the centralizer) of the subgroup of $G$ generated by $\tau$ and let $o(\tau)$ be the order of $\tau$. For a positive integer $s$, $\zeta_s$ will denote a primitive $s$-th root of unity. Finally we will always use the letter $\ell$ only for rational prime numbers, even if not explicitly indicated.

For any $\tau\in G$, we define a homomorphism
\[\varphi_\tau:N_G(\tau)\to(\Z/o(\tau)\Z)^\times\]
by $\varphi_\tau(\sigma)=\alpha$, where $\sigma\tau\sigma^{-1}=\tau^\alpha$. Furthermore, if $k$ is a number field, we need the cyclotomic character
\[\nu_{k,\tau}:\gal(k(\zeta_{o(\tau)})/k)\to (\Z/o(\tau)\Z)^\times,\]
which is defined by $\nu_{k,\tau}(g)=\beta$, where $g(\zeta_{o(\tau)})=\zeta_{o(\tau)}^\beta$. Note that $\nu_{k,\tau}$ is injective. We also set
\[H_{k,G,\tau}=\nu_{k,\tau}^{-1}(\varphi_\tau(N_G(\tau)))\subseteq \gal({k(\zeta_{o(\tau)})}/k)\]
and we call $E_{k,G,\tau}$ the subfield of $k(\zeta_{o(\tau)})$ which is fixed by $H_{k,G,\tau}$. 

\begin{remark}
We have the following commutative diagram

\[\xymatrix{
\varphi_\tau^{-1}(\nu_{k,\tau}(\gal({k(\zeta_{o(\tau)})}/k)))\ar@{^{(}->}[rr]^-{\mathrm{\iota}}\ar[d]^{\nu_{k,\tau}^{-1}\circ\varphi_\tau}&&N_G(\tau)\ar[d]^{\varphi_\tau}\\ \gal({k(\zeta_{o(\tau)})}/k)\ar[rr]^-{\nu_{k,\tau}}&& (\Z/o(\tau)\Z)^\times}\]
and it is easy to verify that $\varphi_\tau^{-1}(\nu_{k,\tau}(\gal({k(\zeta_{o(\tau)})}/k)))$ is the pullback of $\varphi_\tau$ and $\nu_{k,\tau}$. Its image in $\gal({k(\zeta_{o(\tau)})}/k)$ is exactly $H_{k,G,\tau}$.
\end{remark}

We start proving some elementary properties of the number fields $E_{k,G,\tau}$, which will be useful throughout the paper.

\begin{lemma}\label{primolemmaE}
Suppose that $G=H\rtimes_\mu \G$ is the (internal) semidirect product of two subgroups, $H$ and $\G$. Then, for all $\sigma\in\G$, we have
\[E_{k,G,\sigma}=E_{k,\G,\sigma}.\]
If $H$ is abelian, then, for all $\tau\in H$, $E_{k,G,\tau}$ coincides with the $E_{k,\mu,\tau}$ defined in \cite[Section 3.1]{Cobbe1}.
\end{lemma}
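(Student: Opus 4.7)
The plan is to reduce both statements to the observation that $E_{k,G,\tau}$ depends only on the image $\varphi_\tau(N_G(\tau)) \subseteq (\Z/o(\tau)\Z)^\times$, since $H_{k,G,\tau}=\nu_{k,\tau}^{-1}(\varphi_\tau(N_G(\tau)))$ and $E_{k,G,\tau}$ is its fixed field. So I just need to show that for $\sigma\in\G$ one has $\varphi_\sigma(N_G(\sigma))=\varphi_\sigma(N_\G(\sigma))$, and that for $\tau\in H$ (with $H$ abelian) the image $\varphi_\tau(N_G(\tau))$ coincides with the analogous image used to define $E_{k,\mu,\tau}$ in \cite[Section 3.1]{Cobbe1}.

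For the first part, the inclusion $\varphi_\sigma(N_\G(\sigma))\subseteq\varphi_\sigma(N_G(\sigma))$ is immediate since $N_\G(\sigma)\subseteq N_G(\sigma)$ and the homomorphism $\varphi_\sigma$ is intrinsic to $G$. For the reverse inclusion, I would take $g=h\gamma\in N_G(\sigma)$ with $h\in H$ and $\gamma\in\G$, and use the semidirect product structure to compute
\[g\sigma g^{-1}=h(\gamma\sigma\gamma^{-1})h^{-1}=h\,\mu(\gamma\sigma\gamma^{-1})(h^{-1})\cdot(\gamma\sigma\gamma^{-1}),\]
where $\gamma\sigma\gamma^{-1}\in\G$. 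Since $g\sigma g^{-1}=\sigma^{\varphi_\sigma(g)}\in\G$, the uniqueness of the decomposition $G=H\rtimes_\mu\G$ forces the $H$-component to be trivial and $\gamma\sigma\gamma^{-1}=\sigma^{\varphi_\sigma(g)}$. In particular $\gamma\in N_\G(\sigma)$ and $\varphi_\sigma(\gamma)=\varphi_\sigma(g)$, which gives the required inclusion.

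For the second part, since $H$ is abelian we have $H\subseteq C_G(\tau)\subseteq N_G(\tau)$, and $\varphi_\tau$ is trivial on $H$. A general element of $N_G(\tau)$ has the form $h\gamma$ with $h\in H$ and $\gamma\in\G$ satisfying $\mu(\gamma)(\tau)\in\langle\tau\rangle$; a direct computation shows $\varphi_\tau(h\gamma)=\varphi_\tau(\gamma)$ is precisely the exponent $\alpha$ such that $\mu(\gamma)(\tau)=\tau^\alpha$. Thus $\varphi_\tau(N_G(\tau))$ is determined entirely by the data $(\mu,\tau)$ and coincides with the set used in \cite[Section 3.1]{Cobbe1} to define $E_{k,\mu,\tau}$, giving $E_{k,G,\tau}=E_{k,\mu,\tau}$.

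The only delicate point is the first part: one must be careful with how the $H$-factor interacts with conjugation in the semidirect product. Once the uniqueness of the $H\rtimes_\mu\G$ decomposition is invoked, however, this collapses to a one-line verification, so I do not expect any serious obstacle.
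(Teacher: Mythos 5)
Your proof is correct and follows essentially the same route as the paper: for $\sigma\in\G$, write an element of $N_G(\sigma)$ as a product of its $H$- and $\G$-components, compute the conjugate of $\sigma$, and use the uniqueness of the decomposition in $H\rtimes_\mu\G$ to force the $H$-part to be trivial, giving $\varphi_\sigma(N_G(\sigma))=\varphi_\sigma(N_\G(\sigma))$ (the paper writes the element as $\sigma_1\tau_1$ rather than $h\gamma$, which is immaterial). The second statement, which the paper dismisses as trivial, is handled by you with the same observation ($H\subseteq C_G(\tau)$ when $H$ is abelian, so $\varphi_\tau(h\gamma)=\varphi_\tau(\gamma)$), so there is nothing to add.
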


\begin{proof}
Let $\sigma\in\G$ and let $\sigma_1\in\G$, $\tau_1\in H$ be such that $\sigma_1\tau_1\in N_G(\sigma)$. Then
\[\sigma^{\varphi_\sigma(\sigma_1\tau_1)}=\sigma_1\tau_1\sigma\tau_1^{-1}\sigma_1^{-1}=(\sigma_1\tau_1\sigma_1^{-1})(\sigma_1\sigma\tau_1^{-1}\sigma^{-1}\sigma_1^{-1})\sigma_1\sigma\sigma_1^{-1}.\]
Since $\sigma_1\tau_1\sigma_1^{-1}$, $\sigma_1\sigma\tau_1^{-1}\sigma^{-1}\sigma_1^{-1}\in H$ and each element of $G$ can be written uniquely as a product of an element of $H$ and one of $\G$, we deduce that
\[(\sigma_1\tau_1\sigma_1^{-1})(\sigma_1\sigma\tau_1^{-1}\sigma^{-1}\sigma_1^{-1})=1\]
and
\[\sigma^{\varphi_\sigma(\sigma_1\tau_1)}=\sigma_1\sigma\sigma_1^{-1}.\]
Hence $\sigma_1\in N_\G(\sigma)$ and $\varphi_\sigma(\sigma_1\tau_1)=\varphi_\sigma(\sigma_1)$. Therefore we obtain
\[\varphi_\sigma(N_G(\sigma))=\varphi_\sigma(N_\G(\sigma)),\]
i.e.
\[E_{k,G,\sigma}=E_{k,\G,\sigma}.\]
The second part of the lemma is trivial.
\end{proof}

\begin{lemma}\label{inclEpotsigma}\label{generalostesso}
Let $\sigma\in G$ and $n\in\N$. Then
\[E_{k,G,\sigma^n}\subseteq E_{k,G,\sigma}.\]
In particular if $\tau\in G$ generates the same cyclic subgroup as $\sigma$, then
\[E_{k,G,\sigma}=E_{k,G,\tau}.\]
\end{lemma}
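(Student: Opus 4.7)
The plan is to translate the inclusion of fixed fields into an inclusion of the corresponding Galois subgroups, via the restriction map associated to the tower $k \subseteq k(\zeta_{o(\sigma^n)}) \subseteq k(\zeta_{o(\sigma)})$, and then check this subgroup inclusion by chasing the defining pullback squares.

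Set $d=o(\sigma)$ and $m=o(\sigma^n)=d/\gcd(n,d)$, so that $m\mid d$ and $k(\zeta_m)\subseteq k(\zeta_d)$. Let $r\colon\gal(k(\zeta_d)/k)\to\gal(k(\zeta_m)/k)$ be the restriction map; since $\nu_{k,\sigma}$ and $\nu_{k,\sigma^n}$ are compatible with $r$ and with reduction $(\Z/d\Z)^\times\twoheadrightarrow(\Z/m\Z)^\times$, the inclusion $E_{k,G,\sigma^n}\subseteq E_{k,G,\sigma}$ is equivalent (by Galois theory applied inside $k(\zeta_d)/k$) to $r(H_{k,G,\sigma})\subseteq H_{k,G,\sigma^n}$.

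The first step will be the group-theoretic observation that $N_G(\sigma)\subseteq N_G(\sigma^n)$: if $g\sigma g^{-1}=\sigma^a$ then $g\sigma^n g^{-1}=(\sigma^n)^a$, so $g$ normalizes $\langle\sigma^n\rangle$ as well, and moreover
\[\varphi_{\sigma^n}(g)\equiv\varphi_\sigma(g)\pmod{m}.\]
Thus the reduction $(\Z/d\Z)^\times\twoheadrightarrow(\Z/m\Z)^\times$ sends $\varphi_\sigma(N_G(\sigma))$ into $\varphi_{\sigma^n}(N_G(\sigma^n))$.

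Combining the two compatibilities, for any $h\in H_{k,G,\sigma}$ I pick $g\in N_G(\sigma)$ with $\nu_{k,\sigma}(h)=\varphi_\sigma(g)$, and then
\[\nu_{k,\sigma^n}(r(h))=\nu_{k,\sigma}(h)\bmod m=\varphi_\sigma(g)\bmod m=\varphi_{\sigma^n}(g)\in\varphi_{\sigma^n}(N_G(\sigma^n)),\]
so $r(h)\in H_{k,G,\sigma^n}$, giving the desired subgroup inclusion and hence the field inclusion. For the ``in particular'' part, if $\langle\tau\rangle=\langle\sigma\rangle$ then $\tau=\sigma^n$ and $\sigma=\tau^{n'}$ for some integers $n,n'$ coprime to $d$, and applying the first part in both directions yields $E_{k,G,\tau}\subseteq E_{k,G,\sigma}\subseteq E_{k,G,\tau}$.

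There is no serious obstacle here; the only point to be careful about is the compatibility of the two unrelated-looking homomorphisms $\nu_{k,\cdot}$ and $\varphi_{\cdot}$ when one passes from $\sigma$ to $\sigma^n$, which is precisely what the commutative diagram in the preceding remark was set up to handle.
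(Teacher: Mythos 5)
Your argument is correct and follows essentially the same route as the paper: both rest on $N_G(\sigma)\subseteq N_G(\sigma^n)$ together with the compatibility of $\varphi_\sigma,\varphi_{\sigma^n}$ and of $\nu_{k,\sigma},\nu_{k,\sigma^n}$ under the reduction $(\Z/o(\sigma)\Z)^\times\to(\Z/o(\sigma^n)\Z)^\times$ and the restriction of Galois groups, and your statement $r(H_{k,G,\sigma})\subseteq H_{k,G,\sigma^n}$ is exactly the paper's $H_{k,G,\sigma}\subseteq\res^{-1}(H_{k,G,\sigma^n})$ translated through the Galois correspondence. The only difference is cosmetic (an element chase instead of a chain of set inclusions), and your handling of the ``in particular'' part matches the intended deduction.
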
 

\begin{proof}
We consider the restriction
\[\res:\gal({k(\zeta_{o(\sigma)})}/k)\to\gal({k(\zeta_{o(\sigma^n)})}/k)\]
and the projection
\[\pi:(\Z/o(\sigma)\Z)^\times\to (\Z/o(\sigma^n)\Z)^\times.\]
Using that $N_G(\sigma)\subseteq N_G(\sigma^n)$, we get commutative diagrams
\[\xymatrix{
N_G(\sigma)\ar[r]^-{\varphi_\sigma}\ar@{^{(}->}[d]^{\iota}&(\Z/o(\sigma)\Z)^\times\ar[d]^{\pi}\\ N_G(\sigma^n)\ar[r]^-{\varphi_{\sigma^n}}& (\Z/o(\sigma^n)\Z)^\times}
\qquad\qquad
\xymatrix{
\gal({k(\zeta_{o(\sigma)})}/k)\ar[r]^-{\nu_{k,\sigma}}\ar[d]^{\res}&(\Z/o(\sigma)\Z)^\times\ar[d]^{\pi}\\ \gal({k(\zeta_{o(\sigma^n)})}/k)\ar[r]^-{\nu_{k,\sigma^n}}& (\Z/o(\sigma^n)\Z)^\times.}
\]
Therefore $H_{k,G,\sigma}\subseteq \res^{-1}(H_{k,G,\sigma^n})$, since
\[\begin{split}
H_{k,G,\sigma}&=\nu_{k,\sigma}^{-1}(\varphi_\sigma(N_G(\sigma)))\\
&\subseteq \res^{-1}(\nu_{k,\sigma^n}^{-1}(\nu_{k,\sigma^n}( \res(\nu_{k,\sigma}^{-1}(\varphi_\sigma(N_G(\sigma)))))))\\
&= \res^{-1}(\nu_{k,\sigma^n}^{-1}(\pi(\nu_{k,\sigma}(\nu_{k,\sigma}^{-1}(\varphi_\sigma(N_G(\sigma)))))))\\
&\subseteq \res^{-1}(\nu_{k,\sigma^n}^{-1}( \pi(\varphi_\sigma(N_G(\sigma)))))\\
&=\res^{-1}( \nu_{k,\sigma^n}^{-1}(\varphi_{\sigma^n}(\iota(N_G(\sigma)))))\\
&\subseteq\res^{-1}( \nu_{k,\sigma^n}^{-1}(\varphi_{\sigma^n}(N_G(\sigma^n))))=\res^{-1}(H_{k,G,\sigma^n}).
\end{split}\]
Now the fixed field of $\res^{-1}(H_{k,G,\sigma^n})$ in $k(\zeta_{o(\sigma)})$ coincides with the fixed field of $H_{k,G,\sigma^n}$ in $k(\zeta_{o(\sigma^n)})$, i.e. with $E_{k,G,\sigma^n}$.
Therefore
\[E_{k,G,\sigma^n}\subseteq E_{k,G,\sigma}.\]
\end{proof}

\begin{lemma}\label{coniugati}
Let $\sigma,\tau\in G$, then \[E_{k,G,\sigma}=E_{k,G,\tau\sigma\tau^{-1}}.\]
\end{lemma}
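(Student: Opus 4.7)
The plan is to reduce the equality of fields to an equality of subgroups of $\gal(k(\zeta_{o(\sigma)})/k)$, and then further to an equality of subgroups of $(\Z/o(\sigma)\Z)^\times$. The crucial observation is that $\sigma$ and $\tau\sigma\tau^{-1}$ have the same order, so both $H_{k,G,\sigma}$ and $H_{k,G,\tau\sigma\tau^{-1}}$ sit inside the \emph{same} Galois group $\gal(k(\zeta_{o(\sigma)})/k)$, and are obtained by pulling back, via the same injective cyclotomic character $\nu_{k,\sigma}=\nu_{k,\tau\sigma\tau^{-1}}$, the images $\varphi_\sigma(N_G(\sigma))$ and $\varphi_{\tau\sigma\tau^{-1}}(N_G(\tau\sigma\tau^{-1}))$. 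Thus it suffices to show these two images in $(\Z/o(\sigma)\Z)^\times$ coincide.

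For this, I would first observe that conjugation by $\tau$ yields a bijection $N_G(\sigma) \to N_G(\tau\sigma\tau^{-1})$ sending $\rho \mapsto \tau\rho\tau^{-1}$. The main (and essentially only) computation is to check that this bijection is compatible with the maps $\varphi_\sigma$ and $\varphi_{\tau\sigma\tau^{-1}}$: namely, setting $\sigma' = \tau\sigma\tau^{-1}$ and $\alpha = \varphi_\sigma(\rho)$ so that $\rho\sigma\rho^{-1} = \sigma^\alpha$, one has
\[
(\tau\rho\tau^{-1})\,\sigma'\,(\tau\rho\tau^{-1})^{-1} = \tau(\rho\sigma\rho^{-1})\tau^{-1} = \tau\sigma^\alpha\tau^{-1} = (\sigma')^\alpha,
\]
so $\varphi_{\sigma'}(\tau\rho\tau^{-1}) = \alpha = \varphi_\sigma(\rho)$. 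This immediately gives $\varphi_{\sigma'}(N_G(\sigma')) = \varphi_\sigma(N_G(\sigma))$.

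Combining the two steps, $H_{k,G,\sigma} = H_{k,G,\sigma'}$ as subgroups of $\gal(k(\zeta_{o(\sigma)})/k)$, and taking fixed fields gives $E_{k,G,\sigma} = E_{k,G,\tau\sigma\tau^{-1}}$. There is no real obstacle here; the statement amounts to the functoriality of $\varphi$ under inner automorphisms together with the fact that conjugate elements share the same order (so we do not need to change cyclotomic field, unlike in Lemma \ref{inclEpotsigma}).
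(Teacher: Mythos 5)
Your proof is correct and follows essentially the same route as the paper: both show that conjugation by $\tau$ carries $N_G(\sigma)$ onto $N_G(\tau\sigma\tau^{-1})$ with $\varphi_{\tau\sigma\tau^{-1}}(\tau\rho\tau^{-1})=\varphi_\sigma(\rho)$, and then use $\nu_{k,\sigma}=\nu_{k,\tau\sigma\tau^{-1}}$ to get $H_{k,G,\sigma}=H_{k,G,\tau\sigma\tau^{-1}}$ and hence the equality of fixed fields.
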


\begin{proof}
We observe that $N_G(\tau\sigma\tau^{-1})=\tau N_G(\sigma)\tau^{-1}$. Further, for any $\rho\in N_G(\sigma)$, we have
\[\begin{split}(\tau\sigma\tau^{-1})^{\varphi_{\tau\sigma\tau^{-1}}(\tau\rho\tau^{-1})}&=(\tau\rho\tau^{-1})(\tau\sigma\tau^{-1})(\tau\rho\tau^{-1})^{-1}\\&=\tau\rho\sigma\rho^{-1}\tau^{-1}=\tau\sigma^{\varphi_\sigma(\rho)}\tau^{-1}=(\tau\sigma\tau^{-1})^{\varphi_\sigma(\rho)},\end{split}\]
i.e.
\[\varphi_{\tau\sigma\tau^{-1}}(\tau\rho\tau^{-1})=\varphi_\sigma(\rho).\]
Therefore
\[\varphi_{\tau\sigma\tau^{-1}}(N_G(\tau\sigma\tau^{-1}))=\varphi_{\tau\sigma\tau^{-1}}(\tau N_G(\sigma)\tau^{-1})=\varphi_\sigma(N_G(\sigma))\]
and, since clearly $\nu_{k,\sigma}=\nu_{k,\tau\sigma\tau^{-1}}$, we conclude that $H_{k,G,\sigma}=H_{k,G,\tau\sigma\tau^{-1}}$, and therefore $E_{k,G,\sigma}=E_{k,G,\tau\sigma\tau^{-1}}$.
\end{proof}

We recall the following important definition from \cite[Section I.2]{Endo} (see also \cite[Definition 2.8 and Proposition 2.10]{Cobbe1}).
\begin{definition}\label{defW}
For any finite abelian extension $K/k$ of number fields, we define
\[W(k,K)=N_{K/k}J_K\cdot P_k/P_k\subseteq \cl(k),\]
where $J_K$ is the group of fractional ideals of $K$, $P_k$ is the group of principal ideals in $k$ and the map $N_{K/k}$ is induced by the norm from $K$ to $k$. For any positive integer $m$, we use the notation $W(k,m)=W(k,k(\zeta_m))$.
\end{definition}

The following lemma is an immediate consequence of the definition.

\begin{lemma}\label{Wrovescia}
Let $L\supseteq K\supseteq k$ be number fields. Then
\[W(k,L)\subseteq W(k,K).\]
\end{lemma}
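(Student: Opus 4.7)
The plan is to deduce the inclusion directly from transitivity of the ideal norm, which is the standard fact that $N_{L/k} = N_{K/k} \circ N_{L/K}$ as maps of fractional ideal groups when $L \supseteq K \supseteq k$ are number fields.

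Concretely, I would start by taking an arbitrary fractional ideal $\mathfrak{a} \in J_L$ and writing
\[
N_{L/k}(\mathfrak{a}) = N_{K/k}\bigl(N_{L/K}(\mathfrak{a})\bigr).
\]
Since $N_{L/K}(\mathfrak{a}) \in J_K$, the right-hand side lies in $N_{K/k} J_K$. As $\mathfrak{a}$ was arbitrary, this gives the inclusion $N_{L/k} J_L \subseteq N_{K/k} J_K$ of subgroups of $J_k$. Multiplying by $P_k$ and passing to the quotient $J_k/P_k$ that sits inside $\cl(k)$ (using the fact that the operation of multiplying by $P_k$ and reducing mod $P_k$ is monotone in subgroups of $J_k$) then yields
\[
W(k,L) = N_{L/k} J_L \cdot P_k / P_k \;\subseteq\; N_{K/k} J_K \cdot P_k / P_k = W(k,K).
\]

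There is essentially no obstacle here: the only thing to verify is transitivity of the norm on fractional ideals, which follows at once from the multiplicativity of the norm on prime ideals together with the fact that every fractional ideal is a product of prime powers. Everything else is just unwinding the definition given in Definition~\ref{defW}. This is consistent with the remark preceding the statement that the lemma is an immediate consequence of the definition.
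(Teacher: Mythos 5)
Your proposal is correct and uses exactly the same argument as the paper, namely the transitivity of the ideal norm giving $N_{L/k}J_L = N_{K/k}(N_{L/K}J_L) \subseteq N_{K/k}J_K$, followed by passing to classes modulo $P_k$. The paper's proof is just a one-line version of what you wrote.
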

\begin{proof}
Actually: $N_{L/k}J_L=N_{K/k} (N_{L/K}J_L)\subseteq N_{K/k}J_K$.
\end{proof}

For an integer $n$ and a prime number $\ell$, we will always indicate by $n(\ell)$ the greatest power of $\ell$ dividing $n$. For an element $\sigma\in G$ of order $o(\sigma)$, we will denote by $\sigma(\ell)$ the element $\sigma^{o(\sigma)/o(\sigma)(\ell)}$, which is of order $o(\sigma)(\ell)$. Further we will denote by $G\{\ell\}$ the set of all the elements $\sigma\in G$ such that $\sigma(\ell)=\sigma$, i.e. of all the elements of order a power of $\ell$.

Let $A$ be a subgroup of an abelian group $B$ (written multiplicatively). For $t\in \frac{1}{2}\N\subset \mathbb{Q}$, we set 
\[A_B^t=\begin{cases}
\{x\in A:\ \exists\, a\in A,\ x=a^t\}&\textrm{if $t\in\N$}\\
\{x\in B:\ \exists\, a\in A,\ x^2=a^{2t}\}&\textrm{if $t\not\in\N$}.
\end{cases}.\]

In the above notation, we shall always omit the index $B$, since it will be clear from the context (actually $B$ will be an ideal class group).

We explicitly remark that in general $A\neq (A^2)^{1/2}$. For example, if $A$ is the trivial subgroup of the group $B$ with $2$ elements, then $(A^2)^{1/2}=B\neq A$.

\begin{lemma}\label{2.6acta}
Let $m$, $n$ be positive integers. If every prime $q$ dividing $n$ also divides $m$, then $W(k,m)^n\subseteq W(k,mn)$.
\end{lemma}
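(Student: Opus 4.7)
The plan is to exploit the tower $k\subseteq k(\zeta_m)\subseteq k(\zeta_{mn})$ and the transitivity of the ideal norm. Let $d=[k(\zeta_{mn}):k(\zeta_m)]$. The first step is to verify that the hypothesis ``every prime dividing $n$ also divides $m$'' forces $d\mid n$. Indeed, writing $m=\prod p_i^{a_i}$ and $n=\prod p_i^{b_i}$, the hypothesis says $b_i>0\Rightarrow a_i>0$, so the standard formula for Euler's totient gives $\varphi(mn)=n\varphi(m)$, hence $[\Q(\zeta_{mn}):\Q(\zeta_m)]=n$. Since $[k(\zeta_{mn}):k(\zeta_m)]$ divides this index, we obtain $d\mid n$. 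Write $n=de$.

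The second step is the core computation. Take any $I\in J_{k(\zeta_m)}$. Extending $I$ to $k(\zeta_{mn})$ and norming back yields
\[N_{k(\zeta_{mn})/k(\zeta_m)}\bigl(I\,\oo_{k(\zeta_{mn})}\bigr)=I^{d},\]
simply because the norm of an extended ideal equals the original raised to the relative degree. Applying $N_{k(\zeta_m)/k}$ to both sides and using transitivity of the norm,
\[\bigl(N_{k(\zeta_m)/k}I\bigr)^{d}=N_{k(\zeta_m)/k}\bigl(I^{d}\bigr)=N_{k(\zeta_{mn})/k}\bigl(I\,\oo_{k(\zeta_{mn})}\bigr)\in N_{k(\zeta_{mn})/k}J_{k(\zeta_{mn})}.\]

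The third step is to pass from the $d$-th power to the $n$-th power: raising the displayed equality to the $e$-th power and using that $N_{k(\zeta_{mn})/k}J_{k(\zeta_{mn})}$ is a group (so closed under powers) gives $\bigl(N_{k(\zeta_m)/k}I\bigr)^{n}\in N_{k(\zeta_{mn})/k}J_{k(\zeta_{mn})}$. Since $I$ was arbitrary in $J_{k(\zeta_m)}$ and the quotient by $P_k$ is compatible, this translates into $W(k,m)^n\subseteq W(k,mn)$.

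There is no real obstacle here; the entire content of the lemma is the numerical observation $d\mid n$, after which transitivity of the norm does all the work. The only place where the arithmetic hypothesis is used is in computing $\varphi(mn)/\varphi(m)$, so it is essential that the condition on the prime divisors of $n$ be imposed precisely to ensure $[\Q(\zeta_{mn}):\Q(\zeta_m)]=n$ rather than a smaller divisor of $n$.
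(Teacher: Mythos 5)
Your proof is correct and follows essentially the same route as the paper: both compute $[\Q(\zeta_{mn}):\Q(\zeta_m)]=\phi(mn)/\phi(m)=n$ from the hypothesis on prime divisors, note that $[k(\zeta_{mn}):k(\zeta_m)]$ divides this, and then use transitivity of the ideal norm in the tower $k\subseteq k(\zeta_m)\subseteq k(\zeta_{mn})$. The only cosmetic difference is that the paper norms $I^{n/d}\oo_{k(\zeta_{mn})}$ in one step, whereas you first obtain the $d$-th power and then raise to the $e$-th power; the two computations are identical in substance.
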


\begin{proof}
By the assumptions on $m$ and $n$, we have that \[[\Q(\zeta_{mn}):\Q(\zeta_{m})]=\frac{\phi(mn)}{\phi(m)}=n,\]
and this is a multiple of $[k(\zeta_{mn}):k(\zeta_{m})]$. Now, for any $x\in W(k,m)$, there exists an ideal $I$ in $k(\zeta_{m})$ such that $N_{k(\zeta_{m})/k}I$ is in the class of $x$. Hence we can conclude that $x^n\in W(k,mn)$, since
\[(N_{k(\zeta_{m})/k}I)^n=N_{k(\zeta_{mn})/k}(I^{n/[k(\zeta_{mn}):k(\zeta_{m})]}\oo_{k(\zeta_{mn})}).\]
The inclusion $W(k,m)^n\subseteq W(k,mn)$ follows.
\end{proof}

\begin{lemma}\label{An2inclusione}
Let $A$ be a subgroup of an abelian group $B$. If $d|m$, then $A^{m/2}\subseteq A^{d/2}$.
\end{lemma}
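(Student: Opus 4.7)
The plan is to unwind the piecewise definition of $A^{m/2}$ and $A^{d/2}$ and observe that everything reduces, in each case, to the trivial identity $a^m=(a^{m/d})^d$ together with the fact that $a^{m/d}\in A$, since $A$ is a subgroup.

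First I would note that the hypothesis $d\mid m$ rules out the combination ``$d$ even, $m$ odd''. This leaves three cases, according to which of $m/2$ and $d/2$ lies in $\N$: both integers; $m/2\in\N$ but $d/2\notin\N$; neither in $\N$. In each case I would start with $x\in A^{m/2}$, extract the witness $a\in A$ supplied by the definition, and take $b=a^{m/d}\in A$ as the candidate witness for $x\in A^{d/2}$. When $m/2\in\N$ one rewrites $x=a^{m/2}=(a^{m/d})^{d/2}=b^{d/2}$; when $m/2\notin\N$ one rewrites $x^2=a^m=(a^{m/d})^d=b^d$.

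The only case worth double-checking is the mixed one, where $m/2\in\N$ but $d/2\notin\N$ (so $m$ is even, $d$ is odd and $m/d$ is even): here the element $x=a^{m/2}$, which a priori lives in $A$, must be verified against the half-integer clause of the definition, but this is immediate from $x^2=a^m=b^d$. There is no real obstacle in the argument; the lemma is essentially bookkeeping to reconcile the two halves of the piecewise definition introduced just above.
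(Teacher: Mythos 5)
Your proof is correct and follows essentially the same route as the paper: extract the witness $a$, use $a^{m/d}\in A$ as the new witness, and invoke the half-integer clause via $x^2=a^m=(a^{m/d})^d$ when $d$ is odd. The paper merely merges your last two cases into one (``if $d$ is odd, then $x^2\in A^m\subseteq A^d$''), so the difference is only bookkeeping.
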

\begin{proof}
Let $x\in A^{m/2}$. If $d$ is even, then also $m$ must be even, and it is clear that $x\in A^{d/2}$. If $d$ is odd, then $x^2\in A^m\subseteq A^d$, i.e. $x\in A^{d/2}$.
\end{proof}

\begin{lemma}\label{Agcd}
Let $A$ be a subgroup of an abelian group $B$, let $m_1,\dots,m_n$ be positive integers and let $d$ be their greatest common divisor. Then
\[\prod_{i=1}^n A^{m_i/2}=A^{d/2}.\]
\end{lemma}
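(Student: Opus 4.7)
The plan is to establish the two inclusions separately. The inclusion $\prod_{i=1}^n A^{m_i/2}\subseteq A^{d/2}$ is immediate from Lemma \ref{An2inclusione}: since $d\mid m_i$ for every $i$, each factor $A^{m_i/2}$ lies in $A^{d/2}$, and $A^{d/2}$ is itself a subgroup of $B$ (being either the image of $A$ under the $(d/2)$-th power homomorphism, when $d$ is even, or the preimage of the subgroup $A^d$ under the squaring map $B\to B$, when $d$ is odd), so the product sits inside $A^{d/2}$.

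For the reverse inclusion I would reduce to $n=2$ by induction on $n$. The base $n=1$ is trivial, and for the inductive step, setting $d'=\gcd(m_1,\dots,m_{n-1})$ so that $d=\gcd(d',m_n)$, the inductive hypothesis yields $A^{d'/2}=\prod_{i=1}^{n-1}A^{m_i/2}$, and the two-element case applied to $(d',m_n)$ then gives $A^{d/2}=A^{d'/2}\cdot A^{m_n/2}=\prod_{i=1}^n A^{m_i/2}$. The substantive case $n=2$ I would treat with B\'ezout's identity, splitting on the parity of $d$. If $d$ is even, then $m_1,m_2$ are both even and $\gcd(m_1/2,m_2/2)=d/2$, so writing $d/2=c_1(m_1/2)+c_2(m_2/2)$ with $c_i\in\Z$ and taking $x=a^{d/2}\in A^{d/2}$ with $a\in A$, the factorization $x=(a^{c_1})^{m_1/2}(a^{c_2})^{m_2/2}$ exhibits $x\in A^{m_1/2}\cdot A^{m_2/2}$. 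If $d$ is odd, then at least one $m_i$ is odd, and a B\'ezout relation $d=c_1m_1+c_2m_2$ can be arranged, after possibly swapping indices, so that $c_2m_2$ is even (a straightforward parity count on $d=c_1m_1+c_2m_2$ shows this is always possible). Given $x\in A^{d/2}$, the definition supplies $a\in A$ with $x^2=a^d$; the element $y_2=a^{c_2m_2/2}\in A$ then lies in $A^{m_2/2}$ because $y_2^2=(a^{c_2})^{m_2}\in A^{m_2}$, and setting $y_1:=xy_2^{-1}$, the computation $y_1^2=a^{d-c_2m_2}=(a^{c_1})^{m_1}\in A^{m_1}$ places $y_1$ in $A^{m_1/2}$, so $x=y_1y_2\in A^{m_1/2}\cdot A^{m_2/2}$.

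The main obstacle is precisely the parity bookkeeping in the odd case: because the definition of $A^{m/2}$ changes qualitatively depending on whether $m$ is even or odd (a set of actual $(m/2)$-th powers inside $A$, versus a preimage in $B$ of $A^m$ under squaring), one cannot manipulate the exponents $m_i/2$ as though they were real numbers, and the B\'ezout coefficients must be chosen with the correct parity so that the explicit elements constructed land in the appropriate $A^{m_i/2}$ under whichever of the two definitions applies.
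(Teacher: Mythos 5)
Your proof is correct, but it organizes the computation differently from the paper. The paper proves the inclusion $\supseteq$ for all $n$ at once: from a single B\'ezout relation $\sum_i c_im_i=d$ it writes $x=\prod_i x^{c_im_i/d}$, the point being that $d\mid m_i$ makes each exponent $c_im_i/d$ an integer, and then checks (separately for $m_i$ odd and $m_i$ even, using that $m_i/d$ is even in the latter case when $d$ is odd) that each factor $x^{c_im_i/d}$ lies in $A^{m_i/2}$. You instead induct on $n$, reduce to the two-variable case, and there build one factor explicitly as a power of $a$ and take the other to be the quotient $xy_2^{-1}$; the price is the parity adjustment of the B\'ezout coefficients, which you correctly identify and which the paper's choice of exponents $c_im_i/d$ sidesteps. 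Two small points in your odd-$d$ case deserve to be made explicit: first, the criterion ``$y_2^2\in A^{m_2}$ implies $y_2\in A^{m_2/2}$'' is only the definition when $m_2$ is odd --- when $m_2$ is even you should instead just observe $y_2=(a^{c_2})^{m_2/2}$ with $a^{c_2}\in A$; second, the step ``$y_1^2=(a^{c_1})^{m_1}\in A^{m_1}$ places $y_1$ in $A^{m_1/2}$'' silently uses that $m_1$ is odd, which does hold in your setup (since $c_1m_1=d-c_2m_2$ is odd) but is exactly the kind of point that fails in general, as the paper's remark $A\neq(A^2)^{1/2}$ warns. With those two sentences added, your argument is complete and is an acceptable alternative to the paper's.
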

\begin{proof}
\begin{enumerate}
\item[$\subseteq$] By Lemma \ref{An2inclusione}, we have $A^{m_i/2}\subseteq A^{d/2}$ for $i=1,\dots,n$.
\item[$\supseteq$] Let $x\in A^{d/2}$ and let $c_1,\dots,c_n\in\Z$ be such that $\sum_{i=1}^n c_im_i=d$. If $d$ is even, then also every $m_i$ is even, and there exists $a\in A$ such that
\[x=a^{d/2}=a^{\sum_{i=1}^n c_im_i/2}=\prod_{i=1}^n(a^{c_i})^{m_i/2}\in \prod_{i=1}^n A^{m_i/2}.\]

If $d$ is odd, then there exists $a\in A$ such that $x^2=a^d$. Now let $i\in\{1,\dots,n\}$. If $m_i$ is odd, then
\[(x^{c_im_i/d})^2=a^{d(c_im_i/d)}=a^{c_im_i},\]
i.e.
\[x^{c_im_i/d}\in A^{m_i/2}.\]
If $m_i$ is even, then
\[x^{c_im_i/d}=x^{2c_im_i/(2d)}=a^{dc_im_i/(2d)}=a^{c_im_i/2}\in A^{m_i/2}.\]
Hence
\[x=x^{\sum_{i=1}^n c_im_i/d}=\prod_{i=1}^n x^{c_im_i/d}\in \prod_{i=1}^n A^{m_i/2}.\]
\end{enumerate}
\end{proof}
If $G$ is a group and $S$ is a subset of $G$ containing $1$, we use the notation $S^*$ for the subset $S\setminus\{1\}$.

\begin{proposition}\label{equivprimisep}
Let $k$ be a number field and let $G$ be a finite group. Then, for $i=0$ or $1$, we have
\[\prod_{\tau\in G^*}W(k,E_{k,G,\tau})^{\frac{o(\tau)-1}{2^i}\frac{\#G}{o(\tau)}}=\prod_{\ell|\#G}\ \prod_{\sigma\in G\{\ell\}^*}W(k,E_{k,G,\sigma})^{\frac{\ell-1}{2^i}\frac{\#G}{o(\sigma)}}.\]
\end{proposition}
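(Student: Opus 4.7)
The plan is to prove the two inclusions separately, using Lemmas \ref{Wrovescia}, \ref{An2inclusione} and \ref{Agcd} to reduce each direction to a purely arithmetic divisibility check on the exponents.

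\textbf{LHS $\subseteq$ RHS.} Fix $\tau\in G^*$ and write $n=o(\tau)=\prod_{\ell\mid n}\ell^{a_\ell}$. Since $\tau(\ell)=\tau^{n/\ell^{a_\ell}}$ is a power of $\tau$, Lemma \ref{inclEpotsigma} yields $E_{k,G,\tau(\ell)}\subseteq E_{k,G,\tau}$, whence $W(k,E_{k,G,\tau})\subseteq W(k,E_{k,G,\tau(\ell)})$ by Lemma \ref{Wrovescia}, for every prime $\ell\mid n$. Since $\tau(\ell)\in G\{\ell\}^*$, the RHS contains each term $W(k,E_{k,G,\tau(\ell)})^{(\ell-1)/2^i\cdot\#G/\ell^{a_\ell}}$, hence it contains
\[\prod_{\ell\mid n} W(k,E_{k,G,\tau})^{\frac{\ell-1}{2^i}\frac{\#G}{\ell^{a_\ell}}}=W(k,E_{k,G,\tau})^{d/2^i},\]
where $d=\gcd_{\ell\mid n}(\ell-1)\#G/\ell^{a_\ell}$ by Lemma \ref{Agcd}. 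By Lemma \ref{An2inclusione}, it suffices to show that $d\mid (n-1)\#G/n$. Factoring out the common multiplier $\#G/n$, this amounts to $e:=\gcd_{\ell\mid n}(\ell-1)(n/\ell^{a_\ell})\mid n-1$, which is a check of $p$-adic valuations: for $p\mid n$, the term $\ell=p$ has vanishing $p$-adic valuation, matching $v_p(n-1)=0$; for $p\nmid n$, each $\ell\mid n$ satisfies $\ell\equiv 1\pmod{p^{v_p(\ell-1)}}$, so $n\equiv 1\pmod{p^{\min_{\ell\mid n}v_p(\ell-1)}}$, giving $v_p(n-1)\geq\min_{\ell\mid n}v_p(\ell-1)=v_p(e)$.

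\textbf{RHS $\subseteq$ LHS.} Fix a prime $\ell\mid\#G$ and $\sigma\in G\{\ell\}^*$ with $o(\sigma)=\ell^a$. If $a=1$, then the $\tau=\sigma$ term in the LHS already coincides with the desired RHS term. If $a\geq 2$, then $\sigma^\ell\in G^*$ has order $\ell^{a-1}$, and Lemmas \ref{inclEpotsigma} and \ref{Wrovescia} give $W(k,E_{k,G,\sigma})\subseteq W(k,E_{k,G,\sigma^\ell})$. Thus the LHS contains both $W(k,E_{k,G,\sigma})^{(\ell^a-1)/2^i\cdot\#G/\ell^a}$ (from $\tau=\sigma$) and $W(k,E_{k,G,\sigma})^{(\ell^{a-1}-1)/2^i\cdot\#G/\ell^{a-1}}$ (from $\tau=\sigma^\ell$, after invoking the inclusion of $W$'s). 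By Lemma \ref{Agcd}, these combine to $W(k,E_{k,G,\sigma})^{D/2^i}$, where
\[D=\gcd\bigl((\ell^a-1)\#G/\ell^a,\,(\ell^{a-1}-1)\#G/\ell^{a-1}\bigr)=(\ell-1)\#G/\ell^a,\]
the final equality resting on the identity $\gcd(\ell^a-1,\ell(\ell^{a-1}-1))=\gcd(\ell^a-1,\ell-1)=\ell-1$. This is exactly the desired RHS term.

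The principal obstacle is the divisibility $d\mid (n-1)\#G/n$ in the first direction: the gcd $d$ properly divides $(n-1)\#G/n$ in general, so the statement cannot be reduced to a literal identification of exponents. It must instead be verified simultaneously at all primes, splitting the analysis according to whether $p\mid n$ or $p\nmid n$ and exploiting respectively the triviality of the $\ell$-adic valuation of $\ell-1$ and the elementary congruence that $n\equiv 1\pmod{p^\alpha}$ whenever every prime divisor of $n$ is so.
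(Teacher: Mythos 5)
Your proof is correct and follows essentially the same route as the paper's: decompose each $\tau$-factor into contributions from the $\ell$-parts $\tau(\ell)$ and use Lemmas \ref{inclEpotsigma}, \ref{Wrovescia}, \ref{An2inclusione} and \ref{Agcd} in one direction, and pair each $\sigma$-factor with a power of $\sigma$ via the same gcd mechanism in the other. The only differences are that you prove directly (via $p$-adic valuations) the divisibility $\gcd_{\ell\mid n}\bigl((\ell-1)n/\ell^{a_\ell}\bigr)\mid n-1$, which the paper outsources to \cite[Lemma 3.16]{Cobbe1}, and that in the reverse inclusion you pair $\sigma$ with $\sigma^{\ell}$ rather than with $\tilde\sigma=\sigma^{o(\sigma)/\ell}$ as in (\ref{gcd}); both choices yield the same gcd $(\ell-1)\#G/o(\sigma)$.
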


\begin{proof}
\begin{enumerate}
\item[$\subseteq$]
Using \cite[Lemma 3.16]{Cobbe1}, Lemma \ref{An2inclusione} and Lemma \ref{Agcd} (for the first inclusion in the following formula), Lemma \ref{inclEpotsigma} and Lemma \ref{Wrovescia} (for the second inclusion in the following formula), we see that
\[\begin{split}W(k,E_{k,G,\tau})^{\frac{o(\tau)-1}{2^i}\frac{\#G}{o(\tau)}}&\subseteq \prod_{\ell|o(\tau)}W(k,E_{k,G,\tau})^{\frac{\ell-1}{2^i}\frac{\#G}{o(\tau)(\ell)}}\\&\subseteq \prod_{\ell|o(\tau)}W(k,E_{k,G,\tau(\ell)})^{\frac{\ell-1}{2^i}\frac{\#G}{o(\tau(\ell))}}\\&\subseteq \prod_{\ell|\#G}\ \prod_{\sigma\in G\{\ell\}^*} W(k,E_{k,G,\sigma})^{\frac{\ell-1}{2^i}\frac{\#G}{o(\sigma)}}.\end{split}\]
\item[$\supseteq$] Let $\sigma\in G\{\ell\}^*$, where $\ell$ is a prime number dividing $\#G$, and set $\tilde\sigma=\sigma^{o(\sigma)/\ell}$.
Clearly
\begin{equation}\gcd\left((\ell-1)\frac{\#G}{\ell},(o(\sigma)-1)\frac{\#G}{o(\sigma)}\right)=(\ell-1)\frac{\#G}{o(\sigma)}.\label{gcd}\end{equation}
Hence, using Lemma \ref{inclEpotsigma}, Lemma \ref{Wrovescia} and Lemma \ref{Agcd},
\[\begin{split}W(k,E_{k,G,\sigma})^{\frac{\ell-1}{2^i}\frac{\#G}{o(\sigma)}}&= W(k,E_{k,G,\sigma})^{\frac{\ell-1}{2^i}\frac{\#G}{\ell}}W(k,E_{k,G,\sigma})^{\frac{o(\sigma)-1}{2^i}\frac{\#G}{o(\sigma)}}\\
&\subseteq W(k,E_{k,G,\tilde\sigma})^{\frac{o(\tilde\sigma)-1}{2^i}\frac{\#G}{o(\tilde\sigma)}}W(k,E_{k,G,\sigma})^{\frac{o(\sigma)-1}{2^i}\frac{\#G}{o(\sigma)}}\\
&\subseteq \prod_{\tau\in G^*}W(k,E_{k,G,\tau})^{\frac{o(\tau)-1}{2^i}\frac{\#G}{o(\tau)}}.
\end{split}\]
\end{enumerate}
\end{proof}

\begin{definition}\label{WkG}
Let $k$ be a number field and $G$ be a finite group. Then we set
\[\W(k,G)=\prod_{\tau\in G^*}W(k,E_{k,G,\tau})^{\frac{o(\tau)-1}{2}\frac{\#G}{o(\tau)}}=\prod_{\ell|\#G}\ \prod_{\sigma\in G\{\ell\}^*}W(k,E_{k,G,\sigma})^{\frac{\ell-1}{2}\frac{\#G}{o(\sigma)}}\subseteq\cl(k).\]
\end{definition}

We now come to the principal results of this section. We shall show that the classes of primes ramifying in a tame $G$-Galois extension of $k$ have to satisfy some conditions. As a consequence, we will get the inclusion $\rt(k,G)\subseteq \W(k,G)$.

If $K/k$ is a Galois extension of number fields and $\P$ is a prime ideal in $K$, we denote by $D(K/k,\P)$ (resp. $I(K/k,\P)$) the decomposition group (resp. the inertia group) of $\P$ in $K/k$. Further we denote by $d(K/k)$ the discriminant of $K$ over $k$.

\begin{proposition}\label{primiramificati}
Let $K/k$ be a tame $G$-Galois extension of number fields, let $\p$ be a prime of $k$ ramifying in $K/k$ and let $\tau$ be a generator of $I(K/k,\P)$, where $\P$ is a prime of $K$ dividing $\p$. Then
\[x\in W(k,E_{k,G,\tau}),\]
where $x$ is the class of $\p$ in $\cl(k)$.
\end{proposition}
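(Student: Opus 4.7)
The plan is to show that $\p$ splits completely in $E_{k,G,\tau}/k$. Once this is established, any prime $\q$ of $E_{k,G,\tau}$ lying above $\p$ satisfies $N_{E_{k,G,\tau}/k}\q=\p$ (since its residue degree is $1$), so $\p\in N_{E_{k,G,\tau}/k}J_{E_{k,G,\tau}}$, whence its class $x$ lies in $W(k,E_{k,G,\tau})$ by Definition \ref{defW}.

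To prove that $\p$ splits completely in $E_{k,G,\tau}/k$, first I would remark that $\p$ is unramified in the abelian extension $k(\zeta_{o(\tau)})/k$. This is because tameness of $K/k$ at $\p$ forces the residue characteristic of $\p$ to be coprime to $o(\tau)=|I(K/k,\P)|$, so the local extension $k_\p(\zeta_{o(\tau)})/k_\p$ is unramified. Let $\phi_\p\in\gal(k(\zeta_{o(\tau)})/k)$ denote the Frobenius of $\p$; by definition of $\nu_{k,\tau}$ we have $\nu_{k,\tau}(\phi_\p)\equiv N\p\pmod{o(\tau)}$, where $N\p$ is the cardinality of the residue field of $\p$. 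Since $E_{k,G,\tau}$ is the fixed field of $H_{k,G,\tau}=\nu_{k,\tau}^{-1}(\varphi_\tau(N_G(\tau)))$, showing $\phi_\p\in H_{k,G,\tau}$ amounts to showing that $N\p\bmod o(\tau)$ belongs to $\varphi_\tau(N_G(\tau))$.

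For this last point I would invoke the standard description of tame ramification: the conjugation action of $D(K/k,\P)$ on the cyclic inertia subgroup $I(K/k,\P)=\langle\tau\rangle$ is given by the cyclotomic character of the residue extension. Concretely, if $\sigma\in D(K/k,\P)$ is any lift of the Frobenius of $\P$, then $\sigma\tau\sigma^{-1}=\tau^{N\p}$. Since $D(K/k,\P)$ normalizes $\langle\tau\rangle$, it is contained in $N_G(\tau)$, and the identity above reads $\varphi_\tau(\sigma)\equiv N\p\pmod{o(\tau)}$. Hence $N\p\in\varphi_\tau(N_G(\tau))$, which gives $\phi_\p\in H_{k,G,\tau}$ and therefore $\phi_\p$ acts trivially on $E_{k,G,\tau}$. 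Since $k(\zeta_{o(\tau)})/k$ is abelian, this suffices to conclude that $\p$ splits completely in $E_{k,G,\tau}/k$, completing the proof.

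The only non-formal ingredient is the cyclotomic-character description of Frobenius conjugation on tame inertia; once that is in hand, the remainder of the argument is a direct bookkeeping with the definitions of $\varphi_\tau$, $\nu_{k,\tau}$ and $H_{k,G,\tau}$.
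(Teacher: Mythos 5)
Your proof is correct and rests on the same key fact as the paper's: the compatibility of the canonical tame inertia character with the action of the decomposition group, which forces the Frobenius (resp.\ decomposition) data of $\p$ in $k(\zeta_{o(\tau)})/k$ to land in $H_{k,G,\tau}$ and hence $\p$ to have trivial splitting behaviour in $E_{k,G,\tau}/k$. The only organisational difference is that you work solely with the Frobenius of $\p$ in $k(\zeta_{o(\tau)})/k$ (using $\sigma\tau\sigma^{-1}=\tau^{N\p}$ together with the separate observation that tameness makes $\p$ unramified there), whereas the paper treats an arbitrary element of the decomposition group of a prime of the compositum $K(\zeta_{o(\tau)})$; both arguments are essentially the same.
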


\begin{proof}
Let $\tilde{\P}$ be a prime above $\P$ in $K(\zeta_{o(\tau)})$. In order to show that $x\in W(k,E_{k,G,\tau})$, we will prove that $D(K(\zeta_{o(\tau)})/k,\tilde{\P})$ is contained in $\gal(K(\zeta_{o(\tau)})/E_{k,G,\tau})$. It will follow that $\p$ has inertia degree $1$ in $E_{k,G,\tau}/k$, and hence that it is the norm of an ideal in $E_{k,G,\tau}/k$.

Recall that there is a canonical isomorphism between $I(K/k,\P)$ and a subgroup of the units of the residue field $\kappa_\P$ of $K$ at $\P$, which is easily seen to be invariant with respect to the action of the decomposition group of $\P$ in $K/k$ (see \cite[Proposition IV.2.7]{SerreLocalFields}). Hence $\kappa_\P^{\times}$ contains an element of order $o(\tau)$ and therefore $\kappa_{\P}=\kappa_{\tilde{\P}}$, where the latter denotes the residue field of $K(\zeta_{o(\tau)})$ at $\tilde{\P}$.

Let now $\delta\in D(K(\zeta_{o(\tau)})/k,\tilde{\P})$. Then $\delta|_K\in N_G(\tau)$, since $\delta|_K\in D(K/k,\P)$ and $I(K/k,\P)\triangleleft D(K/k,\P)$. By definition, 
\[\tau^{\varphi_{\tau}(\delta|_K)}=(\delta|_K)\tau(\delta|_K)^{-1}\]
and therefore, using the invariance of the above mentioned isomorphism, $\delta|_K$ (and hence also $\delta$) acts as raising to the power $\varphi_{\tau}(\delta|_K)$ on the subgroup of order $o(\tau)$ of $\kappa_\P^\times=\kappa_{\tilde{\P}}^\times$. Recalling that the powers of $\zeta_{o(\tau)}$ are distinct modulo $\tilde\P$ (since $\tilde\P\nmid o(\tau)$), we get that 
$$\zeta_{o(\tau)}^{\nu_{k,\tau}(\delta|_{k(\zeta_{o(\tau)})})}=\delta(\zeta_{o(\tau)})=\zeta_{o(\tau)}^{\varphi_{\tau}(\delta|_K)}.$$
In particular, $\nu_{k,\tau}(\delta|_{k(\zeta_{o(\tau)})})=\varphi_{\tau}(\delta|_K)$, i.e. 
\[\delta|_{k(\zeta_{o(\tau)})}\in H_{k,G,\tau}=\gal(k(\zeta_{o(\tau)})/E_{k,G,\tau}).\]
Thus
\[\delta\in \gal(K(\zeta_{o(\tau)})/E_{k,G,\tau})\]
and, since the choice of $\delta\in D(K(\zeta_{o(\tau)})/k,\tilde{\P})$ was arbitrary, we have
\[D(K(\zeta_{o(\tau)})/k,\tilde{\P})\subseteq \gal(K(\zeta_{o(\tau)})/E_{k,G,\tau}).\]
This shows that $x\in W(k,E_{k,G,\tau})$.
\end{proof}

\begin{theorem}\label{secondinclusion}
Let $k$ be a number field and $G$ a finite group. Then
\[\rt(k,G)\subseteq\mathcal{W}(k,G).\]
\end{theorem}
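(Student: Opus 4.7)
My strategy is to combine Proposition \ref{primiramificati} with the classical identity relating the Steinitz class of a tame Galois extension to its discriminant.

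The first step is to invoke the known relation
\[
\st(K/k)^2 = d(K/k) \quad \text{in } \cl(k),
\]
valid for any tame Galois extension $K/k$ (cf.\ the formulas used in \cite{Cobbe1}). By tameness, the inertia group at each prime $\P$ of $K$ above a ramified prime $\p$ of $k$ is cyclic of order equal to the ramification index; if I fix a prime $\P$ above each such $\p$ and let $\tau_\p$ be a generator of $I(K/k,\P)$, then the discriminant factors as
\[
d(K/k) = \prod_{\p \text{ ram}} \p^{(o(\tau_\p)-1)\#G/o(\tau_\p)}.
\]
Using the half-integer power notation introduced just before Lemma \ref{2.6acta}, the identity above rewrites as the membership
\[
\st(K/k) \in \prod_{\p \text{ ram}} \langle[\p]\rangle^{(o(\tau_\p)-1)\#G/(2o(\tau_\p))},
\]
where $\langle[\p]\rangle$ denotes the cyclic subgroup of $\cl(k)$ generated by the class of $\p$.

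Next, I would apply Proposition \ref{primiramificati} to conclude that $[\p] \in W(k,E_{k,G,\tau_\p})$ for each ramified $\p$. By Lemma \ref{coniugati}, the field $E_{k,G,\tau_\p}$ depends only on the conjugacy class of $\tau_\p$ in $G$, so it is independent of the initial choice of prime $\P$ above $\p$. Consequently, the factor attached to $\p$ in the displayed product lies in $W(k,E_{k,G,\tau_\p})^{(o(\tau_\p)-1)\#G/(2o(\tau_\p))}$. Regrouping the ramified primes according to which element $\tau \in G^*$ appears as their inertia generator, and comparing with Definition \ref{WkG}, I conclude
\[
\st(K/k) \in \prod_{\tau \in G^*} W(k,E_{k,G,\tau})^{(o(\tau)-1)\#G/(2o(\tau))} = \mathcal{W}(k,G),
\]
which is the desired inclusion.

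The only delicate point is the interpretation of the half-integer exponent in the case when $(o(\tau)-1)\#G/o(\tau)$ is odd, which forces $o(\tau)$ to be even and $\#G/o(\tau)$ odd (and so never occurs when $\#G$ is odd, the main setting of interest in the paper). The definition of $A^{t/2}$ given before Lemma \ref{2.6acta} was designed precisely to make the translation from the squared identity $\st(K/k)^2 = d(K/k)$ to the membership $\st(K/k) \in \mathcal{W}(k,G)$ unambiguous in this case; everything else is routine bookkeeping on top of Proposition \ref{primiramificati} and Lemma \ref{coniugati}.
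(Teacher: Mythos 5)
Your overall architecture---factor the discriminant over the ramified primes, apply Proposition \ref{primiramificati} to each of them, and assemble the result into $\W(k,G)$---is the same as the paper's, but the pivotal step ``the identity above rewrites as the membership'' is a genuine gap, not bookkeeping. You invoke only the class-level relation $\st(K/k)^2=[d(K/k)]$, which determines $\st(K/k)$ only up to $2$-torsion in $\cl(k)$. In the main case of interest, $\#G$ odd (more generally, $G$ with trivial or noncyclic $2$-Sylow subgroups), every exponent $m_\p=(o(\tau_\p)-1)\#G/o(\tau_\p)$ is \emph{even}, so each set $\langle[\p]\rangle^{m_\p/2}$ is the honest subgroup generated by $[\p]^{m_\p/2}$; from $\st(K/k)^2=\bigl(\prod_\p[\p]^{m_\p/2}\bigr)^2$ you can only conclude that $\st(K/k)=c\cdot\prod_\p[\p]^{m_\p/2}$ with $c^2=1$, and nothing in your argument forces $c$ to lie in the product of those subgroups. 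The half-integer-power convention does not rescue you, since it relaxes membership only when the exponent is odd. The paper closes exactly this gap by citing the stronger, \emph{ideal-level} statement of \cite[Theorem 2.1]{Cobbe1}: in this case $d(K/k)=I^2$ as ideals with $[I]=\st(K/k)$, which yields the exact equality $\st(K/k)=\prod_\p[\p]^{m_\p/2}$ rather than an equality up to $2$-torsion.

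In the remaining case ($G$ with nontrivial cyclic $2$-Sylow subgroups) your ``regrouping'' also hides a real argument. There odd exponents do occur, but you must then distribute a single square root of $\prod_\tau a_\tau^{m_\tau}$ among several factors $W(k,E_{k,G,\tau})^{m_\tau/2}$ attached to possibly different subgroups; the definition of $A^{m/2}$ demands, for each factor separately, an element whose square is an $m$-th power of an element of that particular $A$, and such a splitting need not exist when several $m_\tau$ are odd and the $A$'s differ. The paper makes this work by first applying Proposition \ref{equivprimisep} and then observing that the odd exponents occur only for $\tau$ of order $\#G(2)$; all such $\tau$ generate conjugates of the one cyclic $2$-Sylow subgroup, so by Lemmas \ref{coniugati} and \ref{generalostesso} they share a single field $E_{k,G,\tilde\sigma}$, the odd-exponent factors collapse into a single $W(k,E_{k,G,\tilde\sigma})^{\frac{1}{2}\frac{\#G}{o(\tilde\sigma)}}$, and the one square root can be assigned to that one factor. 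Both points need to be supplied before your argument is complete.
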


\begin{proof}
Let $K/k$ be a tame Galois extension with Galois group $G$. Its discriminant is
\[d(K/k)=\prod_\p \p^{(e_\p-1)\frac{\#G}{e_\p}},\]
where the product runs over the primes $\p$ of $k$ which ramify in $K/k$ and whose ramification index in $K/k$ is denoted by $e_\p$. Note that $e_\p$ equals the order of an element $\tau$ generating the inertia group of a prime ideal $\P$ of $K$ dividing $\p$.

If $G$ has odd order or noncyclic $2$-Sylow subgroups, then $d(K/k)$ is the square of an ideal whose class is $\st(K/k)$, by \cite[Theorem 2.1]{Cobbe1}. Hence, in this case, using Proposition \ref{primiramificati} we obtain
\[\st(K/k)\in \prod_{\tau\in G^*}W(k,E_{k,G,\tau})^{\frac{o(\tau)-1}{2}\frac{\#G}{o(\tau)}}=\W(k,G).\]

If $G$ has nontrivial cyclic $2$-Sylow subgroups, then by \cite[Theorem 2.1]{Cobbe1} and Proposition \ref{primiramificati} we only obtain
\[\st(K/k)\in \left(\prod_{\tau\in G^*}W(k,E_{k,G,\tau})^{(o(\tau)-1)\frac{\#G}{o(\tau)}}\right)^{1/2}\]
i.e., using Proposition \ref{equivprimisep},
\[\st(K/k)^2\in \prod_{\ell|\#G}\ \prod_{\sigma\in G\{\ell\}^*}W(k,E_{k,G,\sigma})^{(\ell-1)\frac{\#G}{o(\sigma)}}.\]
This means that, for every $\ell|\#G$ and every $\sigma\in G\{\ell\}^*$, there exists $x_\sigma\in W(k,E_{k,G,\sigma})$ such that
\[\st(K/k)^2=\prod_{\ell|\#G}\ \prod_{\sigma\in G\{\ell\}^*}x_\sigma^{(\ell-1)\frac{\#G}{o(\sigma)}}.\]
In the above formula, the only odd exponents in the right-hand side are those corresponding to the elements $\sigma\in G$ of order $\#G(2)$ (see the notation after Lemma \ref{Wrovescia}). For all the other terms we have
\[\prod_{\ell|\#G}\prod_{\substack{\sigma\in G\{\ell\}^*\\o(\sigma)\neq \#G(2)}}x_\sigma^{\frac{\ell-1}{2}\frac{\#G}{o(\sigma)}}\in \prod_{\ell|\#G}\ \prod_{\sigma\in G\{\ell\}^*}W(k,E_{k,G,\sigma})^{	\frac{\ell-1}{2}\frac{\#G}{o(\sigma)}}.\]
Further
\[\left(\st(K/k)\prod_{\ell|\#G}\prod_{\substack{\sigma\in G\{\ell\}^*\\o(\sigma)\neq \#G(2)}}x_\sigma^{-\frac{\ell-1}{2}\frac{\#G}{o(\sigma)}}\right)^2\in\prod_{\sigma\in G:\ o(\sigma)=\#G(2)}W(k,E_{k,G,\sigma})^{\frac{\#G}{o(\sigma)}}.\]
By the Sylow Theorems, all the $2$-Sylow subgroups of $G$ are conjugate to a single cyclic group, generated, say, by an element $\tilde\sigma$. Hence, for every $\sigma\in G$ of order $\#G(2)$, there exists $\tau\in G$ such that $\langle\sigma\rangle=\langle\tau\tilde\sigma\tau^{-1}\rangle$. Therefore, by Lemma \ref{coniugati} and Lemma \ref{generalostesso}, for all such $\sigma$, $E_{k,G,\sigma}=E_{k,G,\tilde\sigma}$. So we obtain
\[\st(K/k)\prod_{\ell|\#G}\prod_{\substack{\sigma\in G\{\ell\}^*\\o(\sigma)\neq \#G(2)}}x_\sigma^{-\frac{\ell-1}{2}\frac{\#G}{o(\sigma)}}\in W(k,E_{k,G,\tilde\sigma})^{\frac{1}{2}\frac{\#G}{o(\tilde\sigma)}}.\]
i.e.
\[\st(K/k)\in \prod_{\ell|\#G}\ \prod_{\sigma\in G\{\ell\}^*}W(k,E_{k,G,\sigma})^{\frac{\ell-1}{2}\frac{\#G}{o(\sigma)}}=\W(k,G).\]
This completes the proof.
\end{proof}

\begin{remark}
Using the results of L. McCulloh on realizable classes, one can obtain a different proof of Theorem \ref{secondinclusion}. More precisely for a tame $G$-Galois extension $K/k$, let $\mathrm{cl}_{\mathcal{O}_k[G]}(\mathcal{O}_K)$ denote the class defined by $\mathcal{O}_K$ in the projective class group $\cl(\mathcal{O}_k[G])$. Then McCulloh proved that, for every number field $k$ and every group $G$, the set  
\[R(\mathcal{O}_k[G])=\{\xi\in \cl(\mathcal{O}_k[G]):\ \exists\, K/k\text{ tame Galois, }\gal(K/k)\cong G, \mathrm{cl}_{\mathcal{O}_k[G]}(\mathcal{O}_K)=\xi\}\]
is contained in the kernel of a homomorphism defined on $\cl(\mathcal{O}_k[G])$ (see \cite{McCullohun1}). He then used this inclusion to derive an equivalent version of Theorem \ref{secondinclusion} (see \cite{McCullohun2}).
\end{remark}

One may wonder for which groups the inclusion of the above theorem is actually an equality (for every number field $k$). It is easy to find groups of even order for which the inclusion may be strict for some number fields.

\begin{example}
Let $k=\Q(i,\sqrt{10})$ and $G=C(8)$ be a cyclic group of order $8$. The field $k(\zeta_8)$ is obtained extending $k$ with a root of the polynomial $x^2-i$, whose roots are $\zeta_8$ and $\zeta_8^5$. Hence
\[\gal(k(\zeta_{8})/k)\subseteq \gal(\Q(\zeta_{8})/\Q)=(\Z/8\Z)^\times\]
is of order $2$ and is generated by the class of $5$. In particular it is different from the subgroups generated by the class of $-5$ and by the class of $-25$. We obtain by \cite[II.2.6]{Endo} that
\[\rt(k,C(8))=W(k,8).\]
On the other hand, it is easy to verify, using Lemma \ref{2.6acta}, that $\W(k,C(8))=W(k,8)^{\frac{1}{2}}$. We now show that $\rt(k,C(8))\neq\W(k,C(8))$. Actually, an easy calculation with PARI/GP shows that $\cl(k(\zeta_{8}))$ is trivial, while $\cl(k)$ is of order $2$. It follows that $\rt(k,C(8))=W(k,8)$ is the trivial group, while $\W(k,C(8))=W(k,8)^{\frac{1}{2}}$ is equal to $\cl(k)$, which is of order $2$.\qed
\end{example}

On the other hand, for a lot of groups of odd order, including those for which Conjecture \ref{conj1} is known to hold so far, we will show in Sections \ref{section4} and \ref{section5} that the inclusion of Theorem \ref{secondinclusion} is indeed an equality (for every number field $k$).
To simplify statements in the next sections, we will adopt the following working definitions.

\begin{definition}
Given a number field $F$, we will say that an extension $K/k$ of number fields satisfies the property $P(F)$ if it is tame, with no nontrivial unramified subextensions and such that $d(K/k)$ is prime to $d(F/\Q)$. To simplify notations we will say that $K/k$ satisfies $P(s)$ if it satisfies $P(k(\zeta_s))$.
\end{definition}

\begin{definition}\label{verygood}
We call a finite group $G$ \emph{very good} if the following conditions hold:
\begin{enumerate}
\item For every number field $k$, we have
\[\rt(k,G)=\W(k,G).\]
\item For every number field $k$, for every class $x\in\rt(k,G)$ and every positive integer $s$, there exists a $G$-Galois extension $K/k$ with Steinitz class $x$ and satisfying $P(s)$.
\end{enumerate}
\end{definition}

It is clear that a very good group of odd order is also good, in the sense of \cite[Definition 3.15]{Cobbe1}. Further it seems reasonable to expect that the second condition of the above definition is satisfied for every group $G$, including those not satisfying the first condition.

With these definitions and in view of the results that will be proved in Sections \ref{section4} and \ref{section5}, it seems natural to ask the following question.

\begin{question}\label{question2}
Is every group $G$ of odd order very good?
\end{question}

Of course a positive answer would also imply Conjecture \ref{conj1} for all groups of odd order.

We do not know the answer to Question \ref{question2}, but we suspect that it might be affirmative. The main point here is to construct tame $G$-Galois extensions with some given Steinitz classes; of course this could be very difficult in general, but in the next section we will give some results in this direction. 

\end{section}

\begin{section}{Constructive inclusion}\label{cohomol}
In this section we develop some techniques, which we will need in the last two sections to construct some tame Galois extensions of number fields with given Galois groups and Steinitz classes.

Let $\G$ be a finite group and let $H$ be an abelian group. Let $\mu:\G\to\aut(H)$ be an action of $\G$ on $H$ and let $G$ be a group such that there exists an exact sequence
\begin{equation}1\to H\to G\to \G\to 1\label{succesatta}\end{equation}
and the action of $\G$ on $H$ induced by the conjugation coincides with $\mu$. Let $k_1$ be a $\G$-Galois extension of a number field $k$ and $K/k_1$ an $H$-Galois extension such that $K/k$ is Galois with Galois group $G$. Let $C_{k_1}$ be the id\`ele class group of $k_1$.

\begin{theorem}[Shafarevich-Weil]\label{ShafarevichWeil}
With notation and hypotheses as above, the reciprocity map $(\ ,K/k_1):C_{k_1}\to H$ induces a map 
\[H^2(\G,C_{k_1})\to H^2(\G,H),\]
which sends the fundamental class $u\in H^2(\G,C_{k_1})$ to the class of the group extension (\ref{succesatta}). In particular, the image of $u$ is trivial precisely when $G$ is isomorphic to the semidirect product $H\rtimes_\mu \G$.
\end{theorem}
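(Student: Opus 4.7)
The plan is to realize the fundamental class $u$ as the class of the global relative Weil group extension, push it forward along the reciprocity map, and identify the resulting extension of $\G$ by $H$ with (\ref{succesatta}). The first step is to verify that $\rho:=(\ ,K/k_1)\colon C_{k_1}\to H$ is $\G$-equivariant, which is what is needed even to make sense of the induced map $\rho_\ast\colon H^2(\G,C_{k_1})\to H^2(\G,H)$. Given $\sigma\in\G$ and $a\in C_{k_1}$, pick any lift $\tilde\sigma\in G$. By the functoriality of Artin reciprocity under Galois conjugation of id\`eles, $\rho(\sigma\cdot a)=\tilde\sigma\,\rho(a)\,\tilde\sigma^{-1}$. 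Since $H$ is abelian and normal in $G$, this inner conjugation does not depend on the chosen lift and agrees with $\mu(\sigma)$ acting on $\rho(a)$, so $\rho$ is $\G$-equivariant.

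Next I would interpret $u$ as an extension class: by the very definition of the fundamental class in global class field theory, $u$ classifies the relative Weil group extension
\[1\to C_{k_1}\to W_{k_1/k}\to\G\to 1,\]
and the kernel $\ker(\rho)=N_{K/k_1}C_K\subseteq C_{k_1}$ is $\G$-stable (hence normal in $W_{k_1/k}$) by the equivariance just established. Therefore $\rho_\ast(u)$ is represented by the pushout
\[1\to H\to W_{k_1/k}/\ker(\rho)\to\G\to 1,\]
and the theorem reduces to producing an isomorphism between this extension and (\ref{succesatta}).

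This last identification is the main obstacle and the real content of the result. The cleanest approach is to bring in the Weil group $W_{K/k}$ of the full Galois extension $K/k$, sitting in $1\to C_K\to W_{K/k}\to G\to 1$, together with the tower property $W_{k_1/k}\cong W_{K/k}/\overline{[W_{K/k_1},W_{K/k_1}]}$, where $W_{K/k_1}$ denotes the preimage of $H$ in $W_{K/k}$. The reciprocity isomorphism for $K/k_1$ identifies $W_{K/k_1}^{\mathrm{ab}}$ with $C_{k_1}$, and under this identification the subgroup $\ker(\rho)=N_{K/k_1}C_K$ corresponds to the image of $C_K\subseteq W_{K/k_1}$. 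Modding out the extra piece $C_K$ then collapses the tower to $W_{K/k}/C_K\cong G$, which yields the desired isomorphism $W_{k_1/k}/\ker(\rho)\cong G$ of extensions of $\G$ by $H$. A more computational alternative would be to lift a set-theoretic section $s\colon\G\to G$ to a section $\tilde s\colon\G\to W_{k_1/k}$ and check directly that $(\sigma,\tau)\mapsto\rho(\tilde s(\sigma)\tilde s(\tau)\tilde s(\sigma\tau)^{-1})$ represents the extension class of (\ref{succesatta}); either way the technical heart is the compatibility of reciprocity maps along the tower $K\supseteq k_1\supseteq k$.
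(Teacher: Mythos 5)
The paper offers no internal argument for this statement: its ``proof'' is a citation of \cite[Theorem 2.110]{ParshinShafarevich}, so there is nothing to compare your proposal against line by line. What you have written is a correct reconstruction of the standard Weil-group proof of the Shafarevich--Weil theorem, which is precisely the argument underlying the cited reference. The architecture is right: the $\G$-equivariance of $(\ ,K/k_1)$ via compatibility of reciprocity with Galois conjugation (and your observation that the conjugation action on $H$ is independent of the lift because $H$ is abelian, hence equals $\mu(\sigma)$, is the correct justification); the identification of the fundamental class with the class of $1\to C_{k_1}\to W_{k_1/k}\to\G\to 1$; the computation $\ker(\ ,K/k_1)=N_{K/k_1}C_K$; and the collapse of the tower, where modding $W_{K/k}$ by $C_K$ and by $\overline{[W_{K/k_1},W_{K/k_1}]}$ (whose image in $G$ is trivial since $H$ is abelian) yields $W_{k_1/k}/\ker\rho\cong G$ as an extension of $\G$ by $H$. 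Two remarks. First, the pushout description $\rho_\ast(u)=[\,W_{k_1/k}/\ker\rho\,]$ implicitly uses that $\rho$ is surjective onto $H=\gal(K/k_1)$; this is true but worth stating, since otherwise $C_{k_1}/\ker\rho$ would only be a subgroup of $H$. Second, the steps you defer --- the existence of the Weil groups with the stated functorialities, in particular the tower isomorphism and the fact that under $W_{K/k_1}^{\mathrm{ab}}\cong C_{k_1}$ the image of $C_K$ is exactly $N_{K/k_1}C_K$ --- are themselves the substantial content of the theorem, so your text is a correct roadmap rather than a self-contained proof; given that the paper itself outsources the entire statement to the literature, this level of detail is appropriate, and what each approach ``buys'' is simply that yours makes visible the mechanism (pushforward of the Weil extension) that the authors use only as a black box.
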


\begin{proof}
See \cite[Theorem 2.110]{ParshinShafarevich}.
\end{proof}

Using Theorem \ref{ShafarevichWeil} we can improve the result of \cite[Proposition 3.13]{Cobbe1}. In that paper, the action $\mu$ was assumed to be such that $H^2(\G,H)$ is trivial. This assumption is satisfied in some particular cases (for example when the orders of $H$ and $\G$ are coprime, by a theorem of Schur-Zassenhaus), but of course not in general. We are going to use Theorem \ref{ShafarevichWeil} to study groups $G$ of the form $H\rtimes_\mu \G$, where $H^2(\G,H)$ is no more assumed to be trivial. The proof will work exactly as in \cite{Cobbe1}, except for some details, which are pointed out in what follows; for the unchanged parts we will refer the reader to \cite{Cobbe1}.
\\

From now on, in this section, $G$ will be of the form $H\rtimes_\mu \G$, where $H$ is an abelian group of \emph{odd} order.

\begin{lemma}\label{cobbia310}
Let $s$ be a positive integer and let $k_1$ be a tame $\G$-Galois extension of $k$ satisfying $P(s)$. Then there exists a tame $H$-Galois extension $K$ of $k_1$, such that $K/k$ is $G$-Galois and satisfying the following conditions.
\begin{enumerate}
\item The prime ideals ramifying in $K/k_1$ are principal. 
\item $\st(K/k_1)=1$.
\item $K/k_1$ satisfies $P(s)$.
\end{enumerate}
\end{lemma}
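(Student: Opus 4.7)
The plan is to cut out $K/k_1$ as the abelian extension corresponding to a carefully chosen idelic character $\chi\colon C_{k_1}\to H$ and then invoke Shafarevich--Weil (Theorem \ref{ShafarevichWeil}) to ensure that the resulting Galois group $\gal(K/k)$ is isomorphic to $G=H\rtimes_\mu\G$ rather than to some nonsplit extension of $\G$ by $H$. This is the same blueprint as \cite[Proposition 3.13]{Cobbe1}, but since we no longer assume $H^2(\G,H)=0$, the vanishing of the Shafarevich--Weil obstruction must be enforced instead of being automatic.

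First I would fix a finite $\G$-stable set $T$ of prime ideals of $k_1$ satisfying: each $\p\in T$ is principal, has residue characteristic coprime to $s\cdot d(k_1/\Q)\cdot\#H$, and has norm $N\p\equiv 1\pmod{\#H}$. Such a $T$, containing any prescribed number of $\G$-orbits, exists by Chebotarev applied to a suitable finite extension of $k_1$ that contains its Hilbert class field, $k_1(\zeta_{\#H})$ and $k_1(\zeta_s)$. For each $\G$-orbit in $T$ I would use local class field theory to pick a tame character of the local units into $H$ at one prime of the orbit, then propagate it by $\mu$ to the other primes of the orbit. Assembling these local pieces and applying global reciprocity yields an $H$-Galois abelian extension $K/k_1$, ramified exactly at $T$, with $K/k$ Galois and $\gal(K/k_1)\cong H$ as $\G$-module (via $\mu$).

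By Shafarevich--Weil the extension class of $1\to H\to\gal(K/k)\to\G\to 1$ in $H^2(\G,H)$ equals $\chi_*(u)$, where $u$ is the fundamental class of $H^2(\G,C_{k_1})$. The hard step is to arrange $\chi_*(u)=0$. I would do this by enlarging $T$ with additional $\G$-orbits consisting of primes totally inert in $k_1/k$ whose residue characteristic is still coprime to $\#H\cdot s\cdot d(k_1/\Q)$. At such an inert prime local class field theory (together with Shapiro's lemma) identifies the local contribution to $\chi_*(u)$ in terms of the local fundamental class and the freely chosen local character; as the latter varies and enough inert orbits are added, the total contribution can be made to cancel the initial value of $\chi_*(u)$. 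Once $\chi_*(u)=0$, Shafarevich--Weil yields $\gal(K/k)\cong G$.

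The remaining conclusions are straightforward. Property (1) follows from the principality built into the choice of $T$, since the primes ramifying in $K/k_1$ are exactly those of $T$. Property (2) follows because, $H$ having odd order, \cite[Theorem 2.1]{Cobbe1} identifies $\st(K/k_1)$ with the class of a product of integer powers of the principal primes in $T$. Property (3) follows from tameness of all local characters, surjectivity of $\chi$ (which kills nontrivial unramified subextensions), and coprimality of the conductor of $\chi$ to $d(k_1(\zeta_s)/\Q)$. The genuine new difficulty beyond \cite{Cobbe1} is the cohomological adjustment in the third paragraph --- finding inert orbits whose local contributions realise any prescribed class in $H^2(\G,H)$ --- and this is exactly where Shafarevich--Weil becomes essential.
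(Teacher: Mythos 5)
There is a genuine gap at exactly the step you yourself identify as the hard one. Your plan is to first build some $\G$-equivariant character $\chi$ and then kill the obstruction $\chi_*(u)\in H^2(\G,H)$ by adjoining auxiliary ramified orbits whose local contributions ``can be made to cancel'' the initial value. This cancellation is asserted, not proved: you would need the map sending your auxiliary local characters to their contributions in $H^2(\G,H)$ to hit the prescribed class, and nothing in the proposal establishes this. Worse, the auxiliary primes you propose do not exist in general: a prime totally inert in $k_1/k$ has cyclic decomposition group equal to $\G$, so such primes exist only when $\G$ is cyclic, whereas here $\G$ is an arbitrary finite group (of odd order). Even if one replaces ``inert'' by ``decomposition group a chosen cyclic subgroup'', the contribution of such a prime is (via Shapiro's lemma) a corestriction from a cyclic subgroup of $\G$, and classes obtained this way need not generate $H^2(\G,H)$; for instance, for an extraspecial extension of exponent $\ell$ the extension class restricts trivially to every cyclic subgroup of the quotient, which illustrates how badly cyclic subgroups can see $H^2$. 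So the argument as written would fail precisely where it departs from the routine part.

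The paper avoids this surjectivity problem entirely by working at the level of cocycles rather than cohomology classes. One fixes a cocycle $\tilde u$ representing the fundamental class $u\in H^2(\G,C_{k_1})$, fixes finitely many primes $\P_1,\dots,\P_t$ of $k_1$ whose classes generate $\cl(k_1)$, and writes each value $\tilde u(\rho_1,\rho_2)$ as a unit id\`ele $v_{\rho_1,\rho_2}$ times a product of powers of prime id\`eles supported at the $\P_j$. The construction of the $\G$-invariant character (the proof of \cite[Lemma 3.10]{Cobbe1} with $x=1$, enlarging the prescribed finite set of unit id\`eles by the $v_{\rho_1,\rho_2}$) then guarantees that all the id\`eles $\tilde u(\rho_1,\rho_2)$ lie in the kernel of the reciprocity map $(\;,K/k_1)$, so the image of the fundamental class is trivial already as a cocycle, and Theorem \ref{ShafarevichWeil} gives $\gal(K/k)\cong H\rtimes_\mu\G$ with no need to realise arbitrary classes of $H^2(\G,H)$ by local data. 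Your treatment of the remaining points (principal ramified primes, $\st(K/k_1)=1$ from oddness of $\#H$, and $P(s)$) is in line with the paper, but the proof cannot be accepted without a correct mechanism for trivialising the Shafarevich--Weil obstruction.
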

\begin{proof}
Choose a set $\{\P_1,\dots,\P_t\}$ of prime ideals in $k_1$ which are unramified over $\Q$ and whose classes generate $\cl(k_1)$.
Now recall the definition of the so-called content map:
\[\pi:I_{k_1}\to J_{k_1},\qquad 
\alpha\mapsto\prod_{\p\nmid\infty}\p^{v_\p(\alpha_\p)},\]
where $I_{k_1}$ and $J_{k_1}$ are the id\`ele group and the ideal group of $k_1$, respectively. For every $i\in\{1,\dots,t\}$,
let $\pi_{\P_i}$ be a prime element in the completion $(k_1)_{\P_i}$ and let $y_i$ be the id\`ele in $I_{k_1}$ whose component at $\P_i$ (resp. at any prime different from $\P_i$) is $\pi_{\P_i}$ (resp. $1$).

Let $\tilde u:\G\times\G\to C_{k_1}$ be a cocycle, whose class in $H^2(\G,C_{k_1})$ is the fundamental class $u$. Recall that we have an exact sequence
\[1\to\prod_\P U_\P/U_{k_1}\to C_{k_1}\stackrel{\pi}{\rightarrow} \cl(k_1)\to 1,\]
where $U_{k_1}$ is the group of units of $k_1$ and $\prod_\P U_\P\subseteq I_{k_1}$ is the group of unit id\`eles.
Therefore, for all couples $(\rho_1,\rho_2)\in\G\times\G$, we can find $v_{\rho_1,\rho_2}\in\prod_\P U_\P$ and $\lambda_{\rho_1,\rho_2,j}\in\Z$ such that $\tilde u(\rho_1,\rho_2)$ is represented by $v_{\rho_1,\rho_2}\prod_{j=1}^t y_j^{\lambda_{\rho_1,\rho_2,j}}$.

We add all the elements $v_{\rho_1,\rho_2}$ to the set $\{u_1,\dots,u_T\}$ defined in \cite{Cobbe1}, before Lemma 3.9. The arguments of \cite[Lemma 3.10]{Cobbe1}, with $x=1$, give a tame $H$-Galois extension $K$ of $k_1$, which is also Galois over $k$, and such that the action of $\G$ on $H$ induced by the conjugation is $\mu$. Further, by construction, the kernel of the reciprocity map $(\ ,K/k_1):C_{k_1}\to \gal(K/k_1)=H$ contains all the elements $\tilde u(\rho_1,\rho_2)$ with $\rho_1,\rho_2\in\G$. Therefore the image of the fundamental class by the reciprocity map corresponding to the extension $K/k_1$ must be trivial and so, by Theorem \ref{ShafarevichWeil}, the Galois group of $K/k$ must be isomorphic to $H\rtimes_\mu \G$.

Finally, the construction of the proof of \cite[Lemma 3.10]{Cobbe1} implies that the primes $\q_1,\dots,\q_{r+1}$ of $k_1$ ramifying in $K/k_1$ are in the class of $x=1$, hence principal. Since $H$ is of odd order, we deduce that $\st(K/k_1)=1$, i.e. we have proved conditions 1 and 2. As for condition 3, it follows, once more by the proof of \cite[Lemma 3.10]{Cobbe1}, that $K/k_1$ has no nontrivial unramified subextensions and that the $\q_i$ can be chosen so that their restrictions to $\Q$ are unramified in $k_1(\zeta_s)/\Q$.

\end{proof}

\begin{lemma}\label{cobbia311}
Let $s$ be a multiple of $\#H$ and let $k_1$ be a tame $\G$-Galois extension of $k$ satisfying $P(s)$. For every prime number $\ell$ dividing $\#H$ and every $\tau\in H\{\ell\}^*$, let $x_\tau$ be any class in $W(k,E_{k,G,\tau})$ and let $A_\tau, B_\tau$ be nonnegative integers, with $A_\tau\neq1$. Then there exists a tame $H$-Galois extension $K$ of $k_1$, such that $K/k$ is $G$-Galois and satisfying the following conditions.
\begin{enumerate}
\item The only nonprincipal prime ideals of $k$ ramifying in $K/k_1$ are (for every $\ell$ dividing $\#H$ and every $\tau\in H\{\ell\}^*$):
\begin{enumerate}
\item[(a)]$\q_{\tau,1},\q_{\tau,2},\dots,\q_{\tau,A_\tau}$, in the class of $x_\tau$ and such that, for any $i=1,\dots,A_\tau$, the inertia group of every prime of $K$ dividing $\q_{\tau,i}$ is a conjugate of $\langle\tau^{\frac{o(\tau)}{\ell}}\rangle$;
\item[(b)]$\q_{\tau,A_\tau+1},\q_{\tau,A_\tau+2}$, in the class of $x_\tau^{B_\tau}$ and such that the inertia group of every prime of $K$ dividing $\q_{\tau,A_\tau+1}$ and $\q_{\tau,A_\tau+2}$ is a conjugate of $\langle\tau\rangle$.
\end{enumerate}
\item We have 
\[\st(K/k_1)=\prod_{\ell|\#H}\prod_{\tau\in H\{\ell\}^*}\iota(x_\tau)^{A_\tau\frac{\ell-1}{2}\frac{\#H}{\ell}+B_\tau(o(\tau)-1)\frac{\#H}{o(\tau)}},\] where $\iota:\cl(k)\to\cl(k_1)$ is induced by the inclusion $k\subseteq k_1$.
\item $K/k_1$ satisfies $P(s)$.
\end{enumerate}
\end{lemma}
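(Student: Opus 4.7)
The plan is to run the construction of \cite[Lemma 3.11]{Cobbe1} (which builds tame $H$-extensions of $k_1$ with prescribed ramification and Steinitz class) and modify it in exactly the same way that Lemma \ref{cobbia310} modifies \cite[Lemma 3.10]{Cobbe1}: namely, we enlarge the set of idèles that are forced into the kernel of the reciprocity map $C_{k_1}\to H$ by adjoining a full system of cocycle values $v_{\rho_1,\rho_2}$ representing the fundamental class $u\in H^2(\G,C_{k_1})$. By Theorem \ref{ShafarevichWeil}, this forces $\gal(K/k)\cong H\rtimes_\mu\G$ rather than merely some extension of $\G$ by $H$ realising $\mu$.

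More concretely, I would proceed as follows. First, for each $\ell\mid\#H$ and each $\tau\in H\{\ell\}^*$, using the hypothesis $x_\tau\in W(k,E_{k,G,\tau})$ together with Chebotarev's theorem, select primes $\q_{\tau,1},\dots,\q_{\tau,A_\tau}$ of $k$ in the class of $x_\tau$ and $\q_{\tau,A_\tau+1},\q_{\tau,A_\tau+2}$ in the class of $x_\tau^{B_\tau}$, all lying in the image of the norm from $E_{k,G,\tau}$ (so that their decomposition data are compatible with $N_G(\tau)$, as in the proof of Proposition \ref{primiramificati}), lying over rational primes unramified in $k_1(\zeta_s)/\Q$, disjoint from the primes ramified in $k_1/k$, and pairwise distinct as $(\tau,i)$ varies. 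Second, apply the idèlic construction of \cite[Lemma 3.11]{Cobbe1} on $C_{k_1}$: at each prime $\Q$ of $k_1$ above some $\q_{\tau,i}$, use local reciprocity to force the tame inertia generator to map to a chosen $\G$-conjugate of $\tau^{o(\tau)/\ell}$ (for $i\le A_\tau$), respectively of $\tau$ (for $i\in\{A_\tau+1,A_\tau+2\}$), so that the inertia subgroup becomes a conjugate of $\langle\tau^{o(\tau)/\ell}\rangle$, respectively $\langle\tau\rangle$. At the same time, include in the kernel the global units $u_1,\dots,u_T$ of \cite{Cobbe1} (which ensure the kernel is $\G$-stable, hence that $K/k$ is Galois) and the cocycle representatives $v_{\rho_1,\rho_2}$, invoking Theorem \ref{ShafarevichWeil} to conclude $\gal(K/k)\cong H\rtimes_\mu\G$.

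Third, compute the Steinitz class. Since $\#H$ is odd, $\st(K/k_1)$ is the class in $\cl(k_1)$ of an ideal whose square is $d(K/k_1)=\prod_\Q\Q^{(e_\Q-1)\#H/e_\Q}$. Each $\q_{\tau,i}$ with $i\le A_\tau$ is unramified in $k_1/k$, so $\q_{\tau,i}\oo_{k_1}$ is a product of distinct primes all of ramification index $\ell$ in $K/k_1$, yielding a contribution $\iota(x_\tau)^{\frac{\ell-1}{2}\frac{\#H}{\ell}}$ to $\st(K/k_1)$; summed over $i=1,\dots,A_\tau$ this gives $\iota(x_\tau)^{A_\tau\frac{\ell-1}{2}\frac{\#H}{\ell}}$. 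Similarly, the two primes $\q_{\tau,A_\tau+1},\q_{\tau,A_\tau+2}$, each with ramification index $o(\tau)$, together contribute $\iota(x_\tau)^{B_\tau(o(\tau)-1)\frac{\#H}{o(\tau)}}$, while the remaining (principal) primes of the construction contribute trivially. This matches the formula in condition 2. Condition 3 is immediate from the choice made in the first step, exactly as in Lemma \ref{cobbia310}.

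The main obstacle will be checking that the two sets of constraints imposed on the reciprocity map — the prescribed local images at each $\q_{\tau,i}$ (which pin down the inertia subgroups) and the global cocycle condition coming from Shafarevich-Weil (which pins down the group extension) — are jointly consistent. This should work as in Lemma \ref{cobbia310}: the idèles $y_j$ associated to a set of generators $\P_1,\dots,\P_t$ of $\cl(k_1)$ can be chosen supported away from the $\q_{\tau,i}$, and the $v_{\rho_1,\rho_2}$ are then products of $y_j$-powers and unit idèles, so that adjoining them to the kernel imposes only global relations compatible with the prescribed local data. The technical restriction $A_\tau\neq 1$ is exactly what guarantees sufficient flexibility to absorb these extra relations, as in \cite[Lemma 3.11]{Cobbe1}.
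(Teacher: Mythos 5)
Your proposal follows essentially the same route as the paper: it runs the idèlic construction of \cite[Lemma 3.11]{Cobbe1} (choosing the $\q_{\tau,i}$ via Chebotarev in the prescribed classes, using $x_\tau\in W(k,E_{k,G,\tau})$ to make the local conditions $D(k_1/k,\cdot)$-invariant, and reading off the Steinitz class from the discriminant), modified exactly as in Lemma \ref{cobbia310} by forcing the cocycle representatives $v_{\rho_1,\rho_2}$ of the fundamental class into the kernel of the reciprocity map so that Theorem \ref{ShafarevichWeil} yields $\gal(K/k)\cong H\rtimes_\mu\G$. This is the paper's proof (which organizes the same data as a twist $\varphi\cdot(\ ,\tilde K/k_1)$ of the auxiliary extension $\tilde K$ furnished by Lemma \ref{cobbia310}), so the proposal is correct.
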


\begin{proof}
By Lemma \ref{cobbia310} there exists a tame $H$-Galois extension $\tilde K$ of $k_1$, such that $\tilde K/k$ is $G$-Galois, $\tilde K/k_1$ is ramified only at principal ideals and it satisfies $P(s)$. 

Since $k_1/k$ satisfies $P(s)$, by \cite[Lemma 3.4]{Cobbe1} and Lemma \ref{primolemmaE}, we have for every $\tau\in H^*$ that $W(k,E_{k,G,\tau})=W(k,Z_{k_1/k,\mu,\tau})$, where $Z_{k_1/k,\mu,\tau}$ has been defined in \cite{Cobbe1}. This observation will allow us to argue as in \cite[Lemma 3.11]{Cobbe1}, but replacing $W(k,Z_{k_1/k,\mu,\tau})$ with $W(k,E_{k,G,\tau})$.

As in the proof of Lemma \ref{cobbia310}, we add all the $v_{\rho_1,\rho_2}$ (defined in the proof of Lemma \ref{cobbia310}) to the set $\{u_1,\dots,u_T\}$ defined in \cite{Cobbe1}, before Lemma 3.9.
Now, for every prime number $\ell$ dividing $\#H$, for every $\tau\in H\{\ell\}^*$ and for every $i = 1,\ldots,A_\tau$ (resp. $i=A_{\tau+1},A_{\tau+2}$) we choose prime ideals $\q_{\tau,i}$ in the class of $x_\tau$ (resp. in the class of $x_\tau^{B_\tau}$). For any $i=1,\dots,A_\tau+2$, we also choose prime ideals $\qq_{\tau,i}$ in $k_1$ dividing the $\q_{\tau,i}$ and generators $g_{\qq_{\tau,i}}$ of the units of the residue fields $\kappa_{\qq_{\tau,i}}$. As in the proof of \cite[Lemma 3.11]{Cobbe1}, we can assume that the $\q_{\tau,i}$ are unramified in $k_1(\zeta_s)/\Q$ and in $\tilde K/k_1$.

For every prime number $\ell$ dividing $\#H$, for every $\tau\in H\{\ell\}^*$ and for every $i=1,\dots,A_\tau+2$, we define $\varphi_{\tau,i}:\kappa_{\qq_{\tau,i}}^\times\to H$ as follows. If $A_\tau$ is odd, we set
\[\varphi_{\tau,i}(g_{\qq_{\tau,i}})=
\begin{cases}\tau^\frac{o(\tau)}{\ell}&\text{if $i=1,2$}\\
\tau^{-\frac{2o(\tau)}{\ell}}&\text{if $i=3$}\\
\tau^{(-1)^i\frac{o(\tau)}{\ell}}&\text{if $4\leq i\leq A_\tau$}\\
\tau^{(-1)^i}&\text{if $i=A_\tau+1,A_\tau+2$}.
\end{cases}\]
If $A_\tau$ is even, we set 
\[\varphi_{\tau,i}(g_{\qq_{\tau,i}})=
\begin{cases}
\tau^{(-1)^i\frac{o(\tau)}{\ell}}&\text{if $1\leq i\leq A_\tau$}\\
\tau^{(-1)^i}&\text{if $i=A_\tau+1,A_\tau+2$}.
\end{cases}\]
As in the proof of \cite[Lemma 3.11]{Cobbe1}, using that $x_\tau\in W(k,E_{k,G,\tau})$, we can assume that the $\qq_{\tau,i}$ and the $g_{\qq_{\tau,i}}$ are such that the $\varphi_{\tau,i}$ are $D(k_1/k,\qq_{\tau,i})$-invariant.
Hence, for $i=1,\dots,A_\tau+2$, we extend $\varphi_{\tau,i}$ to a $\G$-invariant homomorphism
\[\tilde\varphi_{\tau,i}:\ind^{\G}_{D(k_1/k,\qq_{\tau,i})}\kappa_{\qq_{\tau,i}}^\times\cong\prod_{\delta\in\G/D(k_1/k,\qq_{\tau,i})}\kappa_{\delta(\qq_{\tau,i})}^\times\to H,\]
using the Frobenius reciprocity. We define $\varphi_0:\prod_{\P}\kappa_{\P}^\times\to H$, setting
\[\begin{cases}
\varphi_0|_{\kappa_{\delta(\qq_{\tau,i})}^\times}=\tilde\varphi_{\tau,i} &\text{for $\ell|\#H$, $\tau\in H\{\ell\}^*$, $i=1,\dots,A_\tau+2$  and $\delta\in\G$}\\
\varphi_0|_{\kappa_{\P}^\times}=1 &\text{for all the other primes}.
\end{cases}\]
As in the proof of \cite[Lemma 3.10]{Cobbe1}, we can assume that all the elements $u_j$ are in the kernel of $\varphi_0$, so that we can extend $\varphi_0$ to a $\G$-invariant homomorphism $\varphi:C_{k_1}\to H$, whose kernel contains a congruence subgroup of $C_{k_1}$. Therefore taking the product
\[\varphi\cdot (\ ,\tilde K/k_1):C_{k_1}\to H,\]
we get a $\G$-invariant homomorphism whose kernel contains a congruence subgroup and which is surjective, by the assumptions on the ideals $\q_{\tau,i}$.

By class field theory (see for example \cite[Theorem 2.3 and Proposition 2,4]{Cobbe1}), we obtain a tame $H$-Galois extension $K$ of $k_1$, which is Galois over $k$. The extension $K/k_1$ satisfies $P(s)$, by the assumptions on the prime ideals $\q_{\tau,i}$ and the fact that $\tilde K/k_1$ satisfies $P(s)$. As in the proof of Lemma \ref{cobbia310}, actually $\gal(K/k)\cong G$. All the requested properties hold, by construction, once more as in the proof of \cite[Lemma 3.11]{Cobbe1}.
\end{proof}

For any $\G$-Galois extension $k_1$ of $k$, we define $\rt(k_1,k,G)\subseteq \cl(k_1)$ as the set of those ideal classes of $k_1$ which are Steinitz classes of a tamely ramified $H$-Galois extension $K/k_1$, such that $K/k$ is $G$-Galois.

\begin{lemma}\label{cobbia312}
Let $s$ be a multiple of $\#H$ and let $k_1$ be a tame $\G$-Galois extension of $k$ satisfying $P(s)$. Then
\[\prod_{\ell|\#H}\prod_{\tau\in H\{\ell\}^*}\iota\left(W\left(k,E_{k,G,\tau}\right)\right)^{\frac{\ell-1}{2}\frac{\#H}{o(\tau)}}\subseteq \rt(k_1,k,G),\]
where $\iota:\cl(k)\to \cl(k_1)$ is induced by the inclusion $k\subseteq k_1$. Further any class in the left-hand side of the above inclusion is the Steinitz class of a tame $H$-Galois extension $K/k_1$ which satisfies $P(s)$ and such that $K/k$ is $G$-Galois.
\end{lemma}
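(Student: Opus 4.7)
The plan is to apply Lemma \ref{cobbia311} exactly once, with parameters $(A_\tau,B_\tau,x_\tau)_{\tau}$ carefully chosen so that the resulting Steinitz class equals an arbitrarily prescribed target in the left-hand side.

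First, I would pick a target
\[
c = \prod_{\ell\mid\#H}\prod_{\tau\in H\{\ell\}^*}\iota(y_\tau)^{\frac{\ell-1}{2}\frac{\#H}{o(\tau)}},\qquad y_\tau\in W(k,E_{k,G,\tau}),
\]
and regroup by cyclic subgroups of $H$ of prime-power order. By Lemma \ref{generalostesso}, all generators $\tau$ of a cyclic subgroup $\langle\sigma\rangle\subseteq H$ of order $\ell^j$ share the same field $E_{k,G,\tau}=E_{k,G,\sigma}$ and the same exponent $\frac{(\ell-1)\#H}{2\ell^j}$. Setting $Y_\sigma:=\prod_\tau y_\tau\in W(k,E_{k,G,\sigma})$, the joint contribution of these generators to $c$ collapses to $c_{\langle\sigma\rangle}:=\iota(Y_\sigma)^{(\ell-1)\#H/(2\ell^j)}$, and $c=\prod_{\langle\sigma\rangle}c_{\langle\sigma\rangle}$.

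Next, for each cyclic subgroup $\langle\sigma\rangle$ of order $\ell^j$, I would select two or three distinct generators $\tau_1,\tau_2$ (and $\tau_3$ whenever $j\geq 2$) and assign $(A_{\tau_1},B_{\tau_1})=(2,0)$, $(A_{\tau_2},B_{\tau_2})=(3,0)$, $(A_{\tau_3},B_{\tau_3})=(0,1)$. The corresponding exponents appearing in the formula of Lemma \ref{cobbia311} are
\[
\tfrac{(\ell-1)\#H}{\ell},\qquad\tfrac{3(\ell-1)\#H}{2\ell},\qquad\tfrac{(\ell^j-1)\#H}{\ell^j};
\]
factoring out $\frac{\#H}{2\ell^j}$ and using that $\ell$ is odd, $\gcd(\ell^{j-1},2(\ell^j-1))=1$ and $\ell-1\mid\ell^j-1$, one checks that their greatest common divisor is exactly $\frac{(\ell-1)\#H}{2\ell^j}$, i.e.\ the target exponent. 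Enough generators exist because $\phi(\ell^j)\geq 3$ except when $(\ell,j)=(3,1)$, and in that exceptional case only two generators are available but type (a) and type (b) primes have the same inertia, so the first two assignments already give the gcd $\#H/3$. For every other $\tau\in H\{\ell\}^*$ I would set $A_\tau=B_\tau=0$ and $x_\tau=1$, making their contribution to the Steinitz class trivial.

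Since the gcd of the selected exponents coincides with the target exponent, Bezout inside $\iota(W(k,E_{k,G,\sigma}))\subseteq\cl(k_1)$ yields elements $z_{\tau_i}\in W(k,E_{k,G,\sigma})$ with $\prod_i\iota(z_{\tau_i})^{e_{\tau_i}}=c_{\langle\sigma\rangle}$; these serve as the $x_{\tau_i}$'s in Lemma \ref{cobbia311}. Feeding all these data into that lemma yields a tame $H$-Galois extension $K/k_1$ such that $K/k$ is $G$-Galois, $\st(K/k_1)=\prod_{\langle\sigma\rangle}c_{\langle\sigma\rangle}=c$, and $K/k_1$ satisfies $P(s)$ by condition 3 of the lemma. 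The hard part is the restriction $A_\tau\neq 1$ built into Lemma \ref{cobbia311}, which rules out directly producing the target exponent $\frac{(\ell-1)\#H}{2\ell^j}$ from the ramification data attached to a single generator of $\langle\sigma\rangle$; the maneuver of spreading this data across several generators of the same cyclic subgroup and reducing the effective exponent via the gcd/Bezout argument above is precisely what overcomes this obstacle.
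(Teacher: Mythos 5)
Your argument is correct: the gcd computation checks out (after dividing by the target exponent $\frac{\ell-1}{2}\frac{\#H}{\ell^j}$ the three exponents become $2\ell^{j-1}$, $3\ell^{j-1}$ and $2(1+\ell+\dots+\ell^{j-1})$, whose gcd is $1$ since $\ell$ is odd and $1+\ell+\dots+\ell^{j-1}\equiv 1\pmod{\ell}$), the required number of generators of each cyclic subgroup is always available, and absorbing the Bezout coefficients into the classes $z_{\tau_i}=Y_\sigma^{c_i}\in W(k,E_{k,G,\sigma})=W(k,E_{k,G,\tau_i})$ is legitimate because $W(k,\cdot)$ is a subgroup of $\cl(k)$ and Lemma \ref{generalostesso} identifies the fields attached to the various generators. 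The paper proceeds from the same starting point --- a single application of Lemma \ref{cobbia311} together with the gcd identity (\ref{gcd}), here in the form $\gcd\bigl(\frac{\ell-1}{2}\frac{\#H}{\ell},(o(\tau)-1)\frac{\#H}{o(\tau)}\bigr)=\frac{\ell-1}{2}\frac{\#H}{o(\tau)}$ --- but handles the obstruction $A_\tau\neq 1$ in a simpler way: for each $\tau$ \emph{individually} it chooses $A_\tau,B_\tau>1$ so that $A_\tau\frac{\ell-1}{2}\frac{\#H}{\ell}+B_\tau(o(\tau)-1)\frac{\#H}{o(\tau)}$ is congruent to the target exponent merely \emph{modulo} $o(x_\tau)$, which costs nothing since only the class $x_\tau^{(\cdot)}$ matters and one may freely add multiples of $o(x_\tau)$ to the Bezout coefficients. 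Your device of spreading the data over several generators of the same cyclic subgroup achieves an \emph{exact} integer identity at the price of some case distinctions ($j=1$ versus $j\geq 2$, counting generators); the paper's congruence trick avoids all of that, while your version has the minor aesthetic advantage of keeping the multiplicities $A_\tau,B_\tau$ absolutely bounded. Both are complete proofs.
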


\begin{proof}
Let
\[x=\prod_{\ell|\#H}\prod_{\tau\in H\{\ell\}^*}x_\tau^{\frac{\ell-1}{2}\frac{\#H}{o(\tau)}}\in \prod_{\ell|\#H}\prod_{\tau\in H\{\ell\}^*}W\left(k,E_{k,G,\tau}\right)^{\frac{\ell-1}{2}\frac{\#H}{o(\tau)}},\]
with $x_\tau\in W(k,E_{k,G,\tau})$.
Let $o(x_\tau)$ be the order of $x_\tau\in \cl(k)$. Since any prime $\ell$ dividing $\#H$ is odd, for any $\tau\in H\{\ell\}^*$, using (\ref{gcd}), we can find integers $A_\tau,B_\tau>1$ such that
\[A_\tau \frac{\ell-1}{2}\frac{\#H}{\ell}+B_\tau (o(\tau)-1)\frac{\# H}{o(\tau)}\equiv \frac{\ell-1}{2}\frac{\#H}{o(\tau)} \pmod{o(x_\tau)}\]
and, in particular, we have
\[x_\tau^{A_\tau \frac{\ell-1}{2}\frac{\#H}{\ell}+B_\tau (o(\tau)-1)\frac{\# H}{o(\tau)}}=x_\tau^{\frac{\ell-1}{2}\frac{\#H}{o(\tau)}}.\]
Hence we conclude by Lemma \ref{cobbia311}, with $A_\tau$ and $B_\tau$ as above.
\end{proof}

\begin{theorem}\label{constructiveinclusion} 
Let $k$ be a number field and let $\G$ be a finite group of odd order satisfying the second condition of Definition \ref{verygood}. Let $H$ be an abelian group of odd order, let $\mu:\G\to\aut(H)$ be an action of $\G$ on $H$ and let $G=H\rtimes_\mu\G$. Then
\[\rt(k,G)\supseteq\rt(k,\G)^{\#H} \prod_{\ell|\#H}\prod_{\tau\in H\{\ell\}^*}W\left(k,E_{k,G,\tau}\right)^{\frac{\ell-1}{2}\frac{\#G}{o(\tau)}}.\]
More precisely, for any class $x$ in the right-hand side of the above inclusion and any positive integer $s$, there exists a tame $G$-Galois extension $K/k$ with Steinitz class $x$ and satisfying $P(s)$.
\end{theorem}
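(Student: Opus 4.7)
The plan is to prove the stronger second claim by a tower construction $k\subseteq k_1\subseteq K$, where $k_1/k$ realizes the $y^{\#H}$ factor of $x$ and $K/k_1$ contributes the remaining product. Given $x$ in the right-hand side of the inclusion and a positive integer $s$, I first decompose
\[x=y^{\#H}\cdot\prod_{\ell|\#H}\prod_{\tau\in H\{\ell\}^*}x_\tau^{\frac{\ell-1}{2}\frac{\#G}{o(\tau)}}\]
for some $y\in\rt(k,\G)$ and some $x_\tau\in W(k,E_{k,G,\tau})$. I then set $s'=s\cdot\#H$: this is a multiple of $\#H$ and, since every prime dividing $d(k(\zeta_s)/\Q)$ also divides $d(k(\zeta_{s'})/\Q)$, the property $P(s')$ implies $P(s)$.

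Next, since $\G$ satisfies condition (2) of Definition \ref{verygood}, there exists a tame $\G$-Galois extension $k_1/k$ with $\st(k_1/k)=y$ and satisfying $P(s')$. I apply Lemma \ref{cobbia312} to $k_1/k$ with the integer $s'$, obtaining a tame $H$-Galois extension $K/k_1$ such that $\gal(K/k)\cong G$, $K/k_1$ satisfies $P(s')$, and
\[\st(K/k_1)=\prod_{\ell|\#H}\prod_{\tau\in H\{\ell\}^*}\iota(x_\tau)^{\frac{\ell-1}{2}\frac{\#H}{o(\tau)}},\]
where $\iota\colon\cl(k)\to\cl(k_1)$ is induced by the inclusion $k\subseteq k_1$. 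Using the tower formula for Steinitz classes
\[\st(K/k)=N_{k_1/k}(\st(K/k_1))\cdot\st(k_1/k)^{[K:k_1]}\]
(which, in our odd-order setting, follows immediately from $d(K/k)=N_{k_1/k}(d(K/k_1))\cdot d(k_1/k)^{\#H}$ and \cite[Theorem 2.1]{Cobbe1}) together with the identity $N_{k_1/k}(\iota(z))=z^{\#\G}$ valid for every $z\in\cl(k)$, I get
\[\st(K/k)=y^{\#H}\cdot\prod_{\ell|\#H}\prod_{\tau\in H\{\ell\}^*}x_\tau^{\#\G\cdot\frac{\ell-1}{2}\frac{\#H}{o(\tau)}}=x.\]

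It remains to check that $K/k$ satisfies $P(s)$. Tameness is preserved in towers of tame extensions, and coprimality of $d(K/k)$ with $d(k(\zeta_s)/\Q)$ follows from the discriminant tower formula together with $P(s')$ for both $k_1/k$ and $K/k_1$ and the implication $P(s')\Rightarrow P(s)$. For the absence of nontrivial unramified subextensions, suppose $E/k$ is an unramified subextension of $K/k$: then $E\cap k_1=k$ by $P(s')$ for $k_1/k$, while $Ek_1/k_1$ is unramified (being the translate of an unramified extension) and is contained in $K$, so $P(s')$ for $K/k_1$ forces $E\subseteq k_1$; combining, $E=k$.

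The main conceptual obstacle, already handled inside Lemmas \ref{cobbia310}--\ref{cobbia312}, is to guarantee that the abelian $H$-Galois extension $K/k_1$ built via class field theory actually assembles into a $G$-Galois extension of $k$ realizing the prescribed semidirect product structure, despite the fact that $H^2(\G,H)$ need not vanish. This is precisely where the Shafarevich--Weil theorem intervenes: by forcing the cocycle representatives of the fundamental class to lie in the kernel of the reciprocity map attached to $K/k_1$, its image in $H^2(\G,H)$ is trivial, so $\gal(K/k)\cong H\rtimes_\mu\G=G$.
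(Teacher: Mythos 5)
Your proposal is correct and follows essentially the same route as the paper: the same factorization $x=y^{\#H}z^{\#\G}$, the same two-step tower with $k_1/k$ supplied by condition (2) of Definition \ref{verygood} and $K/k_1$ supplied by Lemma \ref{cobbia312}, and the same tower formula for Steinitz classes. The only difference is that you make explicit two points the paper leaves implicit (replacing $s$ by a multiple $s'$ of $\#H$, and the verification that the composite $K/k$ itself satisfies $P(s)$), both of which you handle correctly.
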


\begin{proof}
We will prove directly the last assertion of the theorem. Note that it suffices to prove it for $s$ big enough. So let $s$ be a positive integer which is a multiple of the order of $H$. Let
\[x\in \rt(k,\G)^{\#H} \prod_{\ell|\#H}\prod_{\tau\in H\{\ell\}^*}W\left(k,E_{k,G,\tau}\right)^{\frac{\ell-1}{2}\frac{\#G}{o(\tau)}}\]
and write $x=y^{\#H}z^{\#\G}$ with $y\in  \rt(k,\G)$ and $z\in \prod_{\ell|\#H}\prod_{\tau\in H\{\ell\}^*}W\left(k,E_{k,G,\tau}\right)^{\frac{\ell-1}{2}\frac{\#H}{o(\tau)}}$. By the hypothesis on $\G$, we can find a tame $\G$-Galois extension $k_1/k$ with Steinitz class $y$ and which satisfies $P(s)$.

Further, by Lemma \ref{cobbia312}, there exists an $H$-Galois extension $K/k_1$, which satisfies $P(s)$, which is Galois over $k$, with Galois group $G$, and with $\st(K/k_1)=\iota(z)$, where $\iota:\cl(k)\to \cl(k_1)$ is induced by the inclusion $k\subseteq k_1$. Then, using a well-known formula relating the Steinitz classes in towers of extensions (see for example \cite[Proposition I.1.2]{Endo}), we get
\[\st(K/k)=\st(k_1/k)^{\#H}N_{k_1/k}(\st(K/k_1))=y^{\#H}N_{k_1/k}(\iota(z))=y^{\#H}z^{\#\G}=x,\]
where $N_{k_1/k}:\cl(k_1)\to\cl(k)$ is induced by the norm from $k_1$ to $k$. 
\end{proof}

\begin{remark} The techniques described in this section can be extended to groups of even order, as in \cite{Cobbe2}. We have restricted our attention to groups of odd order only to have cleaner proofs and statements (as already noticed, there are groups of even order which are not very good). However the complications arising from extensions of even degree vanish if we assume that the class number of the base field $k$ is odd.
\end{remark}
\end{section}

\begin{section}{$A'$-groups of odd order}\label{section4}
In this section we find some slightly more explicit descriptions for the results of \cite{Cobbe1}. In particular we prove that the answer to Question \ref{question2} is affirmative for the so-called $A'$-groups of odd order, whose definition we now recall.
\begin{definition}\label{A'groups}
We define $A'$-groups inductively:
\begin{enumerate}
\item Finite abelian groups are $A'$-groups.
\item If $\G$ is an $A'$-group and $H$ is a finite abelian group of order prime to that of $\G$, then $H\rtimes_\mu \G$ is an $A'$-group, for any action $\mu:\G\to\aut(H)$ of $\G$ on $H$.
\item If $\G_1$ and $\G_2$ are $A'$-groups, then $\G_1\times \G_2$ is an $A'$-group.
\end{enumerate}
\end{definition}

\begin{proposition}\label{semidirbello}
Let $\G$ be a very good group of odd order, let $H$ be an abelian group of odd order prime to that of $\G$ and let $\mu:\G\to\aut(H)$ be an action of $\G$ on $H$. Then $G=H\rtimes_\mu \G$ is very good. In particular abelian groups are very good.
\end{proposition}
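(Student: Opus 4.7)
The plan is to combine Theorem \ref{secondinclusion} with Theorem \ref{constructiveinclusion}, the main work being to rewrite $\W(k,G)$ so that it matches the right-hand side of the inclusion provided by the latter. Specifically, Theorem \ref{secondinclusion} already gives $\rt(k,G)\subseteq \W(k,G)$, so it suffices to prove the reverse inclusion together with the realizability statement of $P(s)$; both will come from Theorem \ref{constructiveinclusion} once we identify
\[
\W(k,G)=\rt(k,\G)^{\#H}\cdot\prod_{\ell\mid\#H}\prod_{\tau\in H\{\ell\}^*}W(k,E_{k,G,\tau})^{\frac{\ell-1}{2}\frac{\#G}{o(\tau)}}.
\]

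To obtain this identification, I would use the second form of $\W(k,G)$ in Definition \ref{WkG} and split the product over primes $\ell\mid\#G$ according to whether $\ell\mid\#H$ or $\ell\mid\#\G$ (these cases are disjoint because $\gcd(\#H,\#\G)=1$). For $\ell\mid\#H$, the abelian subgroup $H\{\ell\}$ is a Sylow $\ell$-subgroup of $G$ since $\ell\nmid\#\G$; Sylow's theorem then implies that every $\sigma\in G\{\ell\}^*$ is $G$-conjugate to an element of $H\{\ell\}^*$. Since conjugation preserves the order of elements and, by Lemma \ref{coniugati}, preserves $E_{k,G,\sigma}$, the corresponding factor of $\W(k,G)$ coincides as a subgroup of $\cl(k)$ with $\prod_{\tau\in H\{\ell\}^*}W(k,E_{k,G,\tau})^{\frac{\ell-1}{2}\frac{\#G}{o(\tau)}}$. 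For $\ell\mid\#\G$, a Sylow $\ell$-subgroup of $\G$ is a Sylow $\ell$-subgroup of $G$, so the analogous conjugation argument reduces the indexing set to $\G\{\ell\}^*$; then Lemma \ref{primolemmaE} gives $E_{k,G,\sigma}=E_{k,\G,\sigma}$ for $\sigma\in\G$, and pulling the factor $\#H=\#G/\#\G$ out of the exponent yields
\[
\prod_{\ell\mid\#\G}\prod_{\sigma\in G\{\ell\}^*}W(k,E_{k,G,\sigma})^{\frac{\ell-1}{2}\frac{\#G}{o(\sigma)}}=\W(k,\G)^{\#H}.
\]
Multiplying the two contributions gives the decomposition of $\W(k,G)$ claimed above.

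With this in hand, the assumption that $\G$ is very good gives $\rt(k,\G)=\W(k,\G)$ (applied to the same base field $k$), so the decomposition rewrites as the right-hand side of Theorem \ref{constructiveinclusion}. Since $G$ is of odd order and $\G$ satisfies the second condition of Definition \ref{verygood} (being very good), Theorem \ref{constructiveinclusion} delivers both the inclusion $\W(k,G)\subseteq\rt(k,G)$ and, for every $x\in\W(k,G)$ and every positive integer $s$, a tame $G$-Galois extension $K/k$ with Steinitz class $x$ satisfying $P(s)$. Combined with Theorem \ref{secondinclusion}, this shows $G$ is very good.

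The last assertion follows by taking $\G$ to be the trivial group, which is trivially very good (both $\rt(k,\{1\})$ and $\W(k,\{1\})$ are the trivial subgroup of $\cl(k)$, and $k/k$ satisfies $P(s)$ for every $s$): for an arbitrary abelian group $H$ of odd order we realize $H$ as $H\rtimes_\mu\{1\}$ with $\gcd(\#H,1)=1$ and apply the main statement. The only step that requires any care is the Sylow-based rewriting of $\W(k,G)$, but this is routine bookkeeping once the lemmas of Section \ref{second} are invoked; no new ideas are needed.
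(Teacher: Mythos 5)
Your proof is correct and follows essentially the same strategy as the paper: both sandwich $\rt(k,G)$ between $\W(k,G)$ (Theorem \ref{secondinclusion}) and the right-hand side of Theorem \ref{constructiveinclusion}, after identifying that right-hand side with $\W(k,G)$ by splitting the product in Definition \ref{WkG} over the primes dividing $\#H$ and those dividing $\#\G$. The only (harmless) difference is that for $\ell\mid\#\G$ you match the factors of $\W(k,G)$ with those of $\W(k,\G)^{\#H}$ via Sylow conjugacy combined with Lemmas \ref{coniugati} and \ref{primolemmaE}, whereas the paper obtains the needed inclusion by pushing $\sigma$ through the projection $G\to\G$ and comparing $H_{k,G,\sigma}$ with $H_{k,\G,\tilde\sigma}$; both routes are valid.
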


\begin{proof}
Let $\pi:G\to G/H\cong\G$ be the projection. For any $\ell|\#\G$ and any $\sigma\in G\{\ell\}$, there exists $\tilde\sigma\in\G\{\ell\}$ such that $\pi(\sigma)=\pi(\tilde\sigma)$. Further, since $\sigma$ and $\pi(\sigma)$ have the same order, it is straightforward to verify that
\[H_{k,G,\sigma}\subseteq H_{k,G/H,\pi(\sigma)}=H_{k,\G,\tilde\sigma}.\]
Hence
\[E_{k,G,\sigma}\supseteq E_{k,\G,\tilde\sigma}\]
and, by Lemma \ref{Wrovescia},
\[W(k,E_{k,G,\sigma})\subseteq W(k,E_{k,\G,\tilde\sigma}).\]
We also observe that for $\ell|\#H$, $H\{\ell\}=G\{\ell\}$. Using Theorem \ref{constructiveinclusion} (but actually also \cite[Theorem 3.19]{Cobbe1} would work in this case) and the hypothesis that $\G$ is very good, we then have:
\[\begin{split}\rt(k,G)&\supseteq\rt(k,\G)^{\#H} \prod_{\ell|\#H}\prod_{\tau\in H\{\ell\}^*}W\left(k,E_{k,G,\tau}\right)^{\frac{\ell-1}{2}\frac{\#G}{o(\tau)}}\\
&=\prod_{\ell|\#\G}\prod_{\sigma\in \G\{\ell\}^*}W(k,E_{k,\G,\sigma})^{\frac{\ell-1}{2}\frac{\#G}{o(\sigma)}}
\prod_{\ell|\#H}\prod_{\tau\in H\{\ell\}^*}W\left(k,E_{k,G,\tau}\right)^{\frac{\ell-1}{2}\frac{\#G}{o(\tau)}}\\
&\supseteq \prod_{\ell|\#G}\prod_{\sigma\in G\{\ell\}^*}W(k,E_{k,G,\sigma})^{\frac{\ell-1}{2}\frac{\#G}{o(\sigma)}}=\W(k,G).
\end{split}\]
The opposite inclusion is given by Theorem \ref{secondinclusion}. By Theorem \ref{constructiveinclusion} and the hypothesis that $\G$ is very good, also the second condition of Definition \ref{verygood} is satisfied by $G$.
\end{proof}

\begin{proposition}\label{dirbello}
Let $\G_1,\G_2$ be very good groups of odd order. Then $G=\G_1\times \G_2$ is very good.
\end{proposition}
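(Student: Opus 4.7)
The plan is to use Theorem \ref{secondinclusion} for the inclusion $\rt(k, G) \subseteq \W(k, G)$ and to establish the reverse inclusion, together with the second condition of Definition \ref{verygood}, by constructing a $G$-Galois extension of $k$ as the compositum of a $\G_1$-Galois and a $\G_2$-Galois extension of $k$ with coprime discriminants. No cohomological input is needed, since the ``extension'' in question simply splits as a direct product.

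The first step is to identify
\[\W(k, \G_1 \times \G_2) = \W(k, \G_1)^{\#\G_2} \cdot \W(k, \G_2)^{\#\G_1}.\]
Viewing $G$ as $\G_2 \rtimes_\mu \G_1$ with trivial $\mu$, Lemma \ref{primolemmaE} gives $E_{k,G,\sigma_1} = E_{k,\G_1,\sigma_1}$ for $\sigma_1 \in \G_1$, and symmetrically for $\G_2$, which yields the inclusion $\supseteq$ (keeping track of the exponent $\frac{\#G}{o(\sigma_i)} = \#\G_{3-i}\cdot\frac{\#\G_i}{o(\sigma_i)}$). For the inclusion $\subseteq$, the remaining contributions come from $\sigma = (\sigma_1, \sigma_2) \in G\{\ell\}^*$ with both coordinates nontrivial. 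A direct computation with normalizers in the direct product shows that $\varphi_\sigma(N_G(\sigma))$ is the intersection, inside $(\Z/o(\sigma)\Z)^\times$, of the pullbacks of the groups $\varphi_{\sigma_i}(N_{\G_i}(\sigma_i))$ along the natural projections $(\Z/o(\sigma)\Z)^\times \to (\Z/o(\sigma_i)\Z)^\times$, so $E_{k,G,\sigma} \supseteq E_{k,\G_i,\sigma_i}$ for each $i$ with $\sigma_i \neq 1$. By Lemma \ref{Wrovescia}, the contribution of such $\sigma$ to $\W(k, G)$ is absorbed into $\W(k,\G_i)^{\#\G_{3-i}}$ for a suitable $i$, using again that $\frac{\#G}{o(\sigma)}$ is a multiple of $\#\G_{3-i}\cdot\frac{\#\G_i}{o(\sigma_i)}$.

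The second step is the construction. Given $x \in \W(k, G)$, write $x = y^{\#\G_2} z^{\#\G_1}$ with $y \in \W(k, \G_1) = \rt(k, \G_1)$ and $z \in \W(k, \G_2) = \rt(k, \G_2)$, and fix a positive integer $s$. Since $\G_1$ is very good, one finds a tame $\G_1$-Galois extension $K_1/k$ with $\st(K_1/k) = y$ satisfying $P(s)$. Choose $s'$ divisible by $s$ and by every rational prime dividing the absolute norm of $d(K_1/k)$; any extension of $k$ satisfying $P(s')$ then has discriminant coprime to $d(K_1/k)$, since these primes ramify in $k(\zeta_{s'})/\Q$. Using that $\G_2$ is very good, produce a tame $\G_2$-Galois extension $K_2/k$ with $\st(K_2/k) = z$ satisfying $P(s')$. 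Standard inertia arguments (the intersection $K_1\cap K_2$ is Galois and unramified over $k$, hence trivial by the $P$-condition on $K_1$; the inertia subgroups of $K_1K_2/k$ sit inside $\G_1\times\{1\}$ and $\{1\}\times\G_2$, and already generate those factors, hence all of $G$) show that $K_1 K_2 / k$ is a tame $G$-Galois extension satisfying $P(s)$. The Steinitz-class-in-a-tower formula \cite[Proposition I.1.2]{Endo} together with the base-change identity $\st(K_1 K_2 / K_1) = \iota(z)$, where $\iota: \cl(k) \to \cl(K_1)$ is induced by inclusion, then gives
\[\st(K_1 K_2 / k) = y^{\#\G_2}\cdot N_{K_1/k}(\iota(z)) = y^{\#\G_2} z^{\#\G_1} = x,\]
proving both conditions of Definition \ref{verygood} simultaneously.

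The main obstacle lies in the first step: although the identity for $\W(k, \G_1 \times \G_2)$ is plausible, it rests on a careful description of the fields $E_{k,G,\sigma}$ attached to elements $\sigma = (\sigma_1,\sigma_2)$ with both coordinates nontrivial, which can be strictly larger than each $E_{k,\G_i,\sigma_i}$. Once this identity is in hand, the compositum construction is entirely classical and avoids the cohomological machinery of Section \ref{cohomol}, because the direct product splits unconditionally and disjoint ramification guarantees that the compositum realises the prescribed Galois group and Steinitz class.
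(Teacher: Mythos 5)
Your proposal is correct and follows essentially the same route as the paper: the same analysis of $E_{k,G,\sigma}$ for mixed elements $\sigma=(\sigma_1,\sigma_2)$ via normalizers in the direct product (reducing to the coordinate of maximal order), followed by the same compositum construction with ramification forced to be disjoint by enlarging $s$. The only cosmetic differences are that you upgrade the paper's inclusion $\W(k,G)\subseteq\rt(k,\G_1)^{\#\G_2}\rt(k,\G_2)^{\#\G_1}$ to an equality (harmless, and indeed true by Lemma \ref{primolemmaE}) and compute the Steinitz class of the compositum via the tower formula rather than directly from $d(K_1K_2/k)=d(K_1/k)^{\#\G_2}d(K_2/k)^{\#\G_1}$.
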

\begin{proof}
Let $\ell$ be a prime number, let $\sigma_1\in \G_1\{\ell\}$, $\sigma_2\in \G_2\{\ell\}$ be not both trivial. Then $o(\sigma_1\sigma_2)=\max\{o(\sigma_1),o(\sigma_2)\}>1$ and let us assume $o(\sigma_1\sigma_2)=o(\sigma_1)$. Clearly $N_G(\sigma_1\sigma_2)\subseteq N_G(\sigma_1)$ and, for every $\tau\in N_G(\sigma_1\sigma_2)$, we have
\[\varphi_{\sigma_1\sigma_2}(\tau)=\varphi_{\sigma_1}(\tau).\]
Therefore
\[H_{k,G,\sigma_1\sigma_2}\subseteq H_{k,G,\sigma_1},\]
i.e.
\[E_{k,G,\sigma_1\sigma_2}\supseteq E_{k,G,\sigma_1}.\]
Using Lemma \ref{Wrovescia} and Lemma \ref{primolemmaE}, we conclude that
\[W(k,E_{k,G,\sigma_1\sigma_2})^{\frac{\ell-1}{2}\frac{\#G}{o(\sigma_1\sigma_2)}}\subseteq W(k,E_{k,G,\sigma_1})^{\frac{\ell-1}{2}\frac{\#\G_1\cdot \#\G_2}{o(\sigma_1)}}=W(k,E_{k,\G_1,\sigma_1})^{\frac{\ell-1}{2}\frac{\#\G_1}{o(\sigma_1)}\#\G_2}\]
and an analogous result holds when $o(\sigma_1\sigma_2)=o(\sigma_2)$.
Hence, using the hypothesis that $\G_1$ and $\G_2$ are very good groups of odd order,
\[\begin{split}\W(k,G)&=\prod_{\ell|\#G}\prod_{\sigma\in G\{\ell\}^*}W(k,E_{k,G,\sigma})^{\frac{\ell-1}{2}\frac{\#G}{o(\sigma)}}
\\&\subseteq \prod_{\ell|\#\G_1}\prod_{\sigma_1\in\G_1\{\ell\}^*}\!\!\! W(k,E_{k,\G_1,\sigma_1})^{\frac{\ell-1}{2}\frac{\#\G_1}{o(\sigma_1)}\#\G_2}
\prod_{\ell|\#\G_2}\prod_{\sigma_2\in\G_2\{\ell\}^*} \!\!\! W(k,E_{k,\G_2,\sigma_2})^{\frac{\ell-1}{2}\frac{\#\G_2}{o(\sigma_2)}\#\G_1}
\\&=\rt(k,\G_1)^{\#\G_2}\rt(k,\G_2)^{\#\G_1}.\end{split}\]
In particular every class in $\W(k,G)$ is of the form $x_1^{\#\G_2}x_2^{\#\G_1}$, where $x_i\in\rt(k,\G_i)$ for $i=1,2$. Now let $s$ be a positive integer: then, since $\G_1$ is very good, there exists a tame $\G_1$-Galois extension $k_1/k$ with Steinitz class $x_1$ and satisfying $P(s)$. Further let $\tilde s$ be a multiple of $s$ such that all the primes ramifying in $k_1/k$ ramify also in $k(\zeta_{\tilde s})/k$. Now, since also $\G_2$ is very good, there exists a tame $\G_2$-Galois extension $k_2/k$ with Steinitz class $x_2$ and satisfying $P(\tilde s)$. The compositum $K=k_1k_2$ is a tame $G$-Galois extension of $k$, satisfying $P(s)$. Since $k_1$ and $k_2$ are linearly disjoint over $k$, we have $d(K/k)=d(k_1/k)^{\#\G_2}d(k_2/k)^{\#\G_1}$ and hence, by \cite[Theorem 2.1]{Cobbe1}, 
\[\st(K/k)=x_1^{\#\G_2}x_2^{\#\G_1},\]
since $\#\G_1$ and $\#\G_2$ are both odd.

Therefore $\W(k,G)\subseteq\rt(k,G)$ and the opposite inclusion is given by Theorem \ref{secondinclusion}. Actually the above construction also proves the second property characterizing very good groups.
\end{proof}

Putting things together, we obtain a refinement of \cite[Theorem 3.23]{Cobbe1}.
\begin{theorem}\label{A'ok}
Every $A'$-group of odd order is very good.
\end{theorem}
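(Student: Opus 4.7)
The plan is to prove the theorem by structural induction on the recursive construction of $A'$-groups given in Definition \ref{A'groups}, using the two preceding propositions as the inductive machinery. All the substantive work has already been done; what remains is essentially bookkeeping.

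For the base case (clause 1 of Definition \ref{A'groups}), I would invoke the final assertion of Proposition \ref{semidirbello}, which states that every abelian group is very good. (This is obtained from Proposition \ref{semidirbello} by taking $\G$ trivial, which is trivially very good, and $H$ an arbitrary abelian group of odd order.) For the inductive step, I would argue on the minimal number of construction steps needed to build $G$ from the recursive definition. If the outermost step is a semidirect product $G = H\rtimes_\mu \G$ with $\G$ an $A'$-group of odd order built from fewer steps and $H$ an abelian group of odd order coprime to $\#\G$, then by the inductive hypothesis $\G$ is very good, and Proposition \ref{semidirbello} directly yields that $G$ is very good. If instead the outermost step is a direct product $G = \G_1\times\G_2$ of $A'$-groups of odd order, then by the inductive hypothesis both $\G_1$ and $\G_2$ are very good, and Proposition \ref{dirbello} yields that $G$ is very good.

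Since clauses (1)--(3) of Definition \ref{A'groups} exhaust the recipes for constructing $A'$-groups, the induction is complete and every $A'$-group of odd order is very good. The main obstacle in this theorem is not in the induction itself but rather lies upstream: Proposition \ref{semidirbello} depends crucially on Theorem \ref{constructiveinclusion}, whose proof in Section \ref{cohomol} required the Shafarevich--Weil theorem to handle the case where $H^2(\G,H)$ need not vanish, and Proposition \ref{dirbello} relies on the linear-disjointness argument together with the discriminant/Steinitz formula \cite[Theorem 2.1]{Cobbe1} to combine the two factors. Once those tools are in place, Theorem \ref{A'ok} follows as a formal consequence.
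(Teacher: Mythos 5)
Your proposal is correct and matches the paper's argument exactly: the paper proves Theorem \ref{A'ok} by the same structural induction on Definition \ref{A'groups}, with the abelian base case and the semidirect-product step handled by Proposition \ref{semidirbello} and the direct-product step by Proposition \ref{dirbello}.
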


\begin{proof}
Obvious, by induction, using the above results.
\end{proof}

\end{section}

\begin{section}{Groups of order dividing $\ell^4$}\label{lgruppi}\label{section5}

In this section, $\ell$ is an odd prime. We want to show that the answer to Question \ref{question2} is affirmative for all nonabelian groups of order dividing $\ell^4$ (the abelian case is covered by Theorem \ref{A'ok}).
Further the same is true for all groups of order $\ell^n$ and exponent $\ell^{n-1}$, for any positive integer $n$ (these are actually the groups studied in \cite{Cobbe3}).

\begin{lemma}\label{Elgruppi}
Let $G$ be an $\ell$-group and let $\tau\in G^*$. Set
\[e_\tau=\frac{o(\tau)}{\# (N_G(\tau)/C_G(\tau))}.\]
Then $\ell|e_\tau$ and $E_{k,G,\tau}=k(\zeta_{e_\tau})$; in particular $\zeta_\ell\in E_{k,G,\tau}$.
\end{lemma}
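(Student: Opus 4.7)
The plan is to compute $\varphi_\tau(N_G(\tau))$ explicitly as a subgroup of $(\Z/o(\tau)\Z)^\times$ and then translate this into a statement about $H_{k,G,\tau}$ via the cyclotomic character.

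First I would write $o(\tau)=\ell^n$ with $n\geq 1$ (the inequality holds because $\tau\in G^*$), and observe that $\ker\varphi_\tau=C_G(\tau)$ by definition, so $\varphi_\tau(N_G(\tau))\cong N_G(\tau)/C_G(\tau)$ has order a power of $\ell$, say $\ell^m$. By definition of $e_\tau$ we then have $e_\tau=\ell^{n-m}$.

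Next, since $\ell$ is odd, $(\Z/\ell^n\Z)^\times$ is cyclic of order $\ell^{n-1}(\ell-1)$, and its (unique) $\ell$-Sylow subgroup is cyclic of order $\ell^{n-1}$. Being an $\ell$-subgroup of this cyclic group, $\varphi_\tau(N_G(\tau))$ is forced to lie in the Sylow, giving $m\leq n-1$ and thus $\ell\mid e_\tau$. Moreover, the $\ell$-Sylow is cyclic, so $\varphi_\tau(N_G(\tau))$ coincides with its unique subgroup of order $\ell^m$. The key identification I would then make is that this unique subgroup is precisely the kernel of the reduction map
\[\pi\colon(\Z/\ell^n\Z)^\times\to(\Z/\ell^{n-m}\Z)^\times=(\Z/e_\tau\Z)^\times,\]
which is clear by a cardinality count once one notes that $\ker\pi$ is itself an $\ell$-subgroup of order $\ell^m$ inside the cyclic Sylow.

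Finally, the standard compatibility between $\nu_{k,\tau}$ and $\nu_{k,\zeta_{e_\tau}}$ under restriction says that the diagram
\[\xymatrix{\gal(k(\zeta_{o(\tau)})/k)\ar[r]^-{\nu_{k,\tau}}\ar[d]_{\res}&(\Z/o(\tau)\Z)^\times\ar[d]^{\pi}\\ \gal(k(\zeta_{e_\tau})/k)\ar[r]^-{\nu_{k,\zeta_{e_\tau}}}&(\Z/e_\tau\Z)^\times}\]
commutes (this is already essentially recorded in the proof of Lemma \ref{inclEpotsigma}). Since $\nu_{k,\tau}$ is injective and both $\nu$'s land in their respective cyclotomic groups, taking preimages gives $H_{k,G,\tau}=\nu_{k,\tau}^{-1}(\ker\pi)=\ker(\res)=\gal(k(\zeta_{o(\tau)})/k(\zeta_{e_\tau}))$. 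Passing to fixed fields yields $E_{k,G,\tau}=k(\zeta_{e_\tau})$, and $\ell\mid e_\tau$ gives $\zeta_\ell\in E_{k,G,\tau}$. The only point that requires attention is the identification of $\varphi_\tau(N_G(\tau))$ with $\ker\pi$ inside $(\Z/\ell^n\Z)^\times$; once this is pinned down, everything else is a direct translation via Galois theory.
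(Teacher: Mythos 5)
Your proposal is correct and follows essentially the same route as the paper: both identify $\varphi_\tau(N_G(\tau))$ with the unique subgroup of order $\#(N_G(\tau)/C_G(\tau))$ of the cyclic $\ell$-Sylow of $(\Z/o(\tau)\Z)^\times$, recognize it as $1+e_\tau\Z \pmod{o(\tau)}$ (the kernel of reduction mod $e_\tau$), and pull back through the cyclotomic character to get $H_{k,G,\tau}=\gal(k(\zeta_{o(\tau)})/k(\zeta_{e_\tau}))$. Your cardinality count for the identification with $\ker\pi$ is just a slightly more explicit version of what the paper asserts directly.
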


\begin{proof}
Since the kernel of $\varphi_\tau$ is the centralizer $C_G(\tau)$, $\varphi_\tau(N_G(\tau))$ is the subgroup of order $\# (N_G(\tau)/C_G(\tau))$ of the $\ell$-Sylow subgroup of the cyclic group $(\Z/o(\tau)\Z)^\times$. This already gives $\ell|e_\tau$ and we also have
\[\varphi_\tau(N_G(\tau))=1+\frac{o(\tau)}{\# (N_G(\tau)/C_G(\tau))}\Z\pmod{o(\tau)}=1+e_\tau\Z\pmod{o(\tau)}.\]
Therefore
\[H_{k,G,\tau}=\gal(k(\zeta_{o(\tau)})/k(\zeta_{e_\tau}))\]
and
\[E_{k,G,\tau}=k(\zeta_{e_\tau})\supseteq k(\zeta_\ell).\]
\end{proof}

\begin{proposition}[Burnside]\label{norabemax}
A group of order $\ell^n$, whose center has order $\ell^c$, contains a normal abelian subgroup of order $\ell^\nu$ where
\[\nu\geq -\frac{1}{2}+\sqrt{2n+c^2-c+\frac{1}{4}}.\]
In particular, for $n=3,4$ this becomes
\[\nu\geq n-1.\]
\end{proposition}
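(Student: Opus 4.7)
My plan is to bound the index in $G$ of a maximal normal abelian subgroup, following the classical Burnside strategy. Let $A\trianglelefteq G$ be a maximal normal abelian subgroup, and put $|A|=\ell^\nu$. The first step is to show that $C_G(A)=A$: if not, $C_G(A)/A$ would be a nontrivial normal subgroup of the $\ell$-group $G/A$, and would therefore meet $Z(G/A)$ nontrivially; lifting a nonidentity element of this intersection would give an element $x\notin A$ with $\langle A,x\rangle$ normal in $G$ and abelian, contradicting maximality. Since $Z(G)\subseteq C_G(A)=A$ we have $|Z(G)|=\ell^c\leq \ell^\nu$, and the conjugation action embeds $G/A$ as an $\ell$-subgroup of $\aut(A)$ whose image fixes $Z(G)$ pointwise.

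The core of the argument is then the arithmetic estimate
\[
|P|\leq \ell^{(\nu-c)(\nu+c-1)/2}
\]
for any $\ell$-subgroup $P\leq \aut(A)$ that fixes a subgroup of order $\ell^c$ pointwise. In the elementary abelian case $A\cong \F_\ell^\nu$, choosing a basis whose first $c$ elements span the fixed subgroup writes every element of $P$ in block form $\bigl(\begin{smallmatrix}I_c & B\\ 0 & U\end{smallmatrix}\bigr)$ with $U$ in a Sylow $\ell$-subgroup of $GL_{\nu-c}(\F_\ell)$ (i.e.\ upper unitriangular), giving the count $\ell^{c(\nu-c)+\binom{\nu-c}{2}}=\ell^{(\nu-c)(\nu+c-1)/2}$. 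For a general abelian $\ell$-group $A$, the same bound is obtained by running the argument on each subquotient of the $\aut(A)$-invariant filtration $A\supseteq \ell A\supseteq \ell^2A\supseteq\cdots$ and adding up the exponents.

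Combining $|G/A|=\ell^{n-\nu}$ with the above bound yields $n-\nu\leq (\nu-c)(\nu+c-1)/2$, which rearranges to $\nu(\nu+1)\geq 2n+c(c-1)$, and completing the square gives $(\nu+\tfrac{1}{2})^2\geq 2n+c^2-c+\tfrac{1}{4}$, i.e.\ the claimed inequality. The cases $n=3,4$ then follow by substitution, using $c\geq 1$ (which holds as the center of a nontrivial $\ell$-group is nontrivial). The main obstacle is Step 2: while the elementary abelian reduction via matrices is immediate, extending the bound on $|P|$ to arbitrary abelian $\ell$-groups requires careful bookkeeping of how $P$ acts on the successive Loewy layers of the characteristic filtration, and a direct appeal to the known formula for the $\ell$-part of $|\aut(A)|$ (e.g.\ Hillar--Rhea) would streamline this step.
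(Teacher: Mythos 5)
The paper does not actually prove this proposition: its ``proof'' is a one-line citation of Section 4 of Burnside's 1913 article, together with the remark that for the only cases it uses ($n=3,4$) the conclusion $\nu\geq n-1$ also follows from standard elementary facts about $\ell$-groups. Your proposal reconstructs Burnside's own argument, and its skeleton is sound: a maximal normal abelian subgroup $A$ of an $\ell$-group is self-centralizing, hence $Z(G)\subseteq A$ and $G/A$ embeds into an $\ell$-subgroup of $\aut(A)$ fixing $Z(G)$ pointwise; the bound $\ell^{(\nu-c)(\nu+c-1)/2}=\ell^{\binom{\nu}{2}-\binom{c}{2}}$ on such a subgroup then gives exactly the stated inequality, and your closing algebra and the specializations to $n=3,4$ (using $c\geq 1$ and the integrality of $\nu$) are correct. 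The one step that does not close as written is the counting bound for a general abelian $A$: working layer by layer on $A\supseteq \ell A\supseteq\cdots$ and ``adding up the exponents'' is insufficient on its own, because an automorphism can act trivially on every Loewy layer without being trivial (multiplication by $1+\ell$ on $\Z/\ell^2\Z$ already does this), so the kernel of the map to $\prod_i\aut(\ell^iA/\ell^{i+1}A)$ must itself be estimated. The standard repair is to choose a $P$-invariant composition series of $A$ refining $0\subseteq Z(G)\subseteq A$ (available since $P$ is an $\ell$-group acting on an $\ell$-group) and to count the automorphisms stabilizing the series, acting trivially on each of the $\nu$ quotients and fixing the bottom $c$ terms pointwise; this yields the exponent $\binom{\nu}{2}-\binom{c}{2}$ directly, as does an appeal to the known formula for $\#\aut(A)$. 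Finally, note that for the paper's needs one can bypass the general statement entirely: for $n=3$ the preimage of any order-$\ell$ subgroup of $G/Z(G)$ is normal and abelian of order $\ell^2$, and for $n=4$ the self-centralizing property alone rules out $\nu\leq 2$, since the $\ell$-part of $\aut(A)$ is $1$ when $\#A=\ell$ and at most $\ell$ when $\#A=\ell^2$, which is too small to accommodate $G/A$.
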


\begin{proof}
This result is shown in \cite[Section 4]{Burnside1913}.
\end{proof}

We shall use the above proposition for $n=3,4$. For these values of $n$ the result actually follows quite easily using standard arguments of the theory of $\ell$-groups. From now on we will use the notation $C(n)$ to denote a cyclic group of order $n$, where $n$ is a positive integer.

\begin{lemma}\label{espl}
Let $G$ be a group of exponent $\ell$ and order $\ell^n$, where $n=3,4$. Then $G$ is very good and in particular
\[\rt(k,G)=W(k,\ell)^{\frac{\ell-1}{2}\ell^{n-1}}.\]
\end{lemma}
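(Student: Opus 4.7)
The plan is to exploit the exponent hypothesis to collapse $\W(k,G)$ to a single cyclotomic $W$-group, then realise $G$ as a semidirect product so that Theorem~\ref{constructiveinclusion} supplies the matching lower bound on $\rt(k,G)$. First I would note that since $G$ has exponent $\ell$, every $\tau\in G^*$ has order $o(\tau)=\ell$; Lemma~\ref{Elgruppi} then forces $e_\tau=\ell$ (as $\ell\mid e_\tau\mid o(\tau)=\ell$) and hence $E_{k,G,\tau}=k(\zeta_\ell)$ uniformly in $\tau$. Plugging this into Definition~\ref{WkG} collapses the product to
\[\W(k,G)=W(k,\ell)^{\frac{\ell-1}{2}\ell^{n-1}},\]
and Theorem~\ref{secondinclusion} immediately yields $\rt(k,G)\subseteq\W(k,G)$.

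Next I would produce the semidirect decomposition. By Proposition~\ref{norabemax} (applied with $n=3,4$), $G$ contains a normal abelian subgroup $H$ of order $\ell^{n-1}$. Since $G$ has exponent $\ell$, any $g\in G\setminus H$ generates a subgroup $\langle g\rangle$ of prime order $\ell$ that meets $H$ trivially (otherwise $\langle g\rangle\subseteq H$, forcing $g\in H$), and a simple order count gives $G=H\rtimes_\mu\langle g\rangle$ for the conjugation action $\mu\colon\langle g\rangle\cong C(\ell)\to\aut(H)$. Both $H$ and $C(\ell)$ are abelian of odd order, hence very good by Theorem~\ref{A'ok}; in particular $\rt(k,C(\ell))=\W(k,C(\ell))=W(k,\ell)^{(\ell-1)/2}$.

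Feeding this decomposition into Theorem~\ref{constructiveinclusion}, the term $\rt(k,C(\ell))^{\#H}$ contributes $W(k,\ell)^{\frac{\ell-1}{2}\ell^{n-1}}$, and each factor $W(k,E_{k,G,\tau})^{\frac{\ell-1}{2}\#G/o(\tau)}$ indexed by $\tau\in H^*$ also equals $W(k,\ell)^{\frac{\ell-1}{2}\ell^{n-1}}$ by the first paragraph, so their joint span is exactly $\W(k,G)$. This yields the reverse inclusion $\rt(k,G)\supseteq\W(k,G)$, while the ``more precisely'' clause of the same theorem delivers, for every prescribed $s$ and target class, a tame $G$-Galois extension with that Steinitz class satisfying $P(s)$; thus both conditions of Definition~\ref{verygood} hold. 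The only delicate step is the group-theoretic one, and it is precisely the exponent hypothesis that guarantees the normal abelian subgroup of Proposition~\ref{norabemax} admits a complement of order $\ell$; relaxing it would require genuinely different ideas, which is why the next lemmas in the section presumably have to handle exponents $\ell^2, \ell^3, \ldots$ separately.
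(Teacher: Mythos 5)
Your proposal is correct and follows essentially the same route as the paper: compute $E_{k,G,\tau}=k(\zeta_\ell)$ for all $\tau\in G^*$ via Lemma~\ref{Elgruppi} to get $\W(k,G)=W(k,\ell)^{\frac{\ell-1}{2}\ell^{n-1}}$, use Proposition~\ref{norabemax} together with the exponent-$\ell$ hypothesis to split $G$ as $H\rtimes C(\ell)$ with $H$ normal abelian of order $\ell^{n-1}$, and then combine Theorems~\ref{secondinclusion} and~\ref{constructiveinclusion} (with $C(\ell)$ very good by Theorem~\ref{A'ok}) for the two inclusions and the $P(s)$ condition. Your explicit verification that any $g\in G\setminus H$ yields a complement is just an unpacking of the paper's one-line remark that the sequence splits because the exponent is $\ell$.
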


\begin{proof}
By Proposition \ref{norabemax}, there is an exact sequence
\[1\to C(\ell)^{n-1}\to G\to C(\ell)\to 1,\]
which splits, since the exponent of $G$ is $\ell$.

Using Lemma \ref{Elgruppi}, we get $\W(k,G)=W(k,\ell)^{\frac{\ell-1}{2}\ell^{n-1}}$ and the proof of the equality $\rt(k,G)=\W(k,G)$ is straightforward, using Theorem \ref{secondinclusion} and Theorem \ref{constructiveinclusion} (which we can apply with $\G=C(\ell)$ and $H=C(\ell)^{n-1}$, since $C(\ell)$ is very good by Theorem \ref{A'ok}).

Since in particular every $x\in\rt(k,G)$ belongs to the right-hand side of the inclusion of Theorem \ref{constructiveinclusion}, it follows by Theorem \ref{constructiveinclusion} that $G$ satisfies the second condition of Definition \ref{verygood}.
\end{proof}

The above result for $n=3$ is actually proved in \cite[Corollaire 1.2]{Bruche}. The following result slightly improves \cite[Theorem 3.8]{Cobbe3}.

\begin{proposition}\label{gruppiacta}
Let $G$ be a nonabelian group of order $\ell^n$ and exponent $\ell^{n-1}$, where $n>2$ is an integer. Then $G$ is very good and
\[\rt(k,G)=W(k,\ell^{n-2})^{\frac{\ell-1}{2}\ell}.\]
\end{proposition}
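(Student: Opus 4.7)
The plan is to proceed in three steps: establish the semidirect structure of $G$, compute $\W(k, G)$ explicitly, and then appeal to Theorems \ref{secondinclusion} and \ref{constructiveinclusion} to pin down $\rt(k, G)$.

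First, I would show that $G = H \rtimes_\mu \langle \rho \rangle$, where $H = \langle \sigma \rangle \cong C(\ell^{n-1})$ is the normal cyclic subgroup generated by any element of maximal order (hence of index $\ell$, normal), $\langle \rho \rangle \cong C(\ell)$, and up to isomorphism $\mu(\rho)(\sigma) = \sigma^{1 + \ell^{n-2}}$. The splitting of the extension $1 \to H \to G \to C(\ell) \to 1$ follows from the identity $1 + \alpha + \cdots + \alpha^{\ell-1} \equiv \ell \pmod{\ell^{n-1}}$ (valid whenever $\alpha \equiv 1 + c\ell^{n-2} \pmod{\ell^{n-1}}$): writing $\rho^\ell = \sigma^b$ for any preimage $\rho$ of a generator of $G/H$, the case $\ell \nmid b$ would force $\rho$ to have order $\ell^n$, contradicting the exponent hypothesis.

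Next I would compute $\W(k, G)$ and show it equals $W(k, \ell^{n-2})^{\frac{\ell-1}{2}\ell}$. A direct check gives $Z(G) = \langle \sigma^\ell \rangle$ of order $\ell^{n-2}$, whence $G/Z(G) \cong C(\ell) \times C(\ell)$. Since $C(\ell) \times C(\ell)$ admits no subgroups strictly between a non-trivial cyclic subgroup and the whole group, the centralizer of any non-central $\tau$ equals $\langle \tau \rangle Z(G)$ and has index exactly $\ell$ in $G$; this forces $[N_G(\langle \tau \rangle) : C_G(\tau)] \in \{1, \ell\}$. By Lemma \ref{Elgruppi}, writing $o(\tau) = \ell^m$, we obtain $E_{k, G, \tau} = k(\zeta_{\ell^a})$ with $a \in \{m-1, m\}$, so $a \ge m - 1$ uniformly in $\tau$. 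Using the second form of Definition \ref{WkG}, the $\tau$-contribution is $W(k, \ell^a)^{\frac{\ell-1}{2}\ell^{n-m}}$, and Lemma \ref{2.6acta} rewrites this as
\[W(k, \ell^a)^{\frac{\ell-1}{2}\ell^{n-m}} = \bigl(W(k, \ell^a)^{\ell^{n-2-a}}\bigr)^{\frac{\ell-1}{2}\ell^{a-m+2}} \subseteq W(k, \ell^{n-2})^{\frac{\ell-1}{2}\ell^{a-m+2}} \subseteq W(k, \ell^{n-2})^{\frac{\ell-1}{2}\ell},\]
the last inclusion because $a - m + 2 \ge 1$. Taking $\tau = \sigma$ itself (with $m = n-1$, $a = n-2$) gives the reverse containment, so $\W(k, G) = W(k, \ell^{n-2})^{\frac{\ell-1}{2}\ell}$.

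For the last step, Theorem \ref{constructiveinclusion} applied with $\G = \langle \rho \rangle$ (very good by Theorem \ref{A'ok}) and $H = \langle \sigma \rangle$ gives $\rt(k, G) \supseteq W(k, E_{k, G, \sigma})^{\frac{\ell-1}{2}\frac{\#G}{o(\sigma)}} = W(k, \ell^{n-2})^{\frac{\ell-1}{2}\ell}$, together with the $P(s)$-guarantee for every $s$. Combined with Theorem \ref{secondinclusion} this yields $\rt(k, G) = \W(k, G) = W(k, \ell^{n-2})^{\frac{\ell-1}{2}\ell}$, and the $P(s)$-guarantee shows $G$ is very good. The hard part will be the group-theoretic Step 2: identifying $Z(G)$ and showing that every non-central centralizer has index exactly $\ell$ (not $\ell^2$), which is what secures the uniform bound $a \ge m - 1$ and then drives the absorption of all non-maximal contributions.
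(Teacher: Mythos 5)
Your proposal is correct and follows essentially the same route as the paper: establish the standard presentation $G\cong C(\ell^{n-1})\rtimes C(\ell)$, show every non-central centralizer has index $\ell$ so that $\#(N_G(\tau)/C_G(\tau))$ divides $\ell$ and Lemma \ref{Elgruppi} gives $E_{k,G,\tau}=k(\zeta_{\ell^a})$ with $a\ge m-1$, absorb every contribution into $W(k,\ell^{n-2})^{\frac{\ell-1}{2}\ell}$ via Lemmas \ref{2.6acta} and \ref{Wrovescia}, and conclude with Theorems \ref{secondinclusion}, \ref{constructiveinclusion} and \ref{A'ok}. The one point to make explicit is that your rewriting with inner exponent $\ell^{n-2-a}$ needs $a\le n-2$; this does hold, because for $\tau$ of maximal order $\ell^{n-1}$ your own identity $C_G(\tau)=\langle\tau\rangle Z(G)$ forces $C_G(\tau)=\langle\tau\rangle$ (an index-$\ell$ abelian subgroup of the nonabelian $G$), so $\#(N_G(\tau)/C_G(\tau))=\ell$ and $a=n-2$ in that case, while $a\le m\le n-2$ for all other elements.
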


\begin{proof}
As it is well known (and easily verified), $G$ can be written as
\[G=\langle\sigma,\tau:\sigma^\ell=\tau^{\ell^{n-1}}=1,\ \sigma\tau\sigma^{-1}=\tau^{1+\ell^{n-2}}\rangle.\]
Clearly $\langle\tau^\ell\rangle$ equals the center of $G$, which has therefore order $\ell^{n-2}$. Let $\rho=\tau^a\sigma^b\in G$, with $\sigma^b\neq 1$ (resp. $\sigma^b=1$), then $\tau^a\sigma^b$ (resp. $\tau$) is an element of the centralizer of $\tau^a\sigma^b$ which is not in the center of $G$. Thus, for any element $\rho\in G$, the order of $C_G(\rho)$ is strictly larger than that of the center of $G$ and hence $\#C_G(\tau)$ is a multiple of $\ell^{n-1}$. Therefore $\# (N_G(\tau)/C_G(\tau))$ divides $\ell$ and $\frac{o(\rho)}{\ell}|e_\rho$, i.e. $k(\zeta_{o(\rho)/\ell})\subseteq E_{k,G,\rho}$, by Lemma \ref{Elgruppi}.

Hence, using Lemma \ref{Wrovescia} and Lemma \ref{2.6acta}, if $\ell^2|o(\rho)$, then
\[W(k,E_{k,G,\rho})^{\frac{\ell-1}{2}\frac{\ell^n}{o(\rho)}}\subseteq W(k,o(\rho)/\ell)^{\frac{\ell-1}{2}\frac{\ell^{n}}{o(\rho)}}\subseteq W(k,\ell^{n-2})^{\frac{\ell-1}{2}\ell}.\]
If $\rho\neq 1$ and $\ell^2\nmid o(\rho)$, i.e. $o(\rho)=\ell$, then by Lemma \ref{Elgruppi} we must have $E_{k,G,\rho}=k(\zeta_\ell)$. Hence, again using Lemma \ref{Wrovescia} and Lemma \ref{2.6acta},
\[W(k,E_{k,G,\rho})^{\frac{\ell-1}{2}\frac{\ell^n}{o(\rho)}}=W(k,\ell)^{\frac{\ell-1}{2}\ell^{n-1}}\subseteq W(k,\ell^{n-1})^{\frac{\ell-1}{2}\ell}\subseteq W(k,\ell^{n-2})^{\frac{\ell-1}{2}\ell}.\]
Further, we have $N_G(\tau)=G$ and $C_G(\tau)=\langle\tau\rangle$ and therefore, by Lemma \ref{Elgruppi},
\[W(k,E_{k,G,\tau})^{\frac{\ell-1}{2}\frac{\ell^n}{o(\tau)}}=W(k,\ell^{n-2})^{\frac{\ell-1}{2}\ell}.\]
Therefore
\[\W(k,G)=\prod_{\rho\in G^*}W(k,E_{k,G,\rho})^{\frac{\ell-1}{2}\frac{\ell^n}{o(\rho)}}=W(k,E_{k,G,\tau})^{\frac{\ell-1}{2}\ell}.\]
We easily conclude by Theorems \ref{secondinclusion}, \ref{constructiveinclusion} and \ref{A'ok}, as in the proof of Lemma \ref{espl}.
\end{proof}

\begin{lemma}\label{qualiWkl}
Let $G$ be a nonabelian group of order $\ell^4$ and exponent $\ell^2$. Let $\tau\in G$ be of order $\ell^2$ and such that $\# (N_G(\tau)/C_G(\tau))$ is maximal. Then $\W(k,G)=W(k,E_{k,G,\tau})^{\frac{\ell-1}{2}\ell^2}$.
\end{lemma}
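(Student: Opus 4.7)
The plan is to prove the two inclusions separately. For $\supseteq$, observe that since $o(\tau)=\ell^2$, the factor indexed by $\tau$ in the product defining $\W(k,G)$ is precisely $W(k,E_{k,G,\tau})^{\frac{\ell-1}{2}\frac{\ell^4}{\ell^2}}=W(k,E_{k,G,\tau})^{\frac{\ell-1}{2}\ell^2}$, so this direction is immediate.

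For the inclusion $\subseteq$, since $G$ has exponent $\ell^2$, every $\rho\in G^*$ satisfies $o(\rho)\in\{\ell,\ell^2\}$. I would show that each factor $W(k,E_{k,G,\rho})^{\frac{\ell-1}{2}\frac{\ell^4}{o(\rho)}}$ is contained in $W(k,E_{k,G,\tau})^{\frac{\ell-1}{2}\ell^2}$, by splitting on $o(\rho)$ and using Lemma \ref{Elgruppi} to describe $E_{k,G,\rho}$ in each case.

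When $o(\rho)=\ell^2$, Lemma \ref{Elgruppi} gives $e_\rho=\ell^2/\#(N_G(\rho)/C_G(\rho))$ with $\ell\mid e_\rho$, so $e_\rho\in\{\ell,\ell^2\}$. By the maximality of $\#(N_G(\tau)/C_G(\tau))$ among elements of order $\ell^2$, we have $e_\tau\leq e_\rho$, and both being powers of $\ell$ yields $e_\tau\mid e_\rho$, hence $E_{k,G,\tau}=k(\zeta_{e_\tau})\subseteq k(\zeta_{e_\rho})=E_{k,G,\rho}$. Lemma \ref{Wrovescia} then gives $W(k,E_{k,G,\rho})\subseteq W(k,E_{k,G,\tau})$, and raising to the exponent $\frac{\ell-1}{2}\ell^2$ yields the desired containment.

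When $o(\rho)=\ell$, the $\ell$-Sylow of $(\Z/\ell\Z)^\times$ is trivial, so by Lemma \ref{Elgruppi} we must have $\#(N_G(\rho)/C_G(\rho))=1$, hence $e_\rho=\ell$ and $E_{k,G,\rho}=k(\zeta_\ell)$; the corresponding factor is $W(k,\ell)^{\frac{\ell-1}{2}\ell^3}$. If $E_{k,G,\tau}=k(\zeta_\ell)$ (i.e.\ $e_\tau=\ell$), the inclusion $W(k,\ell)^{\frac{\ell-1}{2}\ell^3}\subseteq W(k,\ell)^{\frac{\ell-1}{2}\ell^2}$ is trivial. If instead $E_{k,G,\tau}=k(\zeta_{\ell^2})$, Lemma \ref{2.6acta} applied with $m=n=\ell$ yields $W(k,\ell)^\ell\subseteq W(k,\ell^2)$, and raising to the $\frac{\ell-1}{2}\ell^2$ power gives $W(k,\ell)^{\frac{\ell-1}{2}\ell^3}\subseteq W(k,\ell^2)^{\frac{\ell-1}{2}\ell^2}=W(k,E_{k,G,\tau})^{\frac{\ell-1}{2}\ell^2}$. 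I do not foresee any serious obstacle, as the entire argument reduces to the monotonicity of $W(k,\cdot)$ combined with the explicit description of $E_{k,G,\rho}$ from Lemma \ref{Elgruppi} and the single bridging step provided by Lemma \ref{2.6acta}.
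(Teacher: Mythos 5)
Your proposal is correct and follows essentially the same route as the paper: both directions reduce to the monotonicity of $W(k,\cdot)$ (Lemma \ref{Wrovescia}), the explicit description $E_{k,G,\rho}=k(\zeta_{e_\rho})$ from Lemma \ref{Elgruppi} together with the maximality hypothesis for the order-$\ell^2$ elements, and Lemma \ref{2.6acta} to absorb the order-$\ell$ factors. The only cosmetic difference is that you split the order-$\ell$ case according to $e_\tau=\ell$ or $\ell^2$, whereas the paper handles both at once via $W(k,\ell)^{\frac{\ell-1}{2}\ell^3}\subseteq W(k,\ell^2)^{\frac{\ell-1}{2}\ell^2}\subseteq W(k,E_{k,G,\tau})^{\frac{\ell-1}{2}\ell^2}$.
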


\begin{proof}
For any $\sigma\in G$ of order $\ell$, using Lemma \ref{Elgruppi}, Lemma \ref{2.6acta} and Lemma \ref{Wrovescia}, we have
\[W(k,E_{k,G,\sigma})^{\frac{\ell-1}{2}\frac{\ell^4}{o(\sigma)}}=W(k,\ell)^{\frac{\ell-1}{2}\ell^3}\subseteq W(k,\ell^2)^{\frac{\ell-1}{2}\ell^2}\subseteq W(k,E_{k,G,\tau})^{\frac{\ell-1}{2}\ell^2}.\]
Further for any $\sigma\in G$ of order $\ell^2$ we have $\# (N_G(\sigma)/C_G(\sigma))\leq\# (N_G(\tau)/C_G(\tau))$, i.e., by Lemma \ref{Elgruppi}, $E_{k,G,\sigma}\supseteq E_{k,G,\tau}$, and hence, by Lemma \ref{Wrovescia},
\[W(k,E_{k,G,\sigma})\subseteq W(k,E_{k,G,\tau}).\]
Therefore
\[\W(k,G)=\prod_{\rho\in G^*}W(k,E_{k,G,\rho})^{\frac{\ell-1}{2}\frac{\ell^4}{o(\rho)}}=W(k,E_{k,G,\tau})^{\frac{\ell-1}{2}\ell^2}.\]
\end{proof}

\begin{lemma}\label{newlemma}
Let $G$ be a (nonabelian) group of order $\ell^4$ and exponent $\ell^2$. Then there exists a normal abelian subgroup $H$ of order $\ell^3$ and an element $\tau\in G$ of order $\ell^2$ maximizing $\#(N_G(\tau)/C_G(\tau))$, such that we have one of the following possibilities:
\begin{enumerate}
\item $\tau\in H$ and $G\cong H\rtimes C(\ell)$.
\item $G\cong \langle\tau\rangle\rtimes C(\ell^2)$.
\item $\tau\not\in H$ and there exists $\sigma_1\in N_G(\tau)\setminus\langle\tau\rangle$ of order $\ell$, which acts trivially (resp. nontrivially) on $\tau$ if $N_G(\tau)=C_G(\tau)$ (resp. if $N_G(\tau)\ne C_G(\tau)$).
\end{enumerate}
\end{lemma}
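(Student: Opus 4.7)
The plan is to first produce the required ingredients via Proposition \ref{norabemax}, and then to split into three cases matching the three conclusions of the statement. Applying Proposition \ref{norabemax} with $n=4$ yields a normal abelian subgroup of order $\ell^\nu$ with $\nu \geq 3$; since $G$ is nonabelian we must have $\nu = 3$, so $G$ contains a normal abelian subgroup of order $\ell^3$. Since $G$ has exponent exactly $\ell^2$, there exists an element of order $\ell^2$, and for any such $\tau$ the index $\#(N_G(\tau)/C_G(\tau))$ injects into $\aut(\langle\tau\rangle)\cong(\Z/\ell^2\Z)^\times$, which has order $\ell(\ell-1)$; being a power of $\ell$, this index equals $1$ or $\ell$. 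Fix a maximizer $\tau$, with the understanding that it may be replaced by another maximizer if needed.

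Next I would try to land in each of the three conclusions in turn. For case (1), I try to pick the normal abelian $H$ of order $\ell^3$ so as to contain $\tau$; the presence of an element of order $\ell^2$ and the exponent hypothesis force $H\cong C(\ell^2)\times C(\ell)$. I then look for an element of order $\ell$ in $G\setminus H$: if one exists it generates a complement and we are in case (1). If no such element exists, every element of $G\setminus H$ has order $\ell^2$; any such $\sigma$ satisfying $\sigma^\ell\notin\langle\tau\rangle$ and normalizing $\langle\tau\rangle$ yields a cyclic complement $\langle\sigma\rangle\cong C(\ell^2)$ to $\langle\tau\rangle$ (possibly after replacing $\tau$ with another maximizer obtained by swapping roles), giving case (2). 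Finally, if no maximizer lies in any normal abelian subgroup of order $\ell^3$, then for any such $H$ one has $\tau\notin H$; order counts give $G=\langle\tau\rangle H$ and $\langle\tau\rangle\cap H=\langle\tau^\ell\rangle$, and in this regime I exhibit $\sigma_1\in N_G(\tau)\setminus\langle\tau\rangle$ of order $\ell$, with its action on $\tau$ forced by whether $N_G(\tau)=C_G(\tau)$ or not, giving case (3).

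The main obstacle will be the last step, producing the order-$\ell$ element $\sigma_1$. The point is that if $N_G(\tau)\setminus\langle\tau\rangle$ consisted only of elements of order $\ell^2$, then one such element would typically furnish the cyclic complement needed for case (2), or else a repackaging of the subgroup lattice would produce a different normal abelian $H'$ of order $\ell^3$ containing $\tau$ with a splitting, forcing case (1); ruling these possibilities out therefore pins down the desired $\sigma_1$. Concretely, since the nonabelian groups of order $\ell^4$ and exponent $\ell^2$ form a very short list, this reduces to an explicit structural check on each isomorphism type, using the exponent hypothesis, the lower bound $|N_G(\tau)|\geq \ell^3$ coming from $N_G(\tau)\supseteq C_G(\tau)\supseteq Z(G)\cdot\langle\tau\rangle$ (the center of the $\ell$-group $G$ being nontrivial), and the description of the conjugation action of $N_G(\tau)/C_G(\tau)$ on $\langle\tau\rangle$ provided by $\varphi_\tau$.
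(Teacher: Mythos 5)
Your setup is fine: Burnside (Proposition \ref{norabemax}) gives a normal abelian subgroup of order $\ell^3$, and the observation that $\#(N_G(\tau)/C_G(\tau))$ embeds in the $\ell$-part of $(\Z/\ell^2\Z)^\times$, hence equals $1$ or $\ell$, is exactly the reduction the paper makes (via Lemma \ref{Elgruppi}). But there is a genuine gap in two places. First, your trichotomy is not exhaustive as stated: the scenario in which a maximizer $\tau$ \emph{does} lie in a normal abelian $H$ of order $\ell^3$, the extension $1\to H\to G\to C(\ell)\to 1$ does \emph{not} split, and no cyclic $C(\ell^2)$-complement to $\langle\tau\rangle$ exists, falls outside all three of your premises (your case (c) assumes no maximizer lies in any such $H$). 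This is precisely the situation the paper must handle to reach conclusion 3 with trivial action: there one observes that $\#(N_G(\rho)/C_G(\rho))=1$ forces, by maximality, the same for \emph{every} element of order $\ell^2$, so one may legitimately switch to a different maximizer $\tau\notin H$ (necessarily of order $\ell^2$ by non-splitting) and extract $\sigma_1$ of order $\ell$ from an abelian subgroup of order $\ell^3$ inside $C_G(\tau)$. Your proposal never makes this change of maximizer, and without it conclusion 3 cannot be reached from your case division.

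Second, the production of $\sigma_1$ in your case (c) --- which you yourself flag as the main obstacle --- is not actually carried out. Phrases like ``would typically furnish the cyclic complement'' and ``a repackaging of the subgroup lattice would produce a different normal abelian $H'$'' are not arguments, and deferring to ``an explicit structural check on each isomorphism type'' without performing it leaves the lemma unproved (and that list is not as short as you suggest: for $\ell>3$ there are fifteen groups of order $\ell^4$, several of exponent $\ell^2$). The paper avoids any classification: when $C_G(\tau)=\langle\tau\rangle$ it notes that $N_G(\tau)$ has order exactly $\ell^3$ (it strictly contains $\langle\tau\rangle$ and cannot be all of $G$ since the index divides $\ell$), is nonabelian of exponent $\ell^2$, hence is the modular group $C(\ell^2)\rtimes C(\ell)$ and contains an element of order $\ell$ outside $\langle\tau\rangle$ acting nontrivially; and in the non-split case with $\#(N_G(\sigma)/C_G(\sigma))=\ell$ it shows directly that any $\rho\notin H=C_G(\sigma)$ generates $G$ together with $\sigma$, yielding conclusion 2. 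You would need to supply arguments of this kind (or genuinely complete the isomorphism-type check) for the proof to stand. You should also verify, in your case (b), that $\langle\tau\rangle$ is normal in $G$, which is needed for $G\cong\langle\tau\rangle\rtimes C(\ell^2)$ and is not automatic.
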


\begin{proof}
Let $\sigma\in G$ be an element of order $\ell^2$, which maximizes $\#(N_G(\sigma)/C_G(\sigma))$; since, by Lemma \ref{Elgruppi}, $\ell$ divides \[e_\sigma=\frac{o(\sigma)}{\#(N_G(\sigma)/C_G(\sigma))}=\frac{\ell^2}{\#(N_G(\sigma)/C_G(\sigma))},\]
we have that $\#(N_G(\sigma)/C_G(\sigma))$ divides $\ell$. We will study separately the cases $\#C_G(\sigma)\geq \ell^3$ and $\#C_G(\sigma)=\ell^2$.

\begin{itemize}
\item If $\#C_G(\sigma)\geq \ell^3$, then we set $H$ to be a subgroup of order $\ell^3$ of $C_G(\sigma)$ containing $\sigma$. Clearly $H$ is abelian and normal, being of index $\ell$, and we have an exact sequence
\begin{equation}1\to H\to G\to C(\ell)\to 1.\label{succlemma}\end{equation}
There are three cases, corresponding to the three possibilities in the statement of the lemma.

\begin{enumerate}
\item Sequence (\ref{succlemma}) splits. Then we set $\tau=\sigma\in H$ and we have $G\cong H\rtimes C(\ell)$. 

\item Sequence (\ref{succlemma}) does not split and $\#(N_G(\sigma)/C_G(\sigma))=\ell$. In this case $N_G(\sigma)=G$, $C_G(\sigma)=H$ and we consider an element $\rho\in G\setminus H$. If $\#\langle\sigma,\rho\rangle=\ell^3$, then $\langle\sigma,\rho\rangle\cong C(\ell^2)\rtimes C(\ell)$ and there would exist an element of order $\ell$ in $N_G(\sigma)\setminus C_G(\sigma)=G\setminus H$, contradicting the assumption that (\ref{succlemma}) does not split. Therefore $\langle\sigma,\rho\rangle=G$ and, setting $\tau=\sigma$, we get $G=\langle\tau\rangle\rtimes\langle\rho\rangle$.

\item Sequence (\ref{succlemma}) does not split  and $\#(N_G(\sigma)/C_G(\sigma))=1$. In this case, by the assumption of maximality, an analogous condition holds for every element of order $\ell^2$. So we can take $\tau$ to be any element not in $H$, since it must be of order $\ell^2$ because of the nonsplitting of (\ref{succlemma}). Further let $\tilde H$ be a subgroup of order $\ell^3$ of $N_G(\tau)=C_G(\tau)$ (which has order at least $\ell^3$, since $\tau$ is of order $\ell^2$) and containing $\tau$. Clearly $\tilde H$ is abelian and we can find an element $\sigma_1$ of order $\ell$ in $\tilde H\setminus\langle\tau\rangle\subseteq N_G(\tau)\setminus\langle\tau\rangle$. In particular $\sigma_1$ acts trivially on $\tau$.
\end{enumerate}

\item If $\# C_G(\sigma)=\ell^2$, i.e. $C_G(\sigma)=\langle\sigma\rangle$, then $\# N_G(\sigma)=\ell^3$, since it must be at least $\ell^3$, being strictly larger than $\langle\sigma\rangle$, and $N_G(\sigma)$ cannot be $G$, since $\#(N_G(\sigma)/C_G(\sigma))$ divides $\ell$. By Proposition \ref{norabemax} there exists an abelian subgroup $H<G$ of order $\ell^3$. Clearly $\sigma\not\in H$, since $\# C_G(\sigma)=\ell^2$. Further $N_G(\sigma)$ is nonabelian of order $\ell^3$ and exponent $\ell^2$, so there exists $\sigma_1\in N_G(\sigma)\setminus\langle\sigma\rangle$ of order $\ell$, acting nontrivially on $\sigma$. Hence we are again in case 3, taking $\tau=\sigma$.
\end{itemize}
\end{proof}

\begin{lemma}\label{type12}
Let $G$ be of the first or second type described in Lemma \ref{newlemma}. Then $G$ is very good and
\[\rt(k,G)=W(k,E_{k,G,\tau})^{\frac{\ell-1}{2}\ell^2},\]
where $\tau\in G$ is as in Lemma \ref{newlemma}.
\end{lemma}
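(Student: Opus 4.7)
The plan is to combine Theorem \ref{secondinclusion} with Theorem \ref{constructiveinclusion}, invoking the semidirect product structure provided by Lemma \ref{newlemma}. By Lemma \ref{qualiWkl} we already have $\W(k,G) = W(k,E_{k,G,\tau})^{\frac{\ell-1}{2}\ell^2}$, and Theorem \ref{secondinclusion} gives $\rt(k,G) \subseteq \W(k,G)$. So the task is to prove the reverse inclusion $\rt(k,G) \supseteq W(k,E_{k,G,\tau})^{\frac{\ell-1}{2}\ell^2}$.

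In both cases (1) and (2) of Lemma \ref{newlemma}, $G$ is exhibited as a semidirect product $G = H \rtimes_\mu \G$ with $H$ abelian of odd order and $\G$ cyclic (of order $\ell$ in type 1, of order $\ell^2$ in type 2), hence abelian and therefore very good by Theorem \ref{A'ok}. So Theorem \ref{constructiveinclusion} applies and yields
\[\rt(k,G) \supseteq \rt(k,\G)^{\#H} \cdot \prod_{\tau' \in H^*} W(k,E_{k,G,\tau'})^{\frac{\ell-1}{2}\frac{\ell^4}{o(\tau')}}.\]
The decisive feature of types 1 and 2 — as opposed to type 3 — is that $\tau$ itself lies in $H$. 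Consequently the product on the right-hand side contains the single factor corresponding to $\tau' = \tau$, namely $W(k,E_{k,G,\tau})^{\frac{\ell-1}{2}\ell^2}$, which by Lemma \ref{qualiWkl} already equals $\W(k,G)$. This gives the reverse inclusion and thus the equality $\rt(k,G) = \W(k,G)$.

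The second condition in Definition \ref{verygood} comes for free from the ``more precisely'' assertion of Theorem \ref{constructiveinclusion}: for any positive integer $s$, every class of $\rt(k,G) = \W(k,G)$ lies in the right-hand side of the inclusion above, and can therefore be realized as the Steinitz class of a tame $G$-Galois extension satisfying $P(s)$. I do not anticipate any serious obstacle: the argument is almost mechanical once Lemma \ref{newlemma} is available. The only point deserving attention is that $\tau$ must belong to the abelian kernel $H$ of the chosen semidirect product decomposition, which is exactly what differentiates types 1 and 2 from type 3, and which is what makes the key factor $W(k,E_{k,G,\tau})^{\frac{\ell-1}{2}\ell^2}$ appear as a single term in the product of Theorem \ref{constructiveinclusion}. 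Type 3, where $\tau \notin H$, will require the more delicate construction developed separately in Lemma \ref{type3}.
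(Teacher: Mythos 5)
Your proposal is correct and follows essentially the same route as the paper: Lemma \ref{qualiWkl} identifies $\W(k,G)=W(k,E_{k,G,\tau})^{\frac{\ell-1}{2}\ell^2}$, Theorem \ref{constructiveinclusion} applied to the semidirect product decompositions of types 1 and 2 (with $\G=C(\ell)$ or $C(\ell^2)$, very good by Theorem \ref{A'ok}) yields the constructive inclusion via the factor indexed by $\tau\in H$, and Theorem \ref{secondinclusion} gives the reverse inclusion, with the second condition of Definition \ref{verygood} following from the refined statement of Theorem \ref{constructiveinclusion}. Your observation that the decisive point is $\tau\in H$, distinguishing these cases from type 3, matches the paper's organization exactly.
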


\begin{proof}
By Lemma \ref{qualiWkl}, we have
$\W(k,G)=W(k,E_{k,G,\tau})^{\frac{\ell-1}{2}\ell^2}$.
By Theorem \ref{constructiveinclusion}, with $\G=C(\ell)$ or $C(\ell^2)$ (note that both are very good groups by Theorem \ref{A'ok}), we obtain
\[\W(k,G)=W(k,E_{k,G,\tau})^{\frac{\ell-1}{2}\ell^2}\subseteq \rt(k,G).\]
The opposite inclusion is given by Theorem \ref{secondinclusion}. The second condition of the definition of very good groups is immediate, as in the proof of Lemma \ref{espl}.
\end{proof}

\begin{lemma}\label{type3}
Let $G$ be of the third type described in Lemma \ref{newlemma}. Then $G$ is very good and
\[\rt(k,G)=W(k,E_{k,G,\tau})^{\frac{\ell-1}{2}\ell^2},\]
where $\tau\in G$ is as in Lemma \ref{newlemma}.
\end{lemma}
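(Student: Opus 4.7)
By Lemma \ref{qualiWkl} we have $\W(k,G)=W(k,E_{k,G,\tau})^{\frac{\ell-1}{2}\ell^2}$, and Theorem \ref{secondinclusion} gives the inclusion $\rt(k,G)\subseteq\W(k,G)$. The real content of the lemma is thus the reverse inclusion together with the second condition of Definition \ref{verygood}: given $x\in W(k,E_{k,G,\tau})$ and a positive integer $s$, I must construct a tame $G$-Galois extension $K/k$ with Steinitz class $x^{\frac{\ell-1}{2}\ell^2}$ and satisfying $P(s)$.

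As stressed in the introduction, a direct application of Theorem \ref{constructiveinclusion} to the decomposition $G=H\rtimes C(\ell)$ coming from Lemma \ref{newlemma} fails to produce such an extension. Indeed, the inertia generators at primes ramifying in the $H$-Galois step $K/k_1$ produced by Lemma \ref{cobbia311} are forced to lie in $H$, whereas the element $\tau$ (whose cyclotomic field $E_{k,G,\tau}$ is minimal among the $E_{k,G,\rho}$ by the proof of Lemma \ref{qualiWkl}) does not. Only the strictly larger fields $E_{k,G,\tau'}$ with $\tau'\in H$ would then appear, and so the target $W(k,E_{k,G,\tau})^{\frac{\ell-1}{2}\ell^2}$ cannot be reached in this way.

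My plan is to build $K/k$ as a subextension of a larger Galois extension $\tilde K/k$ whose Galois group $\tilde G$ surjects onto $G$ via some $\pi\colon \tilde G\to G$ and in which a lift $\tilde\tau$ of $\tau$ lies inside an abelian normal subgroup $\tilde H\triangleleft \tilde G$, with $\tilde G\cong\tilde H\rtimes\tilde\G$ and $\tilde\G=\tilde G/\tilde H$ very good of $\ell$-power order. The shape of $\tilde G$ will depend on the sub-case of case (3) of Lemma \ref{newlemma}: if $N_G(\tau)=C_G(\tau)$ one can hope to work with the abelian subgroup $\langle\tau,\sigma_1\rangle$ of $G$ itself, while if $N_G(\tau)\ne C_G(\tau)$ a genuine enlargement is required so that a lift of $\tau$ commutes with a lift of $\sigma_1$ inside $\tilde H$. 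In both situations one must verify that $\ker\pi\subseteq\tilde H$ and that the induced conjugation action of $\tilde\G$ on $\tilde H$ is compatible, via $\pi$, with the action of $G/H$ on $H$, so that taking the fixed field of $\ker\pi$ inside any $\tilde G$-extension genuinely recovers a $G$-extension.

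Once $\tilde G$ has been constructed, I would apply Theorem \ref{constructiveinclusion} to it, producing for a sufficiently large multiple $s'$ of $s$ and $\#\tilde H$ a tame $\tilde G$-Galois extension $\tilde K/k$ satisfying $P(s')$, with $\tilde\tau$ serving as inertia generator at a prescribed set of primes whose classes are powers of $x$. Setting $K=\tilde K^{\ker\pi}$ gives a tame $G$-Galois extension still satisfying $P(s)$; ramification transfers naturally under $\pi$, so the inertia groups of $K/k$ at the selected primes are exactly $\langle\tau\rangle$. A routine computation via the conductor-discriminant formula and Theorem 2.1 of \cite{Cobbe1} — choosing the number and classes of ramified primes so that exponents match — then confirms $\st(K/k)=x^{\frac{\ell-1}{2}\ell^2}$. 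The hard step is the construction of the covering group $\tilde G$ with the required semidirect-product decomposition in each sub-case; the subsequent bookkeeping of Steinitz classes and of $P(s)$ through the quotient by $\ker\pi$ is, by comparison, straightforward, and the second condition of Definition \ref{verygood} will then be a direct consequence of the construction.
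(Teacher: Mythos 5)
Your high-level strategy --- pass to a larger group $\tilde G$ surjecting onto $G$, build a tame $\tilde G$-Galois extension, and descend to the fixed field of $\ker\pi$ --- is exactly the method of the paper, and your diagnosis of why the naive two-step approach fails is also correct. But the proposal stops at the point where the proof actually begins: the construction of $\tilde G$ and of $\pi$, which you yourself flag as ``the hard step,'' is the entire content of the argument, and the blueprint you sketch for it cannot work as stated. You require an abelian \emph{normal} subgroup $\tilde H\triangleleft\tilde G$ containing a lift $\tilde\tau$ of $\tau$, with $\ker\pi\subseteq\tilde H$. Then $\pi(\tilde H)$ would be a normal abelian subgroup of $G$ containing $\tau$, hence contained in $C_G(\tau)$. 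In the sub-case of case (3) where $C_G(\tau)=\langle\tau\rangle$ (which does occur, cf.\ the last bullet of the proof of Lemma \ref{newlemma}), this forces $\pi(\tilde H)=\langle\tau\rangle$ to be normal in $G$, i.e.\ $N_G(\tau)=G$ and $\#(N_G(\tau)/C_G(\tau))=\ell^2$, contradicting the fact that this index divides $\ell$. The paper avoids this by \emph{not} placing $\tilde\tau$ in a normal subgroup of $\tilde G$: it takes $\tilde G=H\rtimes_\mu\tilde\G$ with $\tilde\G=\langle\tilde\tau\rangle\rtimes\langle\tilde\sigma_1\rangle$ an abstract copy of $\langle\sigma_1,\tau\rangle$, so that $\tilde\tau$ generates a normal subgroup only of the quotient $\tilde\G$, and builds a three-step tower $k\subset\tilde k_0\subset\tilde k_1\subset\tilde K$, applying Lemma \ref{cobbia311} twice (to $\tilde k_1/\tilde k_0$ with inertia $\langle\tilde\tau\rangle$ at two primes in the class $x^B$, and to $\tilde K/\tilde k_1$ with inertia of order $\ell$ inside $H$ at $A$ primes in the class $x$, with $\ell A+2(\ell+1)B\equiv 1\pmod{o(x)}$).

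Two further steps are glossed over. First, Theorem \ref{constructiveinclusion} only produces an extension with a prescribed Steinitz class and $P(s)$; it says nothing about which primes ramify with which inertia groups, so it cannot deliver ``$\tilde\tau$ serving as inertia generator at a prescribed set of primes'' --- that requires the finer Lemma \ref{cobbia311}. Second, and more seriously, ``ramification transfers naturally under $\pi$'' is precisely where the difficulty sits: the inertia group of a prime of $\tilde K$ above $\p_1$ or $\p_2$ is generated by $h_i\tilde\tau$ for some \emph{uncontrolled} $h_i\in H$, and under a fixed surjection its image in $G$ may have order $\ell$ instead of $\ell^2$, which destroys the discriminant computation. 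The paper resolves this by constructing a family of surjections $\pi_j:\tilde G\to G$ (with $\pi_j(\tilde\tau)=h_0^{j}\tau$ and $\pi_j(\tilde\sigma)=\sigma^{1-j}$), verifying the relations, and showing by a counting argument that some $j$ makes both $\pi_j(h_1\tilde\tau)$ and $\pi_j(h_2\tilde\tau)$ of order $\ell^2$ while keeping $\ker(\pi_j)\cap H$ trivial. None of this is present in, or recoverable from, the proposal as written.
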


\begin{proof}
By hypothesis there exists a normal abelian subgroup $H$ of $G$ of order $\ell^3$ and an element $\tau\in G\setminus H$ of order $\ell^2$ maximizing $\#(N_G(\tau)/C_G(\tau))$. We can also find an element $\sigma_1\in N_G(\tau)\setminus\langle\tau\rangle$ of order $\ell$ and acting trivially on $\tau$ if and only if $N_G(\tau)=C_G(\tau)$.
Let $\G$ be the subgroup of $G$ generated by $\sigma_1$ and $\tau$, which must be of the form $C(\ell^2)\rtimes C(\ell)$, possibly with trivial action. Clearly there exists an integer $a$ such that $\sigma_1\tau\sigma_1^{-1}=\tau^{1+a\ell}$ (since the order of the conjugation divides $\ell$). Since $\tau\not\in H$, we have that $\tau H$ generates $G/H$; in particular, $\sigma_1=\sigma\tau^b$, for some $\sigma\in H^*$ (not necessarily of order $\ell$) and $b\in\N$. We also have  \[\sigma\tau\sigma^{-1}=\tau^{1+a\ell}.\]
For later use, we note that, since $\G$ has order $\ell^3$ and exponent $\ell^2$, there must exist $c\in\{0,\,1,\,\ldots,\,\ell-1\}$ such that $\sigma^{\ell}=\tau^{c\ell}$ (an easy calculation shows that $c=-b$, but we won't need it), so that we have
\[\G=\langle \sigma,\, \tau\,:\, \tau^{\ell^2}=\sigma^\ell\tau^{-c\ell}=1, \sigma\tau\sigma^{-1}=\tau^{1+a\ell}\rangle.\]
We now consider the (abstract) group $\tilde\G$, generated by $\tilde\sigma$ and $\tilde\tau$, satisfying the same relations as $\sigma$ and $\tau$ in $\G$. We also consider the element $\tilde\sigma_1\in\tilde\G$, corresponding to $\sigma_1\in\G$.

We define an action $\mu:\tilde\G\to\aut(H)$ by setting, for any $h\in H$,
\[\begin{cases}
\mu(\tilde \tau)(h)=\tau h\tau^{-1}\\
\mu(\tilde \sigma)(h)=\sigma h\sigma^{-1}=h.
\end{cases}\]
We consider the semidirect product
\[\tilde G=H\rtimes_\mu \tilde\G.\]
For every couple $h_1,h_2$ of elements of $H$, we want to define a surjective homomorphism $\pi:\tilde G\to G$, such that $\pi(h_1\tilde\tau)$ and $\pi(h_2\tilde\tau)$ are both of order $\ell^2$ and $\ker(\pi)\cap H$ is trivial. There are two cases:
\begin{enumerate}
\item For every $h\in H$, $h\tau$ is of order $\ell^2$.

In this case we define $\pi:\tilde G\to G$ by removing tildes. Then $\pi$ is clearly a surjective homomorphism, $\pi(h\tilde\tau)=h\tau$ has order $\ell^2$ for any $h\in H$ and $\ker(\pi)$ has trivial intersection with $H$.

\item There exists an element $h_0\in H$ such that $h_0\tau$ is of order $\ell$. 

Note that in this case, if for example $h_1=h_0$, we cannot define $\pi$ simply by removing tildes because $\pi(h_0\tilde\tau)=h_0\tau$ would have order $\ell$. Then we proceed as follows: by assumption,
\[(h_0\tau)^\ell=	\left(\prod_{i=0}^{\ell-1}\tau^{i} h_0\tau^{-i}\right)\tau^\ell=1,\quad\textrm{i.e.}\quad\prod_{i=0}^{\ell-1}
(\tau^{i} h_0\tau^{-i})=\tau^{-\ell}.\]
It follows that, for any $j\in \Z$,
\[(h_0^j\tau)^\ell=\left(\prod_{i=1}^{\ell-1}\tau^{i} h_0^j\tau^{-i}\right)\tau^\ell=\left(\prod_{i=1}^{\ell-1}\tau^{i} h_0\tau^{-i}\right)^j\tau^\ell=\tau^{\ell(1-j)}.\]

Now, for any $j\in \Z$, we want to define a homomorphism $\pi_j:\tilde G \to G$ by setting
\[
\begin{cases}
\pi_j(\tilde \tau)=h_0^{j}\tau\\
\pi_j(\tilde \sigma)=\sigma^{1-j}\\
\pi_j(h)=h,\ \forall h\in H.
\end{cases}
\]
We have to verify that the definition is compatible with the relations of $\tilde G$: 
\[\begin{split}
\pi_j(\tilde \sigma)\pi_j(\tilde \tau)\pi_j(\tilde \sigma^{-1})&=\sigma^{1-j}h_0^j\tau\sigma^{-(1-j)}=h_0^j\sigma^{1-j}\tau\sigma^{-(1-j)}\\
&=h_0^j\tau^{1+a\ell(1-j)}=h_0^j\tau(h_0^j\tau)^{a\ell}=(h_0^j\tau)^{1+a\ell}\\
&=\pi_j(\tilde \tau^{1+a\ell});\\
\pi_j(\tilde \tau)\pi_j(h)\pi_j(\tilde \tau^{-1})&=h_0^j\tau h\tau^{-1}h_0^{-j}=\tau h\tau^{-1}\\&=\pi_j(\tau h\tau^{-1});\\
\pi_j(\tilde \sigma)\pi_j(h)\pi_j(\tilde \sigma^{-1})&=\sigma^{1-j}h\sigma^{-(1-j)}=h\\&=\pi_j(h);\\
\pi_j(\tilde\sigma)^\ell&=\sigma^{(1-j)\ell}=\tau^{c\ell(1-j)}=(h_0^j\tau)^{c\ell}\\&=\pi_j(\tilde\tau)^{c\ell};\\
\pi_j(\tilde\tau)^{\ell^2}&=(h_0^j\tau)^{\ell^2}=\tau^{\ell^2(1-j)}=1\\&=\pi_j(1).
\end{split}\]

So, for any $j\in \Z$, $\pi_j:\tilde G\to G$ is indeed a homomorphism, which is clearly surjective and whose kernel has trivial intersection with $H$.
Now let $h_1,h_2$ be two elements in $H$. We want to choose $j$ such that $\pi_j(h_1\tilde\tau)$ and $\pi_j(h_2\tilde\tau)$ are both of order $\ell^2$. For $i=1,2$, we can find $h_i'\in H$ such that $(h_i\tilde\tau)^\ell=h_i'\tilde\tau^\ell$. Then, for $i=1,2$,
\[\pi_j(h_i'\tilde\tau^\ell)=h_i' (h_0^j\tau)^\ell=h_i' \tau^{\ell(1-j)}.\]
Since $h_i'\tau^{\ell(1-j_1)}\neq h_i'\tau^{\ell(1-j_2)}$ if $j_1\not\equiv j_2\pmod\ell$, we have at least $\ell-1\geq 2$ choices for $j$ such that $\pi_j(h_1'\tilde\tau^\ell)\neq 1$. Among those choices, at least $\ell-2\geq 1$ give also $\pi_j(h_2'\tilde\tau^\ell)\neq 1$.
\end{enumerate}

Hence, in both cases, for every couple $h_1,h_2$ of elements of $H$, we have defined a projection $\pi:\tilde G\to G$, such that $\pi(h_1\tilde\tau)$ and $\pi(h_2\tilde\tau)$ are of order $\ell^2$ and $\ker(\pi)$ has trivial intersection with $H$. With this in mind, we start constructing the extensions needed to prove the lemma.

Let $k$ be a number field, let $x\in W(k,E_{k,G,\tau})$ and let $s$ be a positive multiple of $\ell^3$ (this hypothesis will be used later to apply Lemma \ref{cobbia311}). Since $\ell$ is an odd prime, there exist integers $A,B>1$ such that
\[\ell A+2(\ell+1)B\equiv1 \pmod{o(x)}\]
and, in particular, we have
\[x^{\ell A+2(\ell+1)B}=x.\]

By Lemma \ref{cobbia310} there exists a tame $C(\ell)$-Galois extension $\tilde k_0/k$ ramified only at principal ideals and satisfying $P(s)$. 
Since $\tilde\G=\langle\tilde\tau\rangle\rtimes\langle\tilde\sigma_1\rangle$, possibly with trivial action, and thanks to our assumption on $\sigma_1$, we have $x\in W(k,E_{k,G,\tau})=W(k,E_{k,\tilde\G,\tilde\tau})$. So we can apply Lemma \ref{cobbia311} (with $x_{\tilde\tau}=x$, $x_{\tilde\rho}=1$ for every $\tilde\rho\in\langle\tilde\tau\rangle^*\setminus\{\tilde\tau\}$, $A_{\tilde\tau}=0$ and $B_{\tilde\tau}=B$) and construct a tame $C(\ell^2)$-Galois extension $\tilde k_1/\tilde k_0$, such that $\tilde k_1/k$ is $\tilde\G$-Galois and it satisfies the following conditions.
\begin{itemize}
\item The only nonprincipal prime ideals of $k$ ramifying in $\tilde k_1/\tilde k_0$ are $\p_1,\p_2$ in the class of $x^B$ and the inertia group of every prime of $\tilde k_1$ dividing these $\p_i$ is $\langle\tilde \tau\rangle$ in $\tilde\G=\mathrm{Gal}(\tilde k_1/k)$ ($\langle\tilde \tau\rangle$ is invariant by conjugation, since it is normal in $\tilde \G$);
\item $\tilde k_1/\tilde k_0$ satisfies $P(s)$. 
\end{itemize} 
Now we want to construct an $H$-Galois extension $\tilde K/\tilde k_1$. By Lemma \ref{Wrovescia} and Lemma \ref{Elgruppi}, we have $x\in W(k,E_{k,G,\tau})\subseteq W(k,\ell)=W(k,E_{k,\tilde G,\rho})$ for every $\rho\in H$ of order $\ell$. Hence again we can use Lemma \ref{cobbia311} (with $x_{\rho_0}=x$, for a certain $\rho_0\in H^*$ of order $\ell$, $x_\rho=1$ for every $\rho\in H^*\setminus\{\rho_0\}$, $A_{\rho_0}=A$ and $B_{\rho_0}=0$) to find a tame $H$-Galois extension $\tilde K/\tilde k_1$, such that $\tilde K/k$ is $\tilde G$-Galois and satisfies the following conditions.
\begin{itemize} 
\item The only nonprincipal prime ideals of $k$ ramifying in $\tilde K/\tilde k_1$ are $\p_3,\dots,\p_{A+2}$ in the class of $x$ and the inertia group of every prime of $\tilde K$ dividing these $\p_i$ is generated by a conjugate of $\rho_0$ in $\tilde G=\mathrm{Gal}(\tilde K/k)$.
\item $\tilde K/\tilde k_1$ satisfies $P(s)$ (in particular it follows that $\{\p_1,\,\p_2\}\cap\{\p_3,\dots,\p_{A+2}\}=\varnothing$).
\end{itemize}

Let us consider two prime ideals $\tilde\P_1$ and $\tilde\P_2$ of $\tilde K$ dividing $\p_1$ and $\p_2$ respectively. Their inertia groups in the extension $\tilde K/k$ must be generated by elements of the form $h_i\tilde\tau$ for some $h_i\in H$, $i=1,2$. We consider a projection $\pi:\tilde G\to G$ for which $\pi(h_1\tilde\tau)$ and $\pi(h_2\tilde\tau)$ are both of order $\ell^2$ and such that $\ker(\pi)$ has trivial intersection with $H$. We call $K$ the fixed field of $\ker(\pi)$. By construction the ramification index of $\p_1$ and $\p_2$ in $K/k$ is $\ell^2$. Further the inertia group of the primes of $\tilde K$ dividing $\p_3,\dots,\p_{A+2}$ lies in $H$, which has trivial intersection with $\ker(\pi)$. Therefore $\p_3,\dots,\p_{A+2}$ ramify in $K/k$, with ramification index $\ell$ and all the other ramifying primes are principal. Hence the discriminant of $K/k$ is
\[d(K/k)=(\p_1\p_2)^{(\ell^2-1)\ell^2}\prod_{i=3}^{A+2}\p_i^{(\ell-1)\ell^3}I^2,\]
where $I$ is a principal ideal. Hence, by \cite[Theorem 2.1]{Cobbe1},  the Steinitz class is
\[\st(K/k)=x^{B(\ell^2-1)\ell^2+A\frac{\ell-1}{2}\ell^3}=x^{\frac{\ell-1}{2}\ell^2(2(\ell+1)B+\ell A)}=x^{\frac{\ell-1}{2}\ell^2}.\]
Hence, recalling also Lemma \ref{qualiWkl}, we have
\[\W(k,G)=W(k,E_{k,G,\tau})^{\frac{\ell-1}{2}\ell^2}\subseteq\rt(k,G),\]
We conclude, by Theorem \ref{secondinclusion}, that 
\[\rt(k,G)=\W(k,G).\]
As in the proof of Lemma \ref{espl}, the second condition of Definition \ref{verygood} holds, since the extensions we have constructed all satisfy $P(s')$ for every $s'$ dividing $s$.
\end{proof}

\begin{theorem}
Let $\ell$ be an odd prime. Then every group of order dividing $\ell^4$ is very good. In particular
\[\rt(k,G)=\W(k,G).\]
\end{theorem}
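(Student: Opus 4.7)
The plan is a case analysis on $|G|$ and, when $|G|=\ell^3$ or $\ell^4$, on the exponent of $G$, appealing to the results of this section together with Theorem \ref{A'ok}.

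If $|G|$ divides $\ell^2$, then $G$ is abelian, so $G$ is an $A'$-group and thus very good by Theorem \ref{A'ok}. This also handles the trivial group $|G|=1$.

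Suppose $|G|=\ell^3$. If $G$ is abelian we are done as above. Otherwise the exponent of $G$ is either $\ell$ or $\ell^2$. In the first case we apply Lemma \ref{espl} with $n=3$, and in the second case we apply Proposition \ref{gruppiacta} with $n=3$; in both cases $G$ is very good.

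Suppose $|G|=\ell^4$. If $G$ is abelian, apply Theorem \ref{A'ok}. Otherwise the exponent of $G$ is $\ell$, $\ell^2$, or $\ell^3$. If the exponent is $\ell$, Lemma \ref{espl} with $n=4$ applies. If the exponent is $\ell^3$, Proposition \ref{gruppiacta} with $n=4$ applies. The remaining case is exponent $\ell^2$: here Lemma \ref{newlemma} provides a trichotomy into three structural types for $G$, and these three types are precisely handled by Lemma \ref{type12} (first and second types) and Lemma \ref{type3} (third type). In each case $G$ is very good, and since $\rt(k,G)=\W(k,G)$ is the first condition of Definition \ref{verygood}, the stated equality follows.

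The only step requiring anything beyond invoking previous results is verifying that the classification of Lemma \ref{newlemma} does exhaust all nonabelian groups of order $\ell^4$ and exponent $\ell^2$, but this was already carried out in the proof of that lemma using Proposition \ref{norabemax}. No further work is needed.
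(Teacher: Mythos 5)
Your proof is correct and follows essentially the same route as the paper's: an exhaustive case analysis on the order and exponent of $G$, dispatching each case to Theorem \ref{A'ok}, Lemma \ref{espl}, Proposition \ref{gruppiacta}, or the trio Lemma \ref{newlemma}, Lemma \ref{type12}, Lemma \ref{type3}. Your version is merely a bit more explicit about checking that the cases are exhaustive (e.g.\ noting that groups of order dividing $\ell^2$ are abelian), which is fine.
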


\begin{proof}
If $G$ is abelian, the theorem is a consequence of Theorem \ref{A'ok}. So suppose that $G$ is nonabelian. If $G$ is of exponent $\ell$, then the theorem follows from Lemma \ref{espl}. If $G$ is of order $\ell^n$ and exponent $\ell^{n-1}$, for $n=3,4$, then the result is given by Proposition \ref{gruppiacta}. We are therefore left with the case where $G$ has order $\ell^4$ and exponent $\ell^2$ and we conclude by Lemma \ref{newlemma}, Lemma \ref{type12} and Lemma \ref{type3}.
\end{proof}

\end{section}

\begin{section}*{Acknowledgements}
We wish to thank Maurizio Monge for sharing with us some of his insights in the theory of $\ell$-groups. Further we wish to thank the institutions which are supporting our research, i.e. the Universit\'e de Limoges and the Scuola Normale Superiore of Pisa.
\end{section}

\nocite{PARI2}

\begin{tabularx}{\textwidth}{XX}
   Alessandro Cobbe & Luca Caputo\\
   Scuola Normale Superiore & Facult\'e de Sciences et Techniques\\
   piazza Cavalieri, 7 & 123 Avenue Albert Thomas\\
   56126 Pisa & 87060 Limoges\\
   Italy & France\\
   a.cobbe@sns.it & luca.caputo@unilim.fr
\end{tabularx}

\end{document}